\newtheorem{theorem}{Theorem}[section] % 1st argument is your name for it
\newtheorem{lemma}[theorem]{Lemma}     % 2nd argument is what is printed
\newtheorem{corollary}[theorem]{Corollary}
\theoremstyle{definition}
\newtheorem{definition}[theorem]{Definition}
\newtheorem{remark}[theorem]{Remark}
\title{$\Delta$--Springer varieties and Hall--Littlewood polynomials}
\author{Sean T. Griffin}
\address{Department of Mathematics, University of California Davis, Davis, CA, USA}
\email{stgriffin@ucdavis.edu}
\date{\today}
\newcommand{\st}{\,|\,}
\newcommand{\vspan}{\mathrm{span}}
\newcommand{\Hilb}{\mathrm{Hilb}}
\newcommand{\Frob}{\mathrm{Frob}}
\newcommand{\Frobq}[1]{\mathrm{Frob}(#1 ;q)}
\newcommand{\la}{\lambda}
\newcommand{\sort}{\mathrm{sort}}
\newcommand{\fl}{\mathrm{f\hspace{0.25pt}l}}
\newcommand{\bx}{\mathbf{x}}
\newcommand{\Fq}{\mathbb{F}_q}
\newcommand{\rev}{\mathrm{rev}}
\newcommand{\Comp}{\mathrm{Comp}}
\newcommand{\Par}{\mathrm{Par}}
\renewcommand{\Im}{\mathrm{Im}}
\newcommand{\bF}{\mathbb{F}}
\newcommand{\Stein}{\mathrm{Stein}}
\newcommand{\Spalt}{\mathrm{Spalt}}
\DeclareMathOperator{\inv}{\mathrm{inv}}
\DeclareMathOperator{\dinv}{\mathrm{dinv}}
\DeclareMathOperator{\PRD}{PRD}
\DeclareMathOperator{\coinv}{coinv}
\newcommand{\qbinom}{\genfrac{[}{]}{0pt}{}}
\begin{document}
\maketitle

\begin{abstract}
  The $\Delta$-Springer varieties are a generalization of Springer fibers introduced by Levinson, Woo, and the author that have connections to the Delta Conjecture from algebraic combinatorics. We prove a positive Hall--Littlewood expansion formula for the graded Frobenius characteristic of the cohomology ring of a $\Delta$-Springer variety. We do this by interpreting the Frobenius characteristic in terms of counting points over a finite field $\mathbb{F}_q$ and partitioning the $\Delta$-Springer variety into copies of Springer fibers crossed with affine spaces. As a special case, our proof method gives a geometric meaning to a formula of Haglund, Rhoades, and Shimozono for the Hall--Littlewood expansion of the symmetric function in the Delta Conjecture at $t=0$.
\end{abstract}

%%%%%%%%%%%%%%%%%%%%%%%%%%%%
\section{Introduction}\label{sec:Introduction}
%%%%%%%%%%%%%%%%%%%%%%%%%%%%

In~\cite{GLW}, Levinson, Woo, and the author introduced the $\Delta$-Springer varieties $Y_{n,\la,s}$  as a generalization of Springer fibers that give a compact geometric realization of the Delta Conjecture at $t=0$. Precisely, they showed that the symmetric function $\Delta'_{e_{k-1}}e_n|_{t=0}$ corresponds under the graded Frobenius characteristic map to the cohomology ring of a certain $\Delta$-Springer variety, up to a minor twist. In this article, we give a proof of the expansion, originally announced in~\cite{GriffinOSP}, of the graded Frobenius characteristic of the cohomology ring of a $\Delta$-Springer variety as a positive sum of modified Hall--Littlewood symmetric functions. As a special case, our proof gives a geometric explanation of the Hall--Littlewood expansion for $\Delta'_{e_{k-1}}e_n|_{t=0}$ given by Haglund, Rhoades, and Shimozono~\cite{HRS1}. 

\subsection{Background and context}

Springer fibers $\cB_\lambda$ are a family of subvarieties of the complete flag variety indexed by partitions $\lambda$ that have remarkable connections to representation theory. Notably, Springer~\cite{Springer-TrigSum,Springer-WeylGrpReps} constructed an action of the symmetric group $S_n$ on the cohomology ring of a Springer fiber and showed that the top nonzero cohomology group is an irreducible representation of $S_n$, and that all finite-dimensional irreducible representations of $S_n$ appear this way. Furthermore, the graded $S_n$-module structure of the cohomology ring is well studied. Under the Frobenius characteristic map that send an $S_n$-module to a symmetric function, the cohomology ring of a Springer fiber corresponds\footnote{Here, and throughout the paper, we define the grading on cohomology so that $H^{2k}$ is in degree $k$. This is possible since the odd degree cohomology groups of all of the varieties mentioned in this paper are $0$.} to the Hall--Littlewood symmetric function~\cite{Hotta-Springer},
\[
\widetilde H_\lambda(\bx;q) = \Frob(H^*(\mathcal{B}_\lambda;\bQ);q).
\]

Alternatively, the modified Hall--Littlewood symmetric functions can be obtained by counting $\Fq$-points. Letting $\Stein_\la^\mu$ be the Steinberg variety of partial flags of type $\mu$ that are preserved by a fixed nilpotent matrix of Jordan type $\lambda$, it is well known that for all prime powers $q$,
\begin{align}\label{eq:Springer-point-count}
\widetilde H_\lambda(\bx;q) = \sum_{\mu\vdash n} |\Stein_\la^\mu(\bF_q)|m_\mu(\bx),
\end{align}
where $m_\mu$ is the monomial symmetric function and $\Stein_\la^\mu(\bF_q)$ stands for the set of $\bF_q$ points of $\Stein_\la^\mu$.

Similar interpretations have been given for the modified Macdonald polynomials $\widetilde H_\lambda(\bx;q,t)$, which are a generalization of modified Hall-Littlewood symmetric functions with coefficients in two parameters $q$ and $t$. Haiman proved that $\widetilde H_\lambda(\bx;q,t)$ is the bi-graded Frobenius characteristic of the fiber of a certain tautological bundle on the Hilbert scheme of points in the plane~\cite{Haiman01,Haiman02}. On the other hand, Mellit realized $\widetilde H_\lambda(\bx;q,t)$ as a weighted count of points in an affine Springer fiber~\cite{Mellit}.

In a related line of work, the Delta Conjecture gives two combinatorial formulas for a symmetric function $\Delta'_{e_{k-1}}e_n$, where $\Delta'_{e_{k-1}}$ is a certain eigenoperator that diagonalizes the modified Macdonald polynomial basis. There has been recent progress on realizing this symmetric function naturally as the Frobenius characteristic of an $S_n$-module. Haglund, Rhoades, and Shimozono~\cite{HRS1} found a graded ring $R_{n,k}$ whose graded Frobenius characteristic is $\rev_q\circ \omega(\Delta'_{e_{k-1}}e_n|_{t=0})$, where $\omega$ is the involution that swaps elementary symmetric functions $e_n$ with complete homogeneous symmetric functions $h_n$, and $\rev_q$ reverses the coefficients as a polynomial in $q$. Pawlowski and Rhoades subsequently defined the space of spanning line configurations, which is a a smooth noncompact variety whose cohomology is $R_{n,k}$.

In~\cite{GriffinOSP}, the author generalized the ring $R_{n,k}$ to a family of quotient rings $R_{n,\lambda,s}$ that also include Tanisaki's presentation of the cohomology ring of a Springer fiber as a special case. Each ring $R_{n,\la,s}$ has the structure of a graded $S_n$-module whose graded Frobenius characteristic has several combinatorial formulas that extend previously-known formulas for $\Delta_{e_{k-1}}'e_n|_{t=0}$ and $\widetilde H_\la(\bx;q)$. In particular, there are two monomial symmetric function expansions in terms of \emph{inversions} and \emph{diagonal inversions}, respectively, of labeled diagrams called \emph{partial row-decreasing fillings}. We recall the relevant notation and statistics in Section~\ref{sec:Background}.
\begin{theorem}[\cite{GriffinOSP}]\label{thm:RInvFormula}
  We have
  \[
    \Frobq{R_{n,\la,s}} = \sum_{\mu\vdash n}\sum_{\varphi\in \PRD^\mu_{n,\la,s}} q^{\inv(\varphi)} m_\mu(\bx) = \sum_{\mu\vdash n}\sum_{\varphi\in\PRD^\mu_{n,\la,s}} q^{\dinv(\varphi)} m_\mu(\bx).
  \]
\end{theorem}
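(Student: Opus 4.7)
The plan is to prove both equalities by first expressing the $m_\mu$-coefficients of $\Frobq{R_{n,\la,s}}$ as graded dimensions of the $S_\mu$-invariant subspaces, and then producing an explicit basis of those invariants indexed by partial row-decreasing fillings of content $\mu$. By Frobenius reciprocity together with the fact that $\Frob$ of the trivial representation induced from $S_\mu = S_{\mu_1}\times\cdots\times S_{\mu_\ell}$ to $S_n$ equals $h_\mu$, one has
\[
\Frobq{R_{n,\la,s}} = \sum_{\mu\vdash n}\dim_q\!\bigl(R_{n,\la,s}^{S_\mu}\bigr)\, m_\mu(\bx),
\]
so it suffices to show $\dim_q R_{n,\la,s}^{S_\mu} = \sum_{\varphi\in\PRD^\mu_{n,\la,s}} q^{\inv(\varphi)}$, and separately $= \sum_{\varphi\in\PRD^\mu_{n,\la,s}} q^{\dinv(\varphi)}$.

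For the $\inv$-formula, I would first recall the explicit presentation of $R_{n,\la,s}$ as a quotient of $\mathbb{Q}[x_1,\ldots,x_n]$ by an $S_n$-stable ideal, pick the lexicographic term order with $x_1>\cdots>x_n$, and compute a Gr\"obner basis. Generalizing the Haglund--Rhoades--Shimozono analysis of $R_{n,k}$, I expect the standard monomials to be Garsia--Stanton-type monomials in bijection with a set of ``unrestricted'' labeled diagrams, whose total degree matches a natural inversion count. To extract the $S_\mu$-invariants, one then symmetrizes each standard monomial over its $S_\mu$-orbit and argues that the resulting symmetrized elements form a basis of $R_{n,\la,s}^{S_\mu}$. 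The canonical orbit representatives should turn out to be precisely the elements of $\PRD^\mu_{n,\la,s}$, with the row-decreasing condition arising as the ``sort into weakly decreasing order along each row'' choice of representative, and the $x$-degree matching $\inv(\varphi)$.

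For the $\dinv$-formula, I would establish the purely combinatorial identity
\[
\sum_{\varphi\in\PRD^\mu_{n,\la,s}} q^{\inv(\varphi)} \;=\; \sum_{\varphi\in\PRD^\mu_{n,\la,s}} q^{\dinv(\varphi)}
\]
by constructing an explicit bijection on $\PRD^\mu_{n,\la,s}$ that interchanges $\inv$ and $\dinv$, modeled on Haglund's $\zeta$-map or Carlsson--Mellit-style arguments for parking functions. I expect the main obstacle to be exactly this bijection: $\inv$ and $\dinv$ are defined quite differently on decorated diagrams and equidistribution typically requires a delicate recursive construction, the more so because the shape data $(n,\la,s)$ interpolates between the Springer and Delta Conjecture settings and the bijection must respect both degenerations. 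A secondary obstacle is the Gr\"obner basis analysis, since the defining relations of $R_{n,\la,s}$ in this generality are substantially more intricate than those of $R_{n,k}$, and identifying the initial ideal and the resulting standard monomials requires care.
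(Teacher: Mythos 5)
Note first that Theorem~\ref{thm:RInvFormula} is quoted from \cite{GriffinOSP}; the present paper cites it as an input (it is the starting point for the $\Fq$-point-counting argument in Sections 3--5) rather than proving it, so there is no internal proof to compare your proposal against. That said, your outline --- reduce to $\dim_q R_{n,\la,s}^{S_\mu}$ by Frobenius reciprocity, obtain the $\inv$ formula from a Gr\"obner-basis/standard-monomial analysis, and then prove $\inv\leftrightarrow\dinv$ equidistribution by a separate combinatorial bijection --- is the right general template for results of this type and matches the style of Haglund--Rhoades--Shimozono.

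The step that contains a genuine gap is the symmetrization claim. A Gr\"obner standard-monomial basis is not stable under the $S_\mu$-action: the lex term order $x_1>\cdots>x_n$ breaks the symmetry, so acting by $\sigma\in S_\mu$ on a standard monomial $x^\alpha$ produces $x^{\sigma\cdot\alpha}$, which is typically \emph{not} a standard monomial, and its reduced form modulo the defining ideal is a nontrivial linear combination of standard monomials. The ``orbit sums'' you describe are therefore not literal orbit sums of basis elements, and proving they are both linearly independent and spanning in $R_{n,\la,s}^{S_\mu}$ --- indexed by exactly the row-decreasing representatives with degree equal to $\inv$ --- requires a triangularity or filtration argument that your sketch does not supply; it does not come for free from a choice of orbit representative. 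In this line of work one usually sidesteps constructing an explicit basis of $S_\mu$-invariants: one computes the $m_\mu$-coefficient via an exact sequence, an expansion of $\Frob(R;q)$ in the $h$-basis, or a point-counting/orbit-harmonics argument (the latter is essentially what the present paper does geometrically for $H^*(Y_{n,\la,s})$, though taking the $\dinv$ formula as given). Your identification of the $\inv\leftrightarrow\dinv$ bijection and the Gr\"obner computation as the main obstacles is accurate, but the symmetrization step should be added to that list.
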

Levinson, Woo, and the author~\cite{GLW} then constructed a compact variety $Y_{n,\la,s}$, the $\Delta$-Springer variety, whose cohomology ring is $R_{n,\la,s}$. In the special case of $\lambda = (1^k)$ and $s=k$, the variety $Y_{n,(1^k),k}$ gives a compact geometric realization of the Delta Conjecture symmetric function at $t=0$, since $H^*(Y_{n,(1^k),k}) \cong R_{n,k}$. It was also shown that $Y_{n,\la,s}$ has many of the same geometric and representation-theoretic properties as Springer fibers, including a characterization of the $S_n$-module structure of the top cohomology group.

\begin{theorem}[\cite{GLW}]
  The variety $Y_{n,\la,s}$ is equidimensional of complex dimension
  \[
    d = n(\la) + (s-1)(n-k),
  \]
  where $n(\la) = \sum_i \binom{\la_i'}{2}$.
  For $s>\ell(\la)$, we have an isomorphism of $S_n$-modules
  \[
    H^{2d}(Y_{n,\la,s};\bQ) \cong \mathrm{Ind}\!\uparrow_{S_k\times S_{n-k}}^{S_n}(S^\la),
  \]
  where $S^\la$ is considered as a $S_k\times S_{n-k}$-module in which $S_{n-k}$ acts trivially.
\end{theorem}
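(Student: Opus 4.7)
The plan is to build an affine paving of $Y_{n,\la,s}$ whose cells are indexed by the partial row-decreasing fillings in $\PRD_{n,\la,s}$, with the dimension of the cell attached to a filling $\varphi$ equal to $\inv(\varphi)$; both assertions then follow from this paving together with Theorem~\ref{thm:RInvFormula}.

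First I would set up the paving by following Spaltenstein's treatment of ordinary Springer fibers. One chooses a generic one-parameter subgroup of a torus acting on $Y_{n,\la,s}$ and passes to the Bia\l ynicki--Birula decomposition. The key combinatorial step is to exhibit an explicit bijection between the torus-fixed points and the elements of $\PRD_{n,\la,s}$, and to verify that the attracting cell at the fixed point corresponding to $\varphi$ is an affine space of dimension $\inv(\varphi)$. Concretely, a fixed point should be a coordinate partial flag stabilized by the ambient nilpotent of Jordan type $\lambda$, and $\varphi$ records which basis vector enters which subspace of the flag.

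From such a paving, the complex dimension of $Y_{n,\la,s}$ is $\max_\varphi \inv(\varphi)$, which a direct combinatorial maximization identifies with $n(\la) + (s-1)(n-k)$: the term $n(\la)$ is the Springer-fiber contribution of the type-$\lambda$ part, while $(s-1)(n-k)$ comes from the freedom in how the extra $n-k$ letters are distributed over the $s$ flag steps. Equidimensionality, however, is not immediate from the paving; for this I would either construct a proper, generically finite map onto $Y_{n,\la,s}$ from a smooth equidimensional variety of dimension $d$ in the spirit of the Springer resolution, or else run Spaltenstein's inductive argument by fibering $Y_{n,\la,s}$ over a Grassmannian of subspaces preserved by the nilpotent and invoking the statement for smaller $\Delta$-Springer varieties.

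For the top cohomology, I would extract the coefficient of $q^d$ in $\Frobq{R_{n,\la,s}}$ using Theorem~\ref{thm:RInvFormula}, which equals the graded Frobenius characteristic of $H^{2d}$ since odd cohomology vanishes. Under the hypothesis $s > \ell(\lambda)$ the fillings $\varphi \in \PRD^\mu_{n,\la,s}$ achieving $\inv(\varphi) = d$ should be in bijection with pairs consisting of a standard Young tableau of shape $\lambda$ together with a free placement of the remaining $n-k$ letters into an available empty row. Summing the corresponding monomial symmetric functions yields $s_\la \cdot h_{n-k}$, which is precisely the Frobenius characteristic of $\mathrm{Ind}\!\uparrow_{S_k\times S_{n-k}}^{S_n}(S^\lambda)$. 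The main obstacle is equidimensionality: the paving controls the Poincar\'e polynomial and the top cohomology statement, but matching it to the geometric claim that every irreducible component has dimension $d$ needs either a clean resolution or a careful inductive fibration, and getting either construction to thread through the extra structure of $\Delta$-Springer varieties is where the real work lies.
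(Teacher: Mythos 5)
Your overall strategy—build an affine paving of $Y_{n,\la,s}$ indexed by partial row-decreasing fillings, identify cell dimensions with a combinatorial statistic, and then read off both the dimension and the top cohomology from Theorem~\ref{thm:RInvFormula}—is in the same spirit as the argument in \cite{GLW}, and you are right to single out equidimensionality as the place where a paving alone does not suffice. However, there are two concrete problems.

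First, you propose to prove that cell dimensions are given by $\inv$. The present paper explicitly cautions (in the remark after Theorem~\ref{thm:GLWMainThm}) that $\inv$ is \emph{not} known to arise from a Schubert-compatible filling; the statistic that demonstrably records cell dimensions is $\dinv$, via the reading-order filling (Lemma~\ref{lem:CellDim}, Lemma~\ref{lem:InvIsInv}, Corollary~\ref{cor:CellDimInv}). Since $\inv$ and $\dinv$ are equidistributed over $\PRD^\mu_{n,\la,s}$ (Theorem~\ref{thm:RInvFormula}), the numerical consequences you want are the same, but the geometric step you invoke—that the attracting cell at the fixed point of $\varphi$ has dimension $\inv(\varphi)$—is unsubstantiated as stated and should be replaced by the $\dinv$ statement. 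Relatedly, the paper's paving comes from intersecting $Y_{N_T}$ with Schubert cells for a Schubert-compatible $T$, not from a Bia\l ynicki--Birula decomposition; the two are morally parallel, but to use BB you would still need to construct the one-parameter subgroup and check that its attracting cells are precisely the Schubert-cell intersections, which you do not do.

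Second, and more seriously, equidimensionality remains a genuine gap. You correctly observe that a paving bounds the dimension of every component by $d$ and exhibits a $d$-dimensional cell, but says nothing about whether a smaller-dimensional cell closure could itself be an irreducible component. You offer two possible remedies—a Springer-style resolution, or a Spaltenstein-type inductive Grassmannian fibration—but carry out neither. The route suggested by the structure of the present paper goes through the realization \eqref{eq:ProjectionDef} of $Y_{n,\la,s}$ as the projection $\pi$ of the Spaltenstein variety $\Spalt_\Lambda^{(1^n,(s-1)^{n-k})}$, which is equidimensional by Spaltenstein's theorem; one then controls the fibers of $\pi$, which (as noted in the proof of the lemma following the definition of $Y^\mu_{n,\la,s}$) are products of Springer fibers. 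The nontrivial point, which your sketch does not address, is that these fiber dimensions are not constant over $Y_{n,\la,s}$—the Jordan types of the induced nilpotents vary—so one must identify a dense open locus of minimal fiber dimension or argue inductively along a Grassmannian fibration. Until one of these is worked out, the equidimensionality statement (and hence the unconditional identification of $d$ as the dimension of \emph{every} component, not just the top one) is not established. Your treatment of the top cohomology—extracting the $q^d$ coefficient, identifying the maximizing fillings, and recognizing $s_\la\, h_{n-k}$ as $\Frob\bigl(\mathrm{Ind}\!\uparrow_{S_k\times S_{n-k}}^{S_n}(S^\la)\bigr)$—is the right idea but is sketched at a level where the required combinatorial bijection with pairs (SYT of shape $\la$, placement of the remaining $n-k$ letters) would still need to be written out and shown to yield exactly the monomial expansion of $s_\la h_{n-k}$.
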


\subsection{Results of this paper}
Our main theorem is a positive expansion of the graded Frobenius characteristic of $H^*(Y_{n,\la,s};\bQ)$ (equivalently of $R_{n,\la,s}$) into modified Hall--Littlewood symmetric functions, a result that was announced in~\cite{GriffinOSP}.
\begin{theorem}\label{thm:HL_exp_rev}
  We have 
  \begin{align}
    \Frob(H^*(Y_{n,\la,s};\bQ);q) &= \rev_q\left[ \sum_{\substack{\nu\in \Par(n,s),\\ \nu\supseteq \lambda}} q^{n(\nu/\lambda)} \prod_{i\geq 0} \qbinom{\nu_i'-\la_{i+1}'}{\nu_i'-\nu_{i+1}'}_q H_\nu(\bx;q)\right]\label{eq:rev-HL-formula}\\
    & = \sum_{\substack{\nu\in \Par(n,s),\\\nu\supseteq \la}} q^{\sum_i (s-\nu_i')(\nu_{i+1}'-\la_{i+1}')}\prod_{i\geq 0} \qbinom{\nu_i'-\la_{i+1}'}{\nu_i'-\nu_{i+1}'}_q \widetilde{H}_\nu(\bx;q),\label{eq:HL-formula}
  \end{align}
  where $\nu'_0\coloneqq s$.
\end{theorem}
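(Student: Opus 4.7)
The plan is to prove the theorem geometrically by the point-counting method flagged in the abstract, working over $\bF_q$ and stratifying a partial-flag variant of $Y_{n,\la,s}$ into pieces that are, up to affine-space factors, Springer fibers. First, using Theorem~\ref{thm:RInvFormula}, one has $\Frob(R_{n,\la,s};q) = \sum_\mu f_\mu(q)\, m_\mu(\bx)$ with $f_\mu(q)=\sum_{\varphi\in\PRD^\mu_{n,\la,s}}q^{\inv(\varphi)}$. I would introduce a partial-flag variant $Y_{n,\la,s}^\mu$ of the $\Delta$-Springer variety, consisting of partial flags of composition type $\mu$ satisfying the defining conditions of $Y_{n,\la,s}$, and---generalizing~(\ref{eq:Springer-point-count})---identify (up to the $\rev_q$ swap between $H_\nu$ and $\widetilde H_\nu$) the polynomial $f_\mu(q)$ with $|Y_{n,\la,s}^\mu(\bF_q)|$. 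The identification would proceed by exhibiting an explicit affine cell decomposition of $Y_{n,\la,s}^\mu$ indexed by $\PRD^\mu_{n,\la,s}$ with $\inv$-weighted cells, as in the classical Springer setting.

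Second, I would stratify $Y_{n,\la,s}^\mu$ by a ``Jordan-type'' invariant $\nu\in\Par(n,s)$ with $\nu\supseteq\la$, read off from a naturally defined nilpotent operator attached to each point. The central geometric step is to show that each stratum $(Y_{n,\la,s}^\mu)^\nu$ has the same $\bF_q$-point count as a product
\[
\Stein_\nu^\mu \;\times\; \prod_{i\geq 0}\Gr(\nu_i'-\nu_{i+1}',\,\nu_i'-\la_{i+1}') \;\times\; \bA^{e_\nu},
\]
where $\Stein_\nu^\mu$ parametrizes partial flags of type $\mu$ preserved by a fixed nilpotent of Jordan type $\nu$, the Grassmannian factor parametrizes the ``new'' subspaces one adjoins to grow $\la$ into $\nu$, and the affine factor of dimension $e_\nu$ absorbs the remaining freedom in the $\Delta$-Springer data.

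Finally, counting $\bF_q$-points of the stratification and applying~(\ref{eq:Springer-point-count}) with $\nu$ in place of $\la$ yields
\[
\sum_\mu |Y_{n,\la,s}^\mu(\bF_q)|\, m_\mu(\bx) \;=\; \sum_{\substack{\nu\supseteq\la\\ \nu\in\Par(n,s)}} q^{e_\nu} \prod_i \qbinom{\nu_i'-\la_{i+1}'}{\nu_i'-\nu_{i+1}'}_q \widetilde H_\nu(\bx;q).
\]
A bookkeeping computation matches $e_\nu$ with $\sum_i(s-\nu_i')(\nu_{i+1}'-\la_{i+1}')$, and the standard conversion between $H_\nu$ and $\widetilde H_\nu$, together with the complex dimension $n(\la)+(s-1)(n-k)$ of $Y_{n,\la,s}$ from~\cite{GLW}, reconciles~(\ref{eq:HL-formula}) with~(\ref{eq:rev-HL-formula}). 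The main obstacle is the second step: pinning down the right Jordan-type invariant on the $\Delta$-Springer data (which is subtler than in the classical Springer setting because $Y_{n,\la,s}$ carries extra framings beyond the partition $\la$) and verifying that each stratum decomposes on the nose into the predicted product, with the Grassmannian parameters and the affine dimension $e_\nu$ matching the theorem's coefficients.
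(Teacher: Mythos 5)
Your proposal follows the paper's proof strategy quite closely: pass from the graded Frobenius characteristic to the $m_\mu$-coefficients via Theorem~\ref{thm:RInvFormula}, identify each coefficient with $|Y^\mu_{n,\la,s}(\bF_q)|$ by an affine paving (Theorem~\ref{thm:Delta-Springer-point-count} in the paper), decompose $Y^\mu_{n,\la,s}$ into pieces that are Steinberg varieties crossed with affine spaces, and finish by applying the classical point-count formula~\eqref{eq:Springer-point-count} for $\widetilde H_\nu$ and the $\rev_q$ conversion. That is exactly the paper's route.

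The one genuine difference is in how the middle decomposition is organized. You propose stratifying $Y^\mu_{n,\la,s}$ directly by the Jordan type $\nu$ of $N^t|_{V_n}$ and claiming each stratum has the same $\bF_q$-count as $\Stein_\nu^\mu\times\prod_i\Gr(\nu_i'-\nu_{i+1}',\nu_i'-\la_{i+1}')\times\bA^{e_\nu}$. The paper instead works one level finer: it stratifies $Y^\mu_{n,\la,s}$ by the Schubert cell of $V_n$ in the Grassmannian $\Gr(n,\bF^K)$, indexed by compositions $\alpha\in\Comp(n,s)$ with $\alpha\supseteq\la$. Each stratum $\widehat Z^\mu_\alpha$ is shown (Theorem~\ref{thm:main-iso}) to be \emph{isomorphic}---not merely point-count equivalent---to $\bF^{\ell_\alpha}\times\Stein^\mu_{\sort(\alpha)}$, with an explicit affine dimension $\ell_\alpha=\sum_i(s-\alpha_i')(\alpha_{i+1}'-\la_{i+1}')+\coinv(\alpha)$; the $q$-binomial then emerges from the separate combinatorial Lemma~\ref{lem:coinv-lemma}, which sums $q^{\coinv(\alpha)}$ over compositions $\alpha$ with $\sort(\alpha)=\nu$. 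This finer decomposition by Schubert cells is easier to make precise, because each piece is a locally closed intersection with a Schubert cell and the isomorphism is built explicitly via unipotent operators $U_{i,j}(t)$ that commute with $N^t$. Your Grassmannian factor is correct at the level of $\bF_q$-counts (since $|\Gr(a,b)(\bF_q)|=\qbinom{b}{a}_q$), but the Jordan-type stratum is actually a disjoint union of affine spaces over a Steinberg variety rather than a literal product with a Grassmannian; you would in practice need to refine your stratification to the compositions $\alpha$ to carry out the verification, which is exactly what the paper does. One small further note: the paper's cell-dimension statement (Corollary~\ref{cor:CellDimInv}) is phrased in terms of $\dinv$ and the reading-order filling, not $\inv$; the two statistics are equidistributed by Theorem~\ref{thm:RInvFormula}, but the paper explicitly remarks that $\inv$ does not obviously arise from a Schubert-compatible filling, so if you want to literally realize your $f_\mu(q)$ as cell dimensions you should use $\dinv$.
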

A special case of this formula has led to the construction of \emph{higher Specht bases} for some infinite subfamilies of the rings $R_{n,\la,s}$~\cite{GillespieRhoades}.

The outline of the proof of Theorem~\ref{thm:HL_exp_rev} is as follows. We start with the fact that the left-hand side of~\eqref{eq:rev-HL-formula} is given by the inversion formula in Theorem~\ref{thm:RInvFormula}. We then show that $Y^\mu_{n,\la,s}$, the projection of $Y_{n,\la,s}$ down to a partial flag variety, has an affine paving such that the dimension of a cell is computed by the inversion statistic $\inv$. We use this to show that the $m_\mu(\bx)$ coefficient of the inversion formula in Theorem~\ref{thm:RInvFormula} can be computed by counting $\Fq$ points of $Y_{n,\la,s}^\mu$. We then show that the $\Fq$ points of $Y_{n,\la,s}^\mu$ can alternatively be computed by partitioning $Y_{n,\la,s}^\mu$ into a disjoint union of copies of affine spaces crossed with Steinberg varieties.

When $\lambda = (1^k)$ and $s=k$, the right-hand side of \eqref{eq:rev-HL-formula} is the right-hand side of \cite[Theorem 6.14]{HRS1}, the formula for $\Frob(R_{n,k};q)$ proven by Haglund, Rhoades, and Shimozono. Since $R_{n,k} \cong H^*(Y_{n,(1^k),k};\bQ)$ (up to doubling the degree), this can be seen as a solution to Problem 9.9 in \cite{Pawlowski-Rhoades}, but with the space of spanning line configurations $X_{n,k}$ replaced by the $\Delta$-Springer variety $Y_{n,(1^k),k}$.

\begin{comment}
We also use our $\bF_q$-point counting techniques to give a geometric proof of the following identities involving $\Frob(R_{n,k};q)$, due to Haglund, Rhoades, and Shimozono (though stated in a different form) who proved them using exact sequences. These identities uniquely characterize the symmetric functions $\Frob(R_{n,k};q)$. 

\begin{theorem}[{\cite{HRS1}}]\label{thm:Rnk_master_formula}
  Let $\Omega[\bx] = \prod_{i\geq 1}(1-x_i)^{-1}$. For any positive integer $s$,
  \[
    \Omega\left[\frac{(1-q^s)\bx}{1-q}\right] =  \sum_{n\geq 0}\sum_{k=1}^s q^{(n-k)(s-k)} \qbinom{s}{k}_q \Frob(R_{n,k};q).
  \]
\end{theorem}
\end{comment}

%%%%%%%%%%%%%%%%%%%%%%%%%%%%
\section{Background}\label{sec:Background}
%%%%%%%%%%%%%%%%%%%%%%%%%%%%

\subsection{Compositions and partitions}

Let us recall basic notions about symmetric functions and their connections to $S_n$-modules. A \textbf{composition} $\alpha$ of $n$ of length $s$ is a tuple $\alpha=(\alpha_1,\alpha_2,\dots, \alpha_s)$ of nonnegative integers such that $\alpha_1+\cdots+\alpha_s = n$. We write $|\alpha|=n$ for the size of $\alpha$. Let $\Comp(n,s)$ be the set of compositions of $n$ with length $s$. We say $\alpha$ is a \textbf{strong composition} if $\alpha_i>0$ for all $i\leq s$. Given $\alpha$ a strong composition of $n$, let
\[
  \alpha[i] \coloneqq \{\alpha_1+\cdots+\alpha_{i-1}+1, \alpha_1+\cdots+\alpha_{i-1}+2,\dots, \alpha_1+\cdots+\alpha_{i}\}
\]
be the \textbf{$i$th block of $\alpha$}.

A \textbf{partition} is a composition $\lambda$ such that $\la_1\geq \la_2\geq \cdots \geq\la_s\geq 0$. The \textbf{length} of $\lambda$, written $\ell(\la)$, is the number of positive parts of $\la$. We sometimes write $\la\vdash n$ to mean that $\lambda$ is a partition of $n$. The \textbf{conjugate} partition $\la'$ is the partition of $n$ whose $i$th entry $\la_i'$ is the number of indices $j$ such that $\la_j\geq i$. Occasionally, we write $(a^b)$ to mean the partition $(a,a,\dots,a)$ with $b$ many $a$'s. We write $(\la,a)$ to mean the partition obtained by appending $a$ to the end of $\la$. Let $\Par(n,s)$ be the set of partititions $\la$ of $n$ such that $\ell(\la)\leq s$, where we identify partitions up to adding trailing $0$s.

Given $\alpha\in \Comp(n,s)$ and $\la\in \Par(n,s)$, we write $\alpha\supseteq \la$ if $\alpha_i\geq \la_i$ for all $i\leq s$. A \textbf{coinversion} of $\alpha$ is a pair $(i,j)$ such that $1\leq i<j\leq s$ and $\alpha_i<\alpha_j$. Let $\coinv(\alpha)$ be the number of coinversions of $\alpha$.

\subsection{Symmetric functions and q-analogues}\label{subsec:SymmetricFuncs}

A \textbf{symmetric function} $f$ is a formal power series in the variables $\bx \coloneqq \{x_1,x_2,\dots\}$ that is invariant under any permutation of the variables. Let $m_\lambda(\bx)$, $h_\lambda(\bx)$, and $s_\lambda(\bx)$ be the usual \textbf{monomial}, \textbf{complete homogeneous}, and \textbf{Schur symmetric functions}, each of which form a basis of the ring of symmetric functions as $\lambda$ ranges over all partitions of $n$. See \cite{Macdonald} for their definitions.
We define
\[
  \Omega[\bx] \coloneqq \prod_{i\geq 1} \dfrac{1}{1-x_i} =    \sum_{m\geq 0} h_m(\bx).
\]

To each finite-dimensional representation of $S_n$ over $\mathbb{Q}$, we associate a symmetric function via the Frobenius characteristic map as follows. For $\lambda\vdash n$, let $S^\lambda$ be the Specht module, which is the irreducible representation of $S_n$ indexed by $\lambda$. Given a finite-dimensional vector space $V$ over $\bQ$ that has the structure of an $S_n$-module, it decomposes as a direct sum of Specht modules, $V \cong \bigoplus_{\lambda\vdash n} (S^\lambda)^{\oplus a_\lambda}$, where $a_\lambda$ is the multiplicity of $S^\lambda$ in $V$. The \textbf{Frobenius characteristic} of $V$ is then defined to be
\[
  \Frob(V) \coloneqq \sum_{\lambda\vdash n} a_\lambda s_\lambda(\bx).
\]

Given a graded vector space $V = \bigoplus_{i\geq 0} V^{(i)}$ where $V^{(i)}$ is finite dimensional, its \textbf{Hilbert series} is the generating function
\[
  \Hilb(V;q) \coloneqq \sum_{i\geq 0} \dim(V^{(i)}) q^i.
\]
If each $V^{(i)}$ also has the structure of a finite-dimensional $S_n$-module, its \textbf{graded Frobenius characteristic} is
\[
  \Frob(V;q) \coloneqq \sum_{i\geq 0} \Frob(V^{(i)}) q^i.
\]

The ring $\Lambda_{\bQ(q)}$ has a basis given by the \textbf{Hall--Littlewood symmetric functions} $P_\mu(\bx;q)$, which have the property that
\[
  s_\lambda(\bx) = \sum_{\mu\vdash n} K_{\lambda,\mu}(q) P_\mu(\bx;q),
\]
where $K_{\lambda,\mu}(q)$ is the Kostka--Foulkes polynomial, see~\cite{Macdonald} for more details. The \textbf{dual Hall--Littlewood symmetric functions} are defined by
\[
  H_\mu(\bx;q) = \sum_{\lambda\vdash n} K_{\la,\mu}(q) s_\la(\bx).
\]
These symmetric functions are sometimes alternatively denoted by $Q'_\mu(\bx;q) = H_\mu(\bx;q)$. Given a polynomial $f(q) = a_0 + a_1q + \cdots +a_mq^m$ with symmetric function coefficients such that $a_m\neq 0$, the \textbf{$q$-reversal} of $f$ is $\rev_q(f) = a_m + a_{m-1}q + \cdots + a_1 q^{m-1} + a_0q^m$. In the case of Hall-Littlewood symmetric functions, the degree of $H_\mu(\bx;q)$ as a polynomial in $q$ is $n(\la) \coloneqq \sum_i\binom{\la_i'}{2}$. The reversal of $H_\mu(\bx;q)$ is the \textbf{modified (dual) Hall--Littlewood symmetric function}, written
\[
  \widetilde H_\mu(\bx;q) = \rev_q\left(H_\mu(\bx;q)\right) = q^{n(\mu)}H_\mu(\bx;q^{-1}).
\]

We use the following standard $q$-analogues of integers, factorials, and binomial coefficients.
\begin{align}
  [n]_q &\coloneqq 1+q+\cdots+q^{n-1},\\
  [n]_q! &\coloneqq [n]_q [n-1]_q\cdots [2]_q[1]_q,\\
  \qbinom{n}{k}_q&\coloneqq \frac{[n]_q!}{[k]_q![n-k]_q!}.
\end{align}

\subsection{Flag varieties and Schubert cells}\label{subsec:Flags}

Given a field $\bF$, a \textbf{partial flag} in $\bF^K$ is a nested sequence of vector subspaces of $\bF^K$,
\begin{align}
    V_\bullet = (V_1\subset V_2\subset\dots\subset V_m).
\end{align}
Given a strong composition $\mu = (\mu_1,\dots, \mu_m)$ such that $|\mu|=\mu_1+\cdots+\mu_m \leq K$,  define the \textbf{partial flag variety} to be the set of partial flags of $\bF^K$ such that the dimensions of the successive quotients $V_i/V_{i-1}$ are recorded by the parts of $\mu$,
\begin{align}
  \cB^\mu(\bF^K) \coloneqq \{V_\bullet = (V_1\subset\dots\subset V_m) \st  V_i\subseteq\bF^K,\, \dim(V_i/V_{i-1}) = \mu_i\text{ for }i\leq m\}.
\end{align}
Here, by convention we define $V_0 \coloneqq 0$. We occasionally write $\cB^\mu \coloneqq \cB^\mu(\bF^K)$ when $|\mu| = K$ and the field $\bF$ is clear from context.
The partial flag variety is realized as a projective algebraic variety as $G/P^\mu$, where $G=GL_K$ and $P^\mu$ is the parabolic subgroup of block upper triangular matrices with blocks of sizes $\mu_1,\mu_2,\dots, \mu_m,K-|\mu|$. We remark that although $\cB^\mu(\bF^K)$ is isomorphic to $\cB^{\mu,K-|\mu|}$, it will be notationally convenient for us to distinguish the two spaces.
  In the case when $K=n$ and $\mu = (1^n)$, $\cB^{(1^n)}$ is the \textbf{complete flag variety}.

  Returning to the general case when $|\mu|\leq K$, let $n \coloneqq |\mu|$, and let $[n] \coloneqq \{1,2,\dots, n\}$. Given an injective map $w : [n]\rightarrow [K]$, we say that $w$ is \textbf{$\mu$-increasing} if for each $i\leq \ell(\mu)$, $w$ is increasing on the $i$th block of $\mu$, that is $w_j<w_k$ for all elements $j<k$ of $\mu[i]$.
 Given any $w$, let $\sort_\mu(w)$ be the unique injective $\mu$-increasing function such that the images of the set $\mu[i]$ under $w$ and $\sort_\mu(w)$ are the same for each $i$.

Define $f_1,f_2,\dots,f_K$ to be the standard ordered basis of $\bF^K$.  Given a $\mu$-increasing injective map $w:[n]\rightarrow[K]$, let the {\bf coordinate flag} $F_\bullet^{(w)}\in \cB^\mu(\bF^K)$ be defined by setting $F^{(w)}_p=\vspan\{f_{w(1)},\ldots, f_{w(\mu_1+\cdots +\mu_p)}\}$ for all $p$ such that $1\leq p\leq \ell(\mu)$.  
Now define the {\bf Schubert cell} $C_w$ to be the $P^\mu$ orbit of $F^{(w)}_\bullet$. When $\bF = \bC$, the Schubert cells are the cells of an affine paving (in fact, a CW-complex) of the partial flag variety.

There is another description of the Schubert cells that will be useful for us. Given a vector $v = \sum_i a_if_i\in \bF^K$, we say its \textbf{leading term} is the term $a_if_i$ with $i$ maximal such that $a_i\neq 0$.  Given any $V_\bullet\in C_w\subseteq \cB^\mu(\bF^K)$, for each $i$ such that $1\leq i\leq \ell(\mu)$, there exist unique vectors $v_1,\dots,v_n\in \bF^K$ such that for $p\in \mu[i]$, we have $v_p\in V_i\setminus V_{i-1}$, $v_p$ has leading term $f_{w(p)}$, and
\begin{equation} \label{eq:SchubCellCoords}
v_{p} =f_{w(p)}+\sum_{h=1}^{w(p)-1} \beta_{w(p),h}\, f_h,
\end{equation}
for some $\beta_{w(p),h}$ such that $\beta_{w(p),h}=0$ if $h\in\{w(1),\ldots,w(p-1)\}$.  Note $V_i=\vspan\{v_1,\ldots,v_{\mu_1+\cdots+\mu_i}\}$.  The $\beta_{w(p),h}$ that are not required to be 0 can be taken as algebraically independent coordinates on $C_w$. We will say that \eqref{eq:SchubCellCoords} is the \textbf{standard coordinate representation} of $V_\bullet$ and that the $\beta_{w(p),h}$ are the \textbf{standard coordinates} of the Schubert cell. Equivalently, if $V_\bullet$ is represented as a matrix whose initial column spans are the parts of the flag $V_i$, then the $\beta_{w(p),h}$ are the matrix entries after column reducing.

Note that we have a projection map
\[\pi^\mu : \cB^{(1^n)}(\bF^K) \to \cB^\mu(\bF^K)\]
defined by sending $V_\bullet$ to $(V_{\mu_1},V_{\mu_1+\mu_2},\dots,V_n)$. Given $w$ a $\mu$-increasing injective function, it is evident from the coordinate description of the Schubert cell $C_w\subseteq \cB^{(1^n)}(\bF^K)$ that $C_w$ is mapped isomorphically under $\pi^\mu$ onto the corresponding Schubert cell $C_w\subseteq \cB^\mu(\bF^K)$ (which justifies our abuse of notation by not decorating $C_w$ with $\mu$).

\subsection{Springer fibers and Steinberg varieties}

Springer fibers are subvarieties of the complete flag variety studied by Springer~\cite{Springer-TrigSum,Springer-WeylGrpReps}. Given a nilpotent $n\times n$ matrix $N$ over $\bF$, the Springer fiber associated to it is
\[
\cB_N(\bF) \coloneqq \{V_\bullet \in \cB^{(1^n)} \st N V_i \subseteq V_i\text{ for }i\leq n\}
\]
with the reduced induced scheme structure.
Given two nilpotent matrices whose Jordan block sizes are recorded by the partition $\la\vdash n$, their associated Springer fibers are isomorphic, so we simply denote the Springer fiber by $\cB_\lambda \coloneqq \cB_N$, where $\la$ is the Jordan type of $N$. Springer showed that although the symmetric group $S_n$ does not act directly on $\cB_\lambda$, it does act on the cohomology ring $H^*(\cB_\lambda(\bC);\bQ)$ and used this action to geometrically construct the irreducible representations of $S_n$. The connection between Springer fibers and symmetric functions is summed up in the following elegant formula~\cite{Garsia-Procesi,Hotta-Springer},
\begin{align}\label{eqn:Hotta-Springer}
  \widetilde H_\lambda(\bx;q) = \Frobq{H^*(\cB_\lambda(\bC);\bQ)}.
\end{align}
(Recall our convention that we grade $H^*(\cB_\lambda(\bC);\bQ)$ so that the $2k$-th cohomology is in degree $k$.)

It can be shown by induction that the Springer fiber has an alternative definition as a variety where the conditions $NV_i\subseteq V_i$ are replaced with the conditions $NV_i\subseteq V_{i-1}$ for all $i$, since $N$ is nilpotent and we are considering complete flags. For partial flags of type $\mu$ a composition of $K$, these conditions give rise to two different varieties, which we refer to as the \textbf{Steinberg} and the \textbf{Spaltenstein varieties}, defined respectively as
\begin{align*}
  \Stein_\lambda^\mu(\bF) &\coloneqq \{V_\bullet\in \cB^\mu \st NV_i\subseteq V_i \text{ for }1\leq i\leq \ell(\mu)\},\\
  \Spalt_\lambda^\mu(\bF) &\coloneqq \{V_\bullet\in \cB^\mu \st NV_i \subseteq V_{i-1}\text{ for }1\leq i\leq \ell(\mu)\}.
\end{align*}
We note that $\Spalt_\lambda^\mu$ is denoted by $\cB^\lambda_\mu$ in~\cite{GLW}. By functoriality of the construction of the Springer action, there is an isomorphism of graded vector spaces ~\cite{Borho-Macpherson,Brundan-Ostrik}
\begin{align}\label{eq:BMIso}
  H^*(\Stein_\lambda^\mu(\bC);\bQ) \cong H^*(\cB_\lambda(\bC);\bQ)^{S_\mu},
\end{align}
where the superscript $S_\mu$ denotes taking the fixed subspace under the action of the Young subgroup $S_\mu\coloneqq S_{\mu[1]}\times\cdots \times S_{\mu[\ell(\mu)]}\subseteq S_n$ permuting the elements in the sets $\mu[1],\dots,\mu[\ell(\mu)]$ independently. Similarly, there is an identification of the cohomology of $\Spalt_\la^\mu(\bC)$ with the $S_\mu$-anti-invariants of the cohomology of the Springer fiber, up to a grading shift.

For $X$ a complex variety, an \textbf{affine paving} of $X$ is a filtration of $X$ by closed subvarieties
\[
  \emptyset=X_0\subseteq X_1\subseteq X_2\subseteq\cdots \subseteq X_m = X
\]
such that for all $i$, $X_i\setminus X_{i-1}\cong \bigsqcup_{j} C_{i,j}$, a disjoint union where $C_{i,j}\cong \bC^{a_{i,j}}$ for some $a_{i,j}$. 
If a compact complex variety $X$ has an affine paving, then the dimensions of the even degree cohomology groups can be computed by counting dimensions of cells,
\[
\dim_\bQ H^{2d}(X;\bQ) = \#\{(i,j) \st \dim_\bC(C_{i,j}) = d\},
\]
and the odd degree cohomology groups are $0$.

%If $X$ is defined over any field such that $X(\bC)$ has an affine paving, then we have a connection between the number of points of $X$ over $\bF_q$ and the Hilbert series of the cohomology ring,
%\begin{align}
%  |X(\bF_q)| &= \sum_{d\geq 0} \#\{(i,j) \st \dim_\bC(C_{i,j}) = d\} q^d \\
%             &= \Hilb(H^{2d}(X(\bC);\bQ);q).
%\end{align} 

It is well known~\cite{Brundan-Ostrik,Fressepaving, Spaltenstein} that both $\Stein_\la^\mu(\bC)$ and $\Spalt_\la^\mu(\bC)$ have affine pavings. Hence, their odd degree cohomology groups are $0$.

\subsection{\texorpdfstring{$\Delta$}{Delta}-Springer varieties and affine pavings}\label{subsec:AffinePaving}
We now recall the definition of the $\Delta$-Springer variety $Y_{n,\la,s}$ and the affine pavings of $Y_{n,\la,s}$ constructed in~\cite{GLW}.

Fix an integer $n\geq 0$, a partition $\lambda$ of size $|\lambda|=k\leq n$, and an integer $s\geq \ell(\lambda)$. Define $\Lambda \coloneqq \Lambda(n,\la,s) = (\la_1+n-k,\la_2+n-k,\dots, \la_s+n-k)$, where $\la_i \coloneqq 0$ for $i>\ell(\la)$, and define $K \coloneqq |\Lambda| = k+(n-k)s$. Given $\bF$ a field and a nilpotent matrix $N_\Lambda$ over $\bF$ of Jordan type $\Lambda$, the \textbf{$\Delta$-Springer variety} over $\bF$ is defined to be
\[
Y_{n,\la,s}(\bF) \coloneqq \{V_\bullet \in \cB^{(1^n)}(\bF^K)\st NV_i\subseteq V_i \text{ for all }i,\, N_\Lambda^{n-k}\bF^{K}\subseteq V_n\}.
\]
The $\Delta$-Springer variety can equivalently be defined as the projection of a certain Spaltenstein variety. Letting $(1^n,(s-1)^{n-k}) = (1^n,s-1,s-1,\dots,s-1)$, where $s-1$ is repeated $n-k$ many times, and $\pi: \cB^{(1^n,(s-1)^{n-k})}\to \cB^{(1^n)}(\bF^K)$ be the projection map that forgets all but the first $n$ parts of the flag, we have ~\cite[Lemma 5.8]{GLW}
\begin{align}\label{eq:ProjectionDef}
  Y_{n,\la,s} = \pi\left(\Spalt^{(1^n,(s-1)^{n-k})}_{\Lambda}\right).
\end{align}
When $n=|\lambda| = k$ and $s$ is arbitrary, the $\Delta$-Springer variety specializes to $\cB_\la$, the usual Springer fiber.

We denote by $[\Lambda]$ the Young diagram of $\Lambda$ according to English convention, which formally is the set
\[
  [\Lambda] = \{(i,j) \st 1\leq i \leq \ell(\Lambda), 1\leq j\leq \Lambda_i\},
\]
where $(i,j)$ is the cell in the $i$th row from the top and the $j$th column from the left. There are two copies of $[\lambda]$ inside of $[\Lambda]$ that we will consider, which are respectively left justified and right justified inside of $[\Lambda]$,
\begin{align*}
  [\lambda] &\coloneqq \{(i,j) \st 1\leq i\leq \ell(\la), 1\leq j\leq \la_i\},\\
  [\lambda]^r &\coloneqq \{(i,j)\st 1\leq i\leq \ell(\la), \Lambda_i - \la_i + 1 \leq j\leq \Lambda_i\}.
\end{align*}

\begin{figure}
  \centering
  \includegraphics[scale=0.5]{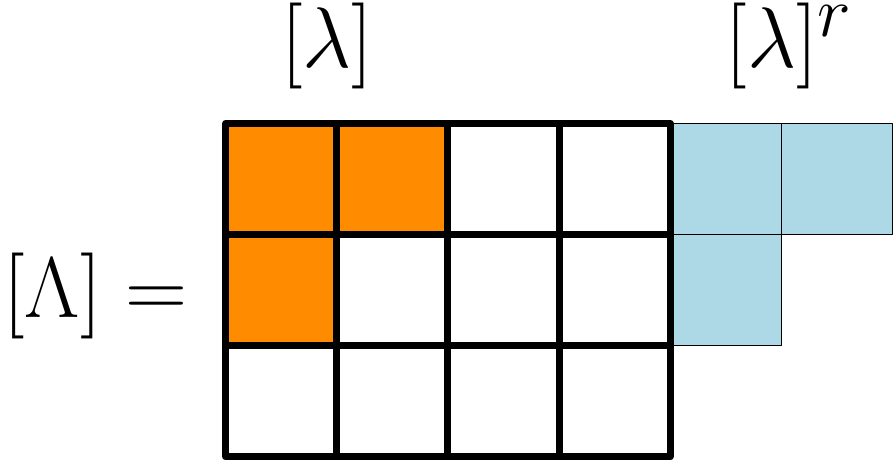}
  \caption{An example of $[\Lambda]$ for $n=7$, $\la=(2,1)$, $s=3$, with the two copies of the Young diagram of $\la$ shaded.\label{fig:Lambda}}
\end{figure}

The affine pavings of $Y_{n,\la,s} = Y_{n,\la,s}(\bC)$ are defined in terms of specific choices of the nilpotent matrix $N_\Lambda$. Recall $f_1,\dots, f_K\in\bC^K$ is the standard basis, and let $[K]\coloneqq \{1,2,\dots, K\}$. Given a bijection $T : [\Lambda] \to [K]$, define two nilpotent matrices $N_T$ and $N_T^t$ by
\begin{align}
  N_T (f_{T(i,j)}) &\coloneqq \begin{cases} 0 & \text{if } j = \Lambda_i\\ f_{T(i,j+1)} &\text{otherwise.}\end{cases}\\
  N_T^t(f_{T(i,j)}) &\coloneqq \begin{cases} 0 & \text{if }j=1 \\ f_{T(i,j-1)} &\text{otherwise.}\end{cases}
\end{align}
For example, for $T$ as in Figure~\ref{fig:SchubCompatExample}, we have $N_T f_5 = f_3$ and $N_T f_1 = 0$, whereas $N_{T}^t f_3 = f_5$ and $N_T^t f_8 = 0$.
Observe that both $N_T$ and $N_T^t$ have Jordan type $\Lambda$ by construction and that $N_T^t$ is simply the transpose of the matrix $N_T$. We define $Y_{N_T}$ and $Y_{N_T^t}$ to be the $\Delta$-Springer varieties for $N_T$ and $N_T^t$, respectively, where we are abusing notation by suppressing the data of $n$, $\la$, and $s$.

\begin{definition}\label{def:SchubCompat}
  We say that $T$ is \textbf{$(n,\la,s)$-Schubert compatible} if
\begin{itemize}
\item $T$ restricts to a bijection between $[\la]^r$ and $[k]$.
\item $T$ is decreasing along each row from left to right.
\item For all $(i,j)\in [\la]^r$, the label $T(i,j)$ is greater than all labels in column $j+1$.
\item For $i'<i$, we have $T(i',\Lambda_{i'}) < T(i,\Lambda_{i})$.
\item Whenever $T(a,b) > T(c,d)$ for $b,d>1$, then $T(a,b-1) > T(c,d-1)$.
\end{itemize}
When $n,\la$, and $s$ are clear from context, we simply say $T$ is \textbf{Schubert compatible}.
\end{definition}

\begin{definition}
  The \textbf{reading order} of $[\Lambda]$ is the sequence of cells obtained by reading down each column of $[\Lambda]$, ordering the columns from right to left. 
  The \textbf{reading order filling} of $[\Lambda]$ is the unique bijection $T:[\Lambda]\to [K]$ such that $T(i,j) = \ell$ if and only if $(i,j)$ is the $\ell$th cell in reading order. 
\end{definition}

It is noted in~\cite{GLW} that the reading order filling is Schubert compatible.
%
%\begin{lemma}\label{lem:InvAreSchubCompat}
%The diagonal inversion filling of $[\Lambda]$ is Schubert-compatible fillings. 
%\end{lemma}
%
%\begin{proof} 
%  All of the requirements in Definition~\ref{def:SchubCompat} are straightforward to check, except the last requirement. We check this case and leave the rest to the reader. Let $T$ be the diagonal inversion filling of $[\Lambda]$, and suppose $T(a,b)>T(c,d)$ for $b,d>1$. Then either $(a,b)$ is in the same column as $(c,d)$ and below it, or $(a,b)$ is in a column to the left of $(c,d)$. Therefore, $(a,b-1)$ is either in the same column as $(c,d-1)$ and below it, or in a column to the left of $(c,d-1)$, so $T(a,b-1)>T(c,d-1)$.
%\end{proof}
%
See the left-most filling in Figure~\ref{fig:SchubCompatExample} for an example of the reading order filling.

When $T$ is Schubert compatible, the intersection of a Schubert cell with $Y_{N_T}$ is either empty or a copy of affine space, and the nonempty intersections are the cells of an affine paving of $Y_{N_T}$. In fact, these cells have a recursive structure which we state next.

\begin{figure}
  \centering
  \includegraphics[scale=0.5]{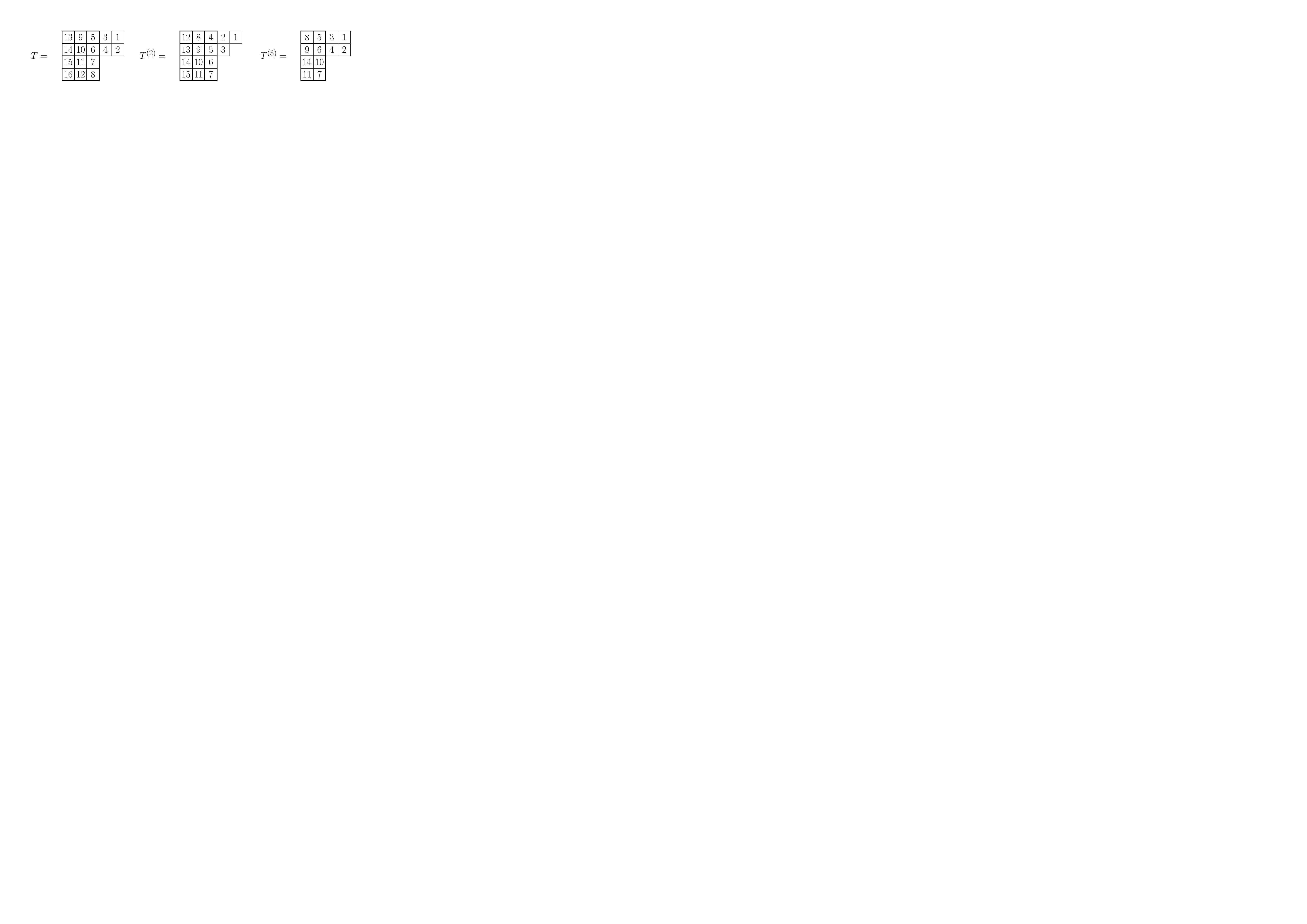}
  \caption{The reading order filling $T$ of $[\Lambda(7,(2,2),4)]$, and the associated $T^{(2)}$ and $T^{(3)}$.\label{fig:SchubCompatExample}}
\end{figure}  

Let $T$ be Schubert compatible. For $1\leq i \leq s$, define the \textbf{flattening function} $\fl_T^{(i)}$ and filling $T^{(i)}$ as follows. If $i\leq \ell(\la)$, then $\fl_T^{(i)}$ is the unique order-preserving function with the following domain and codomain, 
\[
  \fl_T^{(i)} : [K] \setminus \{T(i,\Lambda_i)\} \to [K-1].
\]
Let $T^{(i)}$ be the filling obtained by deleting the cell $(i,\Lambda_i)$, applying $\fl_T^{(i)}$ to the label in each remaining cell, and reordering the rows so that the labels in the right-most cells increase from top to bottom. 
If $i > \ell(\la)$, then $\fl_T^{(i)}$ is the unique order-preserving function
\[
  \fl_T^{(i)} : [K]\setminus (\{T(i,\Lambda_i)\}\cup \{T(i',1) \st i'\neq i\})\to [K-s].
\]
We define $T^{(i)}$ in the same way as the previous case, except we also delete the cells $(i',1)$ for $i'\neq i$ and shift those rows to the left by one unit before applying $\fl_T^{(i)}$ and reordering the rows. See Figure~\ref{fig:SchubCompatExample} for examples.

Given an injective map $w: [n]\to [K]$, we say $w$ is \textbf{admissible} with respect to $T$ if the image of $w$ contains $[k]$ and whenever $T(a,b) = w(i)$ for some $i$ then either $b=\Lambda_a$ or $T(a,b+1) = w(i')$ for some $i'<i$.

We have the following recursion for the cells $C_w\cap Y_{N_T}$.

\begin{lemma}[\cite{GLW}]\label{lem:CellRecursion}
  Let $T$ be Schubert compatible. 
  The intersection $C_w\cap Y_{N_T}$ is nonempty if and only if $w$ is admissible with respect to $T$. If $C_w\cap Y_{N_T}$ is nonempty, then there is an isomorphism
  \begin{equation}
C_w\cap Y_{N_T}\cong \bC^{i-1} \times (C_{\fl_T^{(i)}(w)} \cap Y_{N_{T^{(i)}}}).
 \end{equation}
\end{lemma}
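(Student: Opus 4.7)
The plan is to strong-induct on $K$ by peeling off the line $V_1$ of the flag. For any $V_\bullet \in C_w \cap Y_{N_T}$, the 1-dimensional space $V_1 = \bC v_1$ is $N_T$-stable, so $v_1 \in \ker N_T = \vspan\{f_{T(a, \Lambda_a)} : a \in [s]\}$; its leading term $f_{w(1)}$ in the standard coordinates~\eqref{eq:SchubCellCoords} must therefore equal $f_{T(i, \Lambda_i)}$ for a unique $i \in [s]$, which is the $i$ in the statement. For the admissibility criterion, writing $v_p = f_{w(p)} + \sum_{h<w(p)} \beta_{w(p),h} f_h$ and examining the leading term of $N_T v_p$, the inclusion $N_T V_p \subseteq V_p$ forces, whenever $w(p) = T(a,b)$ with $b < \Lambda_a$, that $T(a, b+1) = w(p')$ for some $p' < p$. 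The containment $[k] \subseteq \im(w)$ comes from $N_T^{n-k} \bC^K \subseteq V_n$ via the Schubert-compatibility identification $[k] = T([\lambda]^r)$. Conversely, given admissibility, I would verify directly that the coordinate flag $F^{(w)}_\bullet$ lies in $C_w \cap Y_{N_T}$.

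For the $\bC^{i-1}$ factor: since $T$ is row-decreasing, $N_T$ strictly decreases basis-vector indices, so $N_T v_1$ has no $f_{w(1)}$-component, forcing $N_T v_1 = 0$. The free coordinates of $v_1$ are then $\beta_{w(1), T(i', \Lambda_{i'})}$ with $T(i', \Lambda_{i'}) < w(1)$; by the Schubert-compatibility condition that last-column labels strictly increase with the row index, these are exactly the $i' < i$, giving $i - 1$ free parameters. For the recursive factor, I would reduce $(V_2, \ldots, V_n)$ to an element of $Y_{N_{T^{(i)}}}$ using the relabeling $\fl_T^{(i)}$. When $i \leq \ell(\lambda)$, this is the quotient $V_\bullet / V_1$ in $\bC^K / V_1 \cong \bC^{K-1}$, identifying the induced $\overline{N_T}$ with $N_{T^{(i)}}$; one checks that the reduced flag lies in $Y_{n-1, \la', s}$ with $\la'$ obtained from $\la$ by decrementing $\la_i$, and that $N_T^{n-k} \bC^K \subseteq V_n$ descends appropriately since $V_1 \subseteq N_T^{n-k}\bC^K$ and $n - k = (n-1) - (k-1)$. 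When $i > \ell(\lambda)$, the reduction must instead land in an $(K - s)$-dimensional quotient whose basis is indexed by the cells of $T^{(i)}$.

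The main obstacle is the $i > \ell(\lambda)$ case: quotienting $\bC^K$ by $V_1$ alone only shortens row $i$ of the Jordan type of $N_T$, whereas $T^{(i)}$ corresponds to simultaneously shortening all other rows by removing their first-column cells, so a more refined reduction is needed. I would handle this by constructing an $N_T$-equivariant reduction of $\bC^K$ compatible with $V_\bullet$, using the containment $N_T^{n-k} \bC^K \subseteq V_n$ and the admissibility of $w$ to control how the first-column basis vectors $f_{T(i', 1)}$ for $i' \neq i$ appear in the flag. Reversibility --- lifting a flag in $Y_{N_{T^{(i)}}}$ together with the $i - 1$ parameters back to $Y_{N_T}$ --- then follows by specifying the free $\beta$ coordinates of $v_1$ and reconstructing the remaining Schubert coordinates from the $N_T$-stability equations, establishing the asserted isomorphism of varieties.
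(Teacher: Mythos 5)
The paper cites this lemma from \cite{GLW} and does not itself prove it, so there is no in-paper proof to compare against; I am evaluating your sketch on its own terms. The easy parts are sound: $N_T V_1 \subseteq V_1$ together with nilpotency of $N_T$ forces $V_1 \subseteq \ker N_T = \vspan\{f_{T(a,\Lambda_a)} : a \in [s]\}$, and the leading-term condition pins down $i$ via $w(1) = T(i,\Lambda_i)$; the $i-1$ free coordinates of $v_1$ are exactly $\beta_{w(1),T(i',\Lambda_{i'})}$ for $i'<i$ by the fourth Schubert-compatibility axiom (right-edge labels increase with row index); and the admissibility argument is correct once you observe that the leading term of $N_T v_p$ really is $f_{T(a,b+1)}$ --- this uses the last Schubert-compatibility axiom (that $T(a,b) > T(c,d)$ implies $T(a,b-1) > T(c,d-1)$, equivalently that $N_T$ preserves the order of leading indices), and you should state that explicitly rather than leave it implicit.

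The genuine gap is exactly the one you flag. Your plan for $i>\ell(\lambda)$ --- ``constructing an $N_T$-equivariant reduction of $\bC^K$ compatible with $V_\bullet$'' --- restates the goal without providing a construction. In that case $T^{(i)}$ deletes $s$ cells (the cell $(i,\Lambda_i)$ together with $(i',1)$ for all $i'\neq i$), so the reduction must land in a $(K-s)$-dimensional space, while quotienting by $V_1$ only reaches $\bC^{K-1}$. No $N_T$-stable subspace of dimension $s$ containing $V_1$ does the job in general, because the first-column vectors $f_{T(i',1)}$, $i'\neq i$, need not lie in $V_n$ and are not annihilated by $N_T$; one must instead argue directly with the equations cutting out $C_w\cap Y_{N_T}$ inside the Schubert coordinates, using admissibility and the containment $N_T^{n-k}\bC^K\subseteq V_n$ to eliminate the coordinates attached to the deleted cells, and then identify the residual system with the one defining $C_{\fl^{(i)}_T(w)}\cap Y_{N_{T^{(i)}}}$. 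You also leave unverified that $T^{(i)}$ is again Schubert compatible (which your induction scheme needs in order to recurse) and that the row-reordering built into the definition of $T^{(i)}$ is consistent with the coordinate identification you perform; both points require checking, even in the $i\leq\ell(\lambda)$ case.
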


\begin{remark}
  The proof of Lemma~\ref{lem:CellRecursion} given in~\cite{GLW} can be used without change to show that for any field $\bF$, there is a bijection
  \begin{equation}\label{eq:CellRecursionF}
    C_w\cap Y_{N_T}(\bF)\cong \bF^{i-1} \times (C_{\fl_T^{(i)}(w)} \cap Y_{N_{T^{(i)}}}(\bF)).
  \end{equation}
  We use the symbol $\cong$ throughout the paper to mean a bijection between sets, but all bijections below are easily seen to be isomorphisms of complex algebraic varieties in the case when $\bF=\bC$. Furthermore, the term `isomorphism' should be translated as `bijection' in the general setting and as `isomorphism of complex algebraic varieties' in the case $\bF=\bC$.
\end{remark}

A \textbf{partial row-decreasing filling} of $[\Lambda]$ is a filling of a subset of size $n$ of the cells of $[\Lambda] = [\Lambda(n,\la,s)]$ with positive integers such that the filled cells are right justified in each row, the labeling weakly decreases along each row, and each cell of $[\lambda]^r$ is filled.

Given $w$ admissible with respect to $T$, let $\PRD_T(w)$ be the partial row-decreasing filling of $[\Lambda]$ such that, for $1\leq i\leq n$, if $w(i) = T(a,b)$, then the cell $(a,b)$ of $[\Lambda]$ is labeled by $i$. It can be checked that the map $\PRD_T$ gives a bijection between admissible $w$ and partial row-decreasing fillings whose labels are  $1,2,\dots,n$ without repeats, hence these fillings also index the cells of $Y_{N_T}$. See Figure~\ref{fig:IPRD-bij} for an example of $\PRD_T(w)$ for $T$ the reading order filling of $[\Lambda(7,(2,2),4)]$ and $w = 2713594$ (where we have listed out the images of $1,2,\dots,7$ in order).

\begin{figure} 
  \centering
  \includegraphics[scale=0.5]{Figures/RowDecreasingBijection.pdf} 
  \caption{For $T$ as in Figure~\ref{fig:SchubCompatExample}, the partial row-decreasing filling $\PRD_T(w)$ associated to the admissible function $w=2713594$.\label{fig:IPRD-bij}} 
\end{figure}

\subsection{Monomial symmetric function formulas}

In~\cite{GLW}, it is shown that the map on cohomology induced by inclusion of varieties
\[
  H^*(\cB^{(1^n)}(\bC^K);\bQ) \twoheadrightarrow H^*(Y_{n,\la,s}(\bC);\bQ)
\]
is surjective. There is a well-defined $S_n$ action on $H^*(Y_{n,\la,s}(\bC);\bQ)$ which is the unique one that makes the above map $S_n$-equivariant. Thus, $H^*(Y_{n,\la,s}(\bC);\bQ)$ has the structure of a graded $S_n$-module. In~\cite{GriffinOSP}, several formulas for the graded Frobenius characteristic of $H^*(Y_{n,\la,s}(\bC);\bQ)$ are given, which we recall next.

For $\mu\vdash n$, let $\PRD_{n,\la,s}^\mu$ be the set of partial row-decreasing fillings of $\Lambda = \Lambda(n,\la,s)$ with $\mu_i$ many $i$'s. Given such a labeling $\varphi\in \PRD_{n,\la,s}^\mu$, let $\varphi_{i,j}$ be the label in cell $(i,j)$. Given a pair of cells $((i,j)$, $(p,q))$ of $[\Lambda]$, we say they are an \textbf{attacking pair} if either $j=q$ and $i<p$, or if $j = p+1$ and $i > p$.

\begin{definition}
  Given $\varphi\in \PRD_{n,\la,s}^\mu$, a \textbf{diagonal inversion} of $\varphi$ is an attacking pair $((i,j),(p,q))$ of cells of $[\Lambda]$ such that one of the following hold,
  \begin{enumerate}
  \item [(D1)] $(i,j)$ and $(p,q)$ are filled such that $\varphi_{i,j} > \varphi_{p,q}$,
  \item[(D2)] $(i,j)$ is not filled and $(p,q)$ is filled.
  \end{enumerate}
  Let $\dinv(\varphi)$ be the number of diagonal inversions of $\varphi$.
\end{definition}

For $\varphi = \PRD_T(2713594)$ as in Figure~\ref{fig:IPRD-bij}, $\varphi$ has three diagonal inversions of type (D1); $((1,6),(2,6))$, $((2,5),(1,4))$, and $((1,4),(3,4))$; and three of type (D2); $((2,4),(3,4))$, $((4,4),(1,3))$, and $((2,4),(1,3))$, so $\dinv(\varphi) = 6$.

We have the following restatement of Theorem~\ref{thm:RInvFormula}.

\begin{theorem}[\cite{GLW}]\label{thm:GLWMainThm}
  We have
  \[
    \Frobq{H^*(Y_{n,\la,s}(\bC);\bQ)} = \sum_{\mu\vdash n} \sum_{\varphi\in \PRD_{n,\la,s}^\mu} q^{\dinv(\varphi)} m_\mu(\bx).
  \]
\end{theorem}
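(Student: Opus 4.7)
The plan is to reduce the computation of $\Frobq{H^*(Y_{n,\la,s};\bQ)}$ to a cell count on a partial flag variety, using the standard identity
\[
  \langle \Frob(V;q), h_\mu \rangle = \Hilb(V^{S_\mu};q)
\]
for a graded $S_n$-module $V$. Since $\langle h_\mu, m_\la\rangle = \delta_{\mu\la}$, this shows that the coefficient of $m_\mu(\bx)$ in $\Frob(V;q)$ equals $\Hilb(V^{S_\mu};q)$. Applied to $V = H^*(Y_{n,\la,s}(\bC);\bQ)$, the first step is to identify the $S_\mu$-invariants geometrically via an analog of the Borho--MacPherson isomorphism \eqref{eq:BMIso}:
\[
  H^*(Y_{n,\la,s}(\bC);\bQ)^{S_\mu} \cong H^*(Y^\mu_{n,\la,s}(\bC);\bQ),
\]
where $Y^\mu_{n,\la,s}\coloneqq \pi^\mu(Y_{n,\la,s})\subseteq \cB^\mu(\bC^K)$.

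The next step is to produce an affine paving of $Y^\mu_{n,\la,s}$ whose cells are indexed by $\PRD^\mu_{n,\la,s}$. Using that $\pi^\mu$ restricts to an isomorphism on each Schubert cell $C_w$ with $w$ $\mu$-increasing (Section~\ref{subsec:Flags}), the intersections $C_w\cap Y^\mu_{N_T}$ for $\mu$-increasing admissible $w$ should form the cells of such a paving, with $\dim_\bC (C_w\cap Y^\mu_{N_T})=\dim_\bC(C_w\cap Y_{N_T})$. There is a natural bijection between $\mu$-increasing admissible $w$ and $\PRD^\mu_{n,\la,s}$, obtained from $\PRD_T(w)$ by replacing each distinct label lying in the block $\mu[i]$ with the symbol $i$; I would verify that this yields exactly a partial row-decreasing filling with $\mu_i$ many $i$'s, and that every such filling arises uniquely this way.

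The heart of the argument is to show that the complex dimension of $C_w\cap Y_{N_T}$ equals $\dinv(\varphi)$ for the corresponding filling $\varphi\in\PRD^\mu_{n,\la,s}$. By iterating Lemma~\ref{lem:CellRecursion} with $T$ the reading order filling, the dimension decomposes as $\sum_j (i_j-1)$, where $i_j$ is the row removed at the $j$th step of the recursion. I would reorganize this sum into a count of attacking pairs of types (D1) and (D2), by induction on $s$ (or on $n$): at each recursion step, peeling off the cell $(i,\Lambda_i)$ contributes a term that matches the diagonal inversions involving the label at that cell with cells appearing later in reading order, while the flattening $T\to T^{(i)}$ preserves the residual $\dinv$ on the remaining filling.

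The main obstacle is the combinatorial identification of the recursive dimension with $\dinv$: the statistic $\dinv$ is global, whereas the cell-dimension recursion is local, and the reindexing $T\to T^{(i)}$ reorders rows after deletion, so one must carefully verify that attacking pair structure and types (D1)/(D2) behave well under this reordering. A secondary technical point is to establish the geometric identification $H^*(Y^\mu_{n,\la,s})\cong H^*(Y_{n,\la,s})^{S_\mu}$; while this follows the pattern of \eqref{eq:BMIso}, extending Borho--MacPherson-type arguments to the $\Delta$-Springer setting likely requires a suitable resolution or semismallness property for the projection $\pi^\mu|_{Y_{n,\la,s}}$, and verifying this compatibility is what I expect to be the most delicate geometric input.
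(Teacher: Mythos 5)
This theorem is imported from \cite{GLW} and restated; the present paper offers no proof of it (it is an input, not a result). That said, Section~\ref{sec:FqCounting} of the paper develops machinery closely matching your steps (3) and (4): Lemma~\ref{lem:ProjectionPaving} gives exactly the paving of $Y^\mu_{n,\la,s}$ by cells $C_w\cap Y^\mu_{N_T}$ for $\mu$-increasing admissible $w$; Lemma~\ref{lem:CellDim} computes cell dimensions recursively via an auxiliary statistic $\inv_T(w)$ satisfying $\inv_T(w)-\inv_{T^{(i)}}(\fl^{(i)}_T(w))=i-1$; and Lemma~\ref{lem:InvIsInv} separately exhibits a bijection between $\inv_T$-pairs and diagonal inversions of $\PRD_T(w)$. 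This factorization through $\inv_T$ is worth noting: it decouples the recursion analysis from the $\dinv$-matching, sidestepping exactly the difficulty you flag about tracking attacking pairs through the row-reordering in $T\to T^{(i)}$. Your plan to ``reorganize the sum $\sum_j(i_j-1)$ directly into (D1)/(D2) pairs'' would face that obstruction head-on; introducing the intermediate statistic is the cleaner route.

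The more serious issue is your step (2). Within this paper's logical architecture, the Borho--Macpherson-type isomorphism $H^*(Y^\mu_{n,\la,s})\cong H^*(Y_{n,\la,s})^{S_\mu}$ is Corollary~\ref{cor:ProjectionInvariants}, and its proof \emph{uses} Theorem~\ref{thm:Delta-Springer-point-count}, which in turn \emph{uses} Theorem~\ref{thm:GLWMainThm} (the statement you are trying to prove) as its first line. So the argument you propose is circular as things stand: you cannot invoke Corollary~\ref{cor:ProjectionInvariants} as an ingredient in a proof of Theorem~\ref{thm:GLWMainThm}. You correctly identify the BM-type identification as the ``most delicate geometric input,'' but the paper does not supply an independent proof of it --- e.g., via semismallness of $\pi^\mu|_{Y_{n,\la,s}}$ or a functoriality argument for the $S_n$-action as in the Springer fiber case. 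Filling this in would require genuinely new work; absent that, what you have is not a proof but a reduction of Theorem~\ref{thm:GLWMainThm} to the (a priori unproved) BM-type isomorphism, together with the combinatorial identification of cell dimensions with $\dinv$. The latter half of your outline is sound and matches Corollary~\ref{cor:CellDimInv}; the former half is the gap.
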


\begin{remark}
In Corollary~\ref{cor:CellDimInv}, we show that the statistic $\dinv$ counts the dimensions of the cells of $Y_{N_T}$ when $T$ is the reading order of $[\Lambda]$. There is another inversion statistic $\inv$ defined in~\cite{GriffinOSP,GLW} that also gives a monomial expansion of the graded Frobenius characteristic of $H^*(Y_{n,\la,s}(\bC);\bQ)$. We do not define it here, because it does not immediately come from a Schubert compatible filling of the cells of $[\Lambda]$. However, it may still be possible to show it counts dimensions of cells of $Y_{N_T}$ for some choice of filling $T$.
\end{remark}

\section{Counting $\Fq$-points of projected $\Delta$-Springer varieties}\label{sec:FqCounting}

In this section, we analyze the projections of the $\Delta$-Springer variety to other partial flag varieties. We then show that the graded Frobenius characteristic of $H^*(Y_{n,\la,s}(\bC);\bQ)$ can be written in terms of counting $\Fq$ points of these projected varieties.

Fix $n,\la,s,$ and $k=|\la|$ as in Subsection~\ref{subsec:AffinePaving}, let $\Lambda \coloneqq \Lambda(n,\la,s)$, and let $K \coloneqq |\Lambda| = k + (n-k)s = n+(n-k)(s-1)$.

\begin{definition}
  Let $\mu$ be a strong composition of $n$, and let $N_\Lambda$ is a nilpotent matrix of type $\Lambda$. Define the \textbf{projected $\Delta$-Springer variety} to be
  \begin{align}\label{eq:ProjectionOfDeltaSpringer}
  Y_{n,\la,s}^\mu(\bF) = \{V_\bullet \in \cB^\mu(\bF^K) \st N_\Lambda V_i\subseteq V_i \text{ for }i\leq \ell(\mu), \text{ and }N_\Lambda^{n-k}\bF^K\subseteq V_{\ell(\mu)}\}.
\end{align}
\end{definition}

\begin{lemma}
We have
\[
  Y_{n,\la,s}^\mu\coloneqq \pi^\mu\left(\Spalt_{\Lambda}^{(1^n,(s-1)^{n-k})}\right),
\]
where $\pi^\mu : \cB^{(1^n,(s-1)^{n-k})}\to \cB^\mu(\bF^K)$ is the usual projection map of partial flag varieties.
\end{lemma}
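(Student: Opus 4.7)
The plan is to factor the projection $\pi^\mu\colon \cB^{(1^n,(s-1)^{n-k})}\to \cB^\mu(\bF^K)$ through the forgetful map $\pi\colon \cB^{(1^n,(s-1)^{n-k})}\to \cB^{(1^n)}(\bF^K)$ that drops the last $n-k$ parts of the flag, so that $\pi^\mu = \rho^\mu\circ \pi$, where $\rho^\mu\colon \cB^{(1^n)}(\bF^K)\to \cB^\mu(\bF^K)$ is the standard projection. Then by \eqref{eq:ProjectionDef} we have
\[
\pi^\mu\bigl(\Spalt_{\Lambda}^{(1^n,(s-1)^{n-k})}\bigr) = \rho^\mu\bigl(Y_{n,\la,s}\bigr),
\]
so the task reduces to showing that $\rho^\mu(Y_{n,\la,s})$ equals the right-hand side of \eqref{eq:ProjectionOfDeltaSpringer}.

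The forward inclusion is immediate: if $W_\bullet\in Y_{n,\la,s}$ and $V_i := W_{\mu_1+\cdots+\mu_i}$, then $N_\Lambda W_j\subseteq W_j$ for all $j$ gives $N_\Lambda V_i\subseteq V_i$ for all $i\leq \ell(\mu)$, while $N_\Lambda^{n-k}\bF^K\subseteq W_n=V_{\ell(\mu)}$ holds by definition of $Y_{n,\la,s}$.

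For the reverse inclusion, suppose $V_\bullet\in \cB^\mu(\bF^K)$ satisfies $N_\Lambda V_i\subseteq V_i$ for all $i$ and $N_\Lambda^{n-k}\bF^K\subseteq V_{\ell(\mu)}$. I claim the fiber $(\rho^\mu)^{-1}(V_\bullet)\cap Y_{n,\la,s}$ is nonempty, which will finish the proof. A complete flag lying over $V_\bullet$ amounts to an independent choice, in each quotient $V_i/V_{i-1}$, of a complete flag refining the trivial flag; the extra $N_\Lambda$-stability condition is that this refinement be stable under the nilpotent endomorphism $N_i$ induced by $N_\Lambda$ on $V_i/V_{i-1}$ (which is well-defined since $N_\Lambda V_i\subseteq V_i$ and $N_\Lambda V_{i-1}\subseteq V_{i-1}$). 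Thus
\[
(\rho^\mu)^{-1}(V_\bullet)\cap Y_{n,\la,s} \;\cong\; \cB_{N_1}(\bF)\times\cdots\times \cB_{N_{\ell(\mu)}}(\bF),
\]
where on the right are the full Springer fibers for the $N_i$, and we have used that $N_\Lambda^{n-k}\bF^K\subseteq V_{\ell(\mu)}=W_n$ is automatic once we set $W_n=V_{\ell(\mu)}$.

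The main (minor) obstacle is the nonemptiness of these Springer fibers over the base field $\bF$, which may not be algebraically closed in our application to $\bF_q$-point counts. This is however classical: for any nilpotent operator $N$ on a finite-dimensional vector space over any field, one can produce an $N$-stable complete flag by iteratively choosing a codimension-one $N$-stable subspace (for instance, one contained in the kernel of a nonzero linear functional vanishing on the image of $N$, or more concretely by passing to the rational canonical form of $N$ and reading off the obvious flag). Hence each $\cB_{N_i}(\bF)$ is nonempty, the product is nonempty, and the required lift $W_\bullet$ exists, completing the proof.
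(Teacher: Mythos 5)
Your proof is correct and follows essentially the same route as the paper's: factor $\pi^\mu = \rho^\mu\circ\pi$, reduce via \eqref{eq:ProjectionDef} to showing $\rho^\mu(Y_{n,\la,s})$ equals the right-hand side of \eqref{eq:ProjectionOfDeltaSpringer}, note the forward inclusion is immediate, and for the reverse identify the fiber $(\rho^\mu)^{-1}(V_\bullet)\cap Y_{n,\la,s}$ with a product of Springer fibers $\cB_{N_1}\times\cdots\times\cB_{N_{\ell(\mu)}}$ that is nonempty. You add one genuinely useful remark that the paper elides: since the lemma is applied over $\bF=\bF_q$ rather than an algebraically closed field, one should justify that $\cB_{N_i}(\bF)$ is nonempty over any field, which you do correctly (every nilpotent endomorphism of a finite-dimensional $\bF$-vector space admits an invariant complete flag).
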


\begin{proof}
Let $\rho^\mu : \cB^{(1^n)}(\bF^K)\to \cB^\mu(\bF^K)$ be the usual projection map, which factors through $\pi^\mu$.  By~\eqref{eq:ProjectionDef}, it is immediate that $Y_{n,\la,s}^\mu = \rho^\mu(Y_{n,\la,s})$, which is contained in the right-hand side of \eqref{eq:ProjectionOfDeltaSpringer}. For the other containment, it is necessary to show that any $V_\bullet\in \cB^\mu(\bF^K)$ on the right-hand side can be extended to an element of $Y_{n,\la,s}$. This follows from the fact that
  \[
    (\rho^\mu)^{-1}(V_\bullet)\cap Y_{n,\la,s} \cong \cB_{N_1}\times \cdots \times\cB_{N_\ell},
  \]
  where $N_i$ is the nilpotent operator induced by $N_\Lambda$ on $V_i/V_{i-1}$. Indeed, since each of the Springer fibers in the product is nonempty, then $V_\bullet$ can be extended to an element $W_\bullet\in Y_{n,\la,s}$ so that $\rho^\mu (W_\bullet) = V_\bullet$. Thus, we have equality of the two sets.
\end{proof}

For $T:[\Lambda]\to [K]$ a bijection, we denote by $Y_{N_T}^\mu$ the projected $\Delta$-Springer variety $Y_{n,\la,s}^\mu$ for the specific choice of nilpotent $N_T$, and similarly for $N_T^t$. 

 \begin{lemma}\label{lem:ProjectionPaving}
 Let $T$ be Schubert compatible and $w$ admissible with respect to $T$.  The projection $\rho^\mu : Y_{N_T} \to Y_{N_T}^\mu$ maps $C_w\cap Y_{N_T}$ to $C_{\sort_\mu(w)}\cap Y_{N_T}^\mu$. When $w$ is $\mu$-increasing, $C_w\cap Y_{N_T}$ maps isomorphically onto $C_w\cap Y_{N_T}^\mu$.
Thus, when $\bF=\bC$ the subspaces $C_w \cap Y_{n,\la,s}^\mu(\bC)$ for $w$ admissible and $\mu$-increasing are the cells of an affine paving of $Y_{n,\la,s}^\mu(\bC)$.
\end{lemma}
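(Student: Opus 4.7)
The plan is to establish the three claims in order. The containment $\rho^\mu(C_w \cap Y_{N_T}) \subseteq C_{\sort_\mu(w)} \cap Y_{N_T}^\mu$ is formal, combining the standard Schubert-cell fact recorded in Subsection~\ref{subsec:Flags} that $\rho^\mu$ carries $C_w \subset \cB^{(1^n)}(\bF^K)$ into $C_{\sort_\mu(w)} \subset \cB^\mu(\bF^K)$ with the inclusion $\rho^\mu(Y_{N_T}) \subseteq Y_{N_T}^\mu$ from the preceding lemma. For the isomorphism claim with $\mu$-increasing $w$, injectivity is immediate since $\rho^\mu|_{C_w} : C_w \to C_w$ is already a bijection for such $w$, so the remaining task is to establish surjectivity onto $C_w \cap Y_{N_T}^\mu$.

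Given $V_\bullet \in C_w \cap Y_{N_T}^\mu$, I would take its unique lift $V'_\bullet \in C_w$ in the complete flag variety and verify both defining conditions of $Y_{N_T}$. The condition $N_T^{n-k}\bF^K \subseteq V'_n$ is immediate from $V'_n = V_{\ell(\mu)}$, so the content lies in the Springer-type conditions $N_T V'_p \subseteq V'_p$ for every $p \leq n$. Here I would argue by induction on $p$ within each block $\mu[i] = \{q+1, \ldots, q+\mu_i\}$ with $q = \mu_1 + \cdots + \mu_{i-1}$, reducing to the claim that $N_T v_p \in V'_p$ given the partial-flag conditions $N_T V_{i-1} \subseteq V_{i-1}$ and $N_T V_i \subseteq V_i$. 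Since $T$ is row-decreasing, the matrix of $N_T$ is strictly lower triangular in the standard basis, so $N_T v_p$ has support contained in $\{f_l : l < w(p)\}$. Row reduction on the $f$-basis in decreasing order expresses $N_T v_p \in V_i = \vspan\{v_1, \ldots, v_{q+\mu_i}\}$ as a combination $\sum c_r v_r$ where every contributing $v_r$ satisfies $w(r) < w(p)$. The $\mu$-increasing property on $\mu[i]$ then forces $r < p$ whenever $r \in \mu[i]$, while any contributing index $r \leq q$ has $v_r \in V_{i-1} \subseteq V'_{p-1}$. Together these put $N_T v_p \in V'_{p-1} \subseteq V'_p$, completing the induction.

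The paving assertion then follows because each $C_w \cap Y_{n,\la,s}^\mu(\bC)$ with $w$ admissible and $\mu$-increasing is now known to be an affine space. To verify coverage of $Y_{n,\la,s}^\mu(\bC)$, I would take an arbitrary point, lift it via the surjection $\rho^\mu : Y_{N_T} \to Y_{N_T}^\mu$ to some $V'_\bullet \in C_{w'} \cap Y_{N_T}$ with $w'$ admissible (so that such a lift exists by Lemma~\ref{lem:CellRecursion}), and then check that $\sort_\mu(w')$ is also admissible. This last check is a short case analysis on the relative block positions of $T(a,b)$ and $T(a,b+1)$ in the image of $w'$: if both lie in the same block of $\mu$, the row-decreasing property of $T$ puts $T(a,b+1)$ at an earlier sorted position, and if $T(a,b+1)$ lies in an earlier block than $T(a,b)$, sorting places it earlier automatically. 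The closure-compatible filtration then arises by restricting the standard Bruhat filtration of $\cB^\mu(\bC)$ to $Y_{n,\la,s}^\mu(\bC)$. The main obstacle of the proof is the row-reduction step above: the $\mu$-increasing hypothesis on $w$ is precisely what prevents any $v_r$ with $r \geq p$ in the current block from appearing in the expansion of $N_T v_p$, and this is where the Schubert compatibility of $T$, admissibility of $w$, and $\mu$-increasingness combine non-trivially.
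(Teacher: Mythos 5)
Your proof is correct and follows essentially the same approach as the paper's: both arguments lift a point of $C_w\cap Y_{N_T}^\mu$ to the complete flag variety via the standard coordinates and verify $N_TW_p\subseteq W_p$ by showing that, because $N_T$ is strictly lower triangular (equivalently, because the leading term of $N_Tv_p$ lies below $f_{w(p)}$) and $w$ is $\mu$-increasing, the echelon expansion of $N_Tv_p$ involves no $v_r$ with $r\geq p$. The only stylistic divergence is in the paving step, where you argue coverage by lifting and checking admissibility of $\sort_\mu(w')$, whereas the paper simply observes that the $\mu$-increasing Schubert cells already partition $\cB^\mu(\bC^K)$ and hence partition $Y_{n,\la,s}^\mu(\bC)$; both are valid, and your route has the minor virtue of making explicit that non-admissible $\mu$-increasing $w$ give empty intersections.
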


\begin{proof}
  The first part of the lemma is immediate from the fact that $\rho^\mu$ maps $C_w$ to $C_{\sort_\mu(w)}$ and the definition of $Y_{N_T}^\mu$. Let $w$ be $\mu$-increasing. Since $\rho^\mu:\cB^{(1^n)}(\bF^K)\to \cB^{\mu}(\bF^K)$ maps $C_w$ isomorphically onto $C_w\subseteq \cB^\mu(\bF^K)$, then the restriction of $\rho^\mu$ to $C_w\cap Y_{N_T}$ maps $C_w\cap Y_{N_T}$ isomorphically onto its image, so it suffices to show that $C_w\cap Y_{N_T}$ maps surjectively onto $C_w\cap Y_{N_T}^\mu$.

  Let $V_\bullet\in C_w\cap Y_{N^T}^\mu$. For each $i$, let $v_1,\dots,v_n$ be the vectors such that for $p\in \mu[i]$, we have $v_p\in V_i\setminus V_{i-1}$, $v_p$ has leading term $f_{w(p)}$, and
\begin{equation}
v_{p} =f_{w(p)}+\sum_{h=1}^{w(p)-1} \beta_{w(p),h}\, f_h,
\end{equation}
for some $\beta_{w(p),h}$ such that $\beta_{w(p),h}=0$ if $h\in\{w(1),\ldots,w(p-1)\}$. We claim that the partial flag $W_\bullet\in \cB^{(1^n)}(\bF^K)$ defined by $W_i = \vspan\{v_1,\dots, v_i\}$ for $i\leq n$ is in $Y_{N_T}$. Indeed, since $N_T V_i\subseteq V_i$, then $N_TW_{\mu_i}\subseteq W_{\mu_i}$ for all $i$. For $\mu_1+\cdots+\mu_{i-1}+1 \leq p\leq \mu_1+\cdots+\mu_i$, then $N_Tv_p\in W_{\mu_i}$. Let $(a,b)$ be the coordinates of the label $w(p)$ in $T$, so $w(p) = T(a,b)$. Since $T$ is Schubert compatible, either $N_Tv_p=0$ or the leading term of $N_Tv_p$ is $N_Tf_{w(p)}=f_{T(a,b+1)}$. In the latter case, since $w$ is admissible with respect to $T$, then $T(a,b+1)=w(p')$ for some $p'<p$. Since $w$ is $\mu$-increasing we have $w(p')\notin \{p+1,\dots, \mu_i\}$, so the expansion of $N_Tv_p$ into the $v_i$ vectors cannot have any terms with nonzero coefficient on $v_{p+1},\dots, v_{\mu_i}$ (otherwise the leading term would not be $f_{w(p')}$), hence $N_Tv_p\in W_p$. Therefore, we have the containment $N_TW_p\subseteq W_p$ for all $p$.

Since $N_T^{n-k}\bF^{|\Lambda|} \subseteq V_{\ell(\mu)} = W_n$, then we have $W_\bullet\in Y_{n,\la,s}$ and by construction $\rho^\mu(W_\bullet) = V_\bullet$, so $\rho^\mu$ maps $C_w\cap Y_{n,\la,s}$ surjectively, hence isomorphically, onto $C_w\cap Y_{n,\la,s}^\mu$.

When $\bF=\bC$, since $C_w\cap Y_{n,\la,s}$ is isomorphic to affine space, then $C_w\cap Y_{n,\la,s}^\mu$ is as well. Since the $C_w$ are cells of an affine paving of $\cB^{\mu}(\bC^K)$, then the intersections $C_w\cap Y_{n,\la,s}^\mu(\bC)$ for $\mu$-increasing $w$ are the cells of an affine paving of $Y_{n,\la,s}^\mu(\bC)$.
\end{proof}

Recall that for $T$ Schubert compatible and $w$ admissible with respect to $T$, $\PRD_T(w)$ is defined to be the row-decreasing filling of $[\Lambda]$ such that if $w(i) = T(p,q)$, then the cell $(p,q)$ is labeled by $i$. Equivalently, if we let $\PRD_T(w)^{-1}(a)$ be the position of the label $a$ in $\PRD_T(w)$, we have
\begin{align}\label{eq:ShortCutLabel}
  w(a) = T(\PRD_T(w)^{-1}(a)).
\end{align}

\begin{definition}
  For $T$ Schubert compatible and $w$ admissible with respect to $T$, an \textbf{inversion of $w$ with respect to $T$} is a pair $(c,i)$ with $1\leq c\leq n$, $1\leq i\leq s$ such that one of the following conditions holds:
  \begin{itemize}
  \item[(IT1)] There exists a label $\ell>c$ of $\PRD_T(w)$ in row $i$ such that $w(\ell) < w(c)$,
  \item[(IT2)] The condition (IT1) does not hold, and there exists an empty cell $(i,j)$ of $\PRD_T(w)$ in row $i$ such that $T(i,j) < w(c)$.
  \end{itemize}
  Define $\inv_T(w)$ to be the number of such pairs.
\end{definition}

\begin{comment}
\begin{remark}
  We have the following alternative description of the inversions of $w$ with respect to $T$.
  Given $w$ an injective function $w: [n]\to [K]$ admissible with respect to $T$, $\inv_T(w)$ is equal to the number of pairs $(c,r)$ with $1\leq c\leq n$ and $1\leq r\leq K$ such that one of the following conditions holds:
  \begin{itemize}
  \item[(IT1)] $r< w(c)$, $w^{-1}(r) > c$, and such that either $r$ is on the right edge of $T$, or we have $Ne_r = e_\ell$ and $w^{-1}(\ell) < c$,
  \item[(IT2)] $T(r,n-k)\notin \Im(w)$ and $T(r,n-k) < w(c)$.
\tcr{Something isn't right...}
  \end{itemize}
\end{remark}
\end{comment}

\begin{lemma}\label{lem:CellDim}
  Let $T$ be a Schubert-compatible filling of $[\Lambda]$. If $w$ is admissible with respect to $T$ and $\mu$-increasing, then
  \[
    C_w \cap Y_{N_T}^\mu \cong \bF^{\inv_T(w)}.
  \]
\end{lemma}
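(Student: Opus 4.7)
My plan is to reduce the statement to the analogous one on the complete-flag variety and then induct on $|\Lambda|$ using Lemma \ref{lem:CellRecursion}. Since $w$ is $\mu$-increasing, Lemma \ref{lem:ProjectionPaving} provides a bijection $C_w \cap Y_{N_T} \cong C_w \cap Y_{N_T}^\mu$, so it is enough to prove
\[
  C_w \cap Y_{N_T} \cong \bF^{\inv_T(w)}
\]
for every admissible $w$, now without the $\mu$-increasing hypothesis. In the base case $n = 0$, both sides reduce to a single point.

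For the inductive step, Lemma \ref{lem:CellRecursion} gives, for the row index $i$ produced by the recursion there,
\[
  C_w \cap Y_{N_T} \cong \bF^{\,i-1} \times \bigl(C_{\fl_T^{(i)}(w)} \cap Y_{N_{T^{(i)}}}\bigr).
\]
Since $T^{(i)}$ is Schubert compatible on a strictly smaller Young diagram and $\fl_T^{(i)}(w)$ is admissible with respect to $T^{(i)}$, the inductive hypothesis identifies the second factor with $\bF^{\inv_{T^{(i)}}(\fl_T^{(i)}(w))}$. The lemma then reduces to the combinatorial identity
\[
  \inv_T(w) = (i-1) + \inv_{T^{(i)}}\bigl(\fl_T^{(i)}(w)\bigr).
\]

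I would verify this identity by partitioning the inversion pairs $(c, r)$ of $w$ with respect to $T$ into those whose label $c$ is removed when forming $T^{(i)}$ and those whose label survives under $\fl_T^{(i)}$. The first class should contribute exactly $i - 1$ inversions, and I expect these to correspond cell-for-cell to the free linear-algebraic parameters $\beta_{T(i,\Lambda_i),h}$ (for $h$ ranging over the leading-term positions $T(i', \Lambda_{i'})$ with $i' < i$) that produce the $\bF^{i-1}$ factor in the recursion. The second class should biject with the inversion pairs of $\fl_T^{(i)}(w)$ with respect to $T^{(i)}$, using that $\fl_T^{(i)}$ is order preserving so that the inequalities $w(\ell) < w(c)$ and $T(r,j) < w(c)$ appearing in (IT1) and (IT2) are preserved, and that the row reordering step merely relabels rows without altering the attacking-pair structure of empty cells.

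The main obstacle will be the case $i > \ell(\la)$, where $T^{(i)}$ is formed by deleting not only $(i, \Lambda_i)$ but also the cells $(i', 1)$ for every $i' \neq i$, before shifting those rows leftward and reordering them. Here one must check carefully that the contributions of (IT2) coming from empty cells in the deleted leftmost column exactly match the $i-1$ linear degrees of freedom appearing at this step of the recursion, and that no inversions are spuriously created or destroyed by the row-reordering. Tracking empty cells against the reading order of $[\Lambda]$ versus the reading order of the smaller diagram for $T^{(i)}$ appears to be the only delicate bookkeeping required before the induction closes.
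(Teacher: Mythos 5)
Your overall strategy matches the paper's: reduce to the complete-flag case via Lemma~\ref{lem:ProjectionPaving}, induct on $n$ using Lemma~\ref{lem:CellRecursion}, and reduce the lemma to the combinatorial recursion $\inv_T(w) = (i-1) + \inv_{T^{(i)}}\bigl(\fl_T^{(i)}(w)\bigr)$. That part is correct and is exactly what the paper does.

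The gap is that you never actually prove the first class has exactly $i-1$ elements; you write that it ``should contribute exactly $i-1$ inversions'' and that you ``expect these to correspond'' to the coordinates $\beta_{T(i,\Lambda_i),h}$, but this is a heuristic, not an argument. The paper closes this gap cleanly and without reference to the cell coordinates: noting $w(1)=T(i,\Lambda_i)$, the inversions that vanish under flattening are precisely those of the form $(1,i')$, and one shows $(1,i')$ is an inversion if and only if $i'<i$. For $i'<i$, Schubert compatibility (fourth bullet of Definition~\ref{def:SchubCompat}) gives $T(i',\Lambda_{i'})<T(i,\Lambda_i)=w(1)$, and then $(1,i')$ is an inversion of type (IT1) or (IT2) according to whether $(i',\Lambda_{i'})$ is filled or empty in $\PRD_T(w)$. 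For $i'\geq i$, if $(1,i')$ were an inversion there would be a cell $(i',j')$ with $T(i',j')<w(1)$, but Schubert compatibility forces $T(i,\Lambda_i)\leq T(i',\Lambda_{i'})\leq T(i',j')$, a contradiction. This argument is uniform and does not require a separate treatment of $i>\ell(\lambda)$, which sidesteps the ``main obstacle'' you flag: the delicacy you worry about lives entirely in the bijection for $c>1$, which both you and the paper leave as a routine (if somewhat tedious) verification. You should drop the appeal to linear-algebraic degrees of freedom and instead run this direct Schubert-compatibility count; once you do, the rest of your sketch closes the induction.
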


\begin{proof}
By Lemma~\ref{lem:ProjectionPaving}, it suffices to prove the result for the case $\mu = (1^n)$.
  We proceed by induction on $n$. In the base case when $n=0$ (and $s$ is arbitrary), then $\la= \emptyset$, and the unique admissible $w$ is the empty function $w=\emptyset$. Furthermore, $C_\emptyset\cap Y_{N_T} = Y_{N_T}$ is a single point and $\inv_T(w) = 0$. Thus, the base case holds.

Let $n\geq 1$ and $T$ and $w$ be arbitrary with $w(1) = T(i,\Lambda_i)$. To complete the induction, by Lemma~\ref{lem:CellRecursion} it suffices to show
  \begin{align}\label{eq:InvRecursion}
    \inv_T(w) - \inv_{T^{(i)}}\left(\fl_T^{(i)}(w)\right) = i-1.
  \end{align}
It can be checked using the definition of $\inv_T(w)$ that the left-hand side of \eqref{eq:InvRecursion} is the number of inversions of $w$ with respect to $T$ that are of the form $(1,i')$. We claim that $(1,i')$ is an inversion if and only if $i'<i$. Indeed, if $i'<i$, then $T(i',\Lambda_{i'}) < T(i,\Lambda_i)=w(1)$ since $T$ is Schubert compatible. If $(i',\Lambda_{i'})$ is filled in $\PRD_T(w)$, let its label be $\ell$. Since $w$ is injective, then $\ell>1$. Furthermore, $w(\ell) = T(\PRD_T(w)^{-1}(\ell)) = T(i',\Lambda_{i'}) < w(1)$, thus $(1,i')$ is an inversion of type (IT1). Otherwise, $(i',\Lambda_{i'})$ is not filled in $\PRD_T(w)$, so since $T(i',\Lambda_{i'}) < w(1)$, then $(1,i')$ is an inversion of type (IT2).

  Suppose $(1,i')$ is an inversion for some $i'\geq i$. Then in either of the cases (IT1) or (IT2), there exists a cell $(i',j')$ in row $i'$ such that $T(i',j') < w(1) = T(i,\Lambda_i)$ (where $T(i',j') = w(\ell)$ in the (IT1) case). By Schubert compatibility, we have $T(i,\Lambda_i) \leq T(i',\Lambda_{i'}) \leq T(i',j')$, a contradiction. Therefore, $(1,i')$ is an inversion if and only if $i'<i$, so the number of inversions counted by the left-hand side of \eqref{eq:InvRecursion} is $i-1$. The inductive step is thus complete.
\end{proof}

\begin{lemma}\label{lem:InvIsInv}
  Let $T$ be the reading order filling of $[\Lambda]$.  For all $w$ admissible with respect to $T$, we have $\inv_T(w) = \dinv(\PRD_T(w))$.
  % , and for all $w$ admissible with respect to $T$, we have $\inv_T(w) = \inv(\PRD_T(w))$.
\end{lemma}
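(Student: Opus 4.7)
The plan is to reorganize $\dinv(\PRD_T(w))$ as a sum indexed by pairs $(c,i)$ with $c\in[n]$ and $i\in[s]$, and then match each summand against the indicator of ``(IT1) or (IT2) holds for $(c,i)$''. Write $\varphi=\PRD_T(w)$, and for a label $c$ at cell $(a,b)$ of $\varphi$ and a row $i\neq a$, define the \emph{attacker position}
\[
P(c,i)=\begin{cases}(i,b)&\text{if }i<a,\\ (i,b+1)&\text{if }i>a.\end{cases}
\]
Directly from the definition of attacking pair, $P(c,i)$ is the unique cell of row $i$ that can form an attacking pair with $(a,b)$ as the second cell. Consequently, each (D1) inversion is uniquely charged to the pair (smaller label, row of larger label), with the larger label sitting at $P(c,i)$, and each (D2) inversion is uniquely charged to (label of filled cell, row of empty first cell), with the empty cell at $P(c,i)$. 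This yields the reorganization
\[
\dinv(\varphi)=\sum_{c}\#\bigl\{\,i\neq a:P(c,i)\in[\Lambda]\text{ and }P(c,i)\text{ is either empty or filled with a label }>c\,\bigr\}.
\]

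The key property of the reading-order filling $T$ is that $T(i,j')<T(a,b)$ reduces to the pure column condition $j'>b$, or $j'=b$ with $i<a$. Thus, for $i<a$ the cells of row $i$ whose $T$-values are less than $T(a,b)$ form the right segment $(i,b),\ldots,(i,\Lambda_i)$ with leftmost cell $P(c,i)=(i,b)$; for $i>a$ they form $(i,b+1),\ldots,(i,\Lambda_i)$ with leftmost cell $P(c,i)=(i,b+1)$. Combining this with the right-justification of filled cells of $\varphi$ and the weak left-to-right decreasingness of its labels, I would conclude via a short case analysis: if $P(c,i)\notin[\Lambda]$ the segment is empty and both (IT1) and (IT2) fail; if $P(c,i)$ is filled, right-justification fills the entire segment and row-decreasingness places the maximum label at $P(c,i)$, so (IT1) holds iff that label exceeds $c$ while (IT2) is vacuous; if $P(c,i)$ is empty, then (IT2) is automatically witnessed by $P(c,i)$ itself whenever (IT1) fails, so the indicator is identically $1$. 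In each case the indicator matches the attacker condition appearing in the reorganized $\dinv$. The excluded row $i=a$ contributes $0$ on both sides, since row-decreasingness forbids $\ell>c$ in row $a$ at columns $\geq b$, right-justification forbids empty cells there, and attacking pairs require distinct rows.

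The one subtle point is checking that the reorganization of $\dinv$ is a clean bijection, i.e.\ that each diagonal inversion is charged to a unique pair $(c,i)$; this is immediate from the observation that $P(c,i)$ is the \emph{only} cell of row $i$ attacking $(a,b)$. Everything else is a direct translation using the column order induced by the reading-order filling, and summing the term-by-term match over all $(c,i)$ gives $\inv_T(w)=\dinv(\PRD_T(w))$.
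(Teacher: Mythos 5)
Your proof is correct and follows essentially the same route as the paper: the paper sets up the bijection $((i,j),(p,q))\mapsto(\varphi_{p,q},i)$ between diagonal inversions and $\inv_T$-inversions and exhibits the inverse $(c,i)\mapsto((i,q)\text{ or }(i,q+1),(p,q))$, i.e.\ exactly your attacker position $P(c,i)$, using the same three facts (the cells of row $i$ with smaller $T$-value form the right segment beginning at $P(c,i)$ under reading order, right-justification of $\varphi$, and weak row-decreasingness). Your reorganization of $\dinv(\varphi)$ as a sum over pairs $(c,i)$ just repackages that same bijection as a term-by-term matching, which arguably makes the ``it can be checked that this is the inverse map'' step more explicit, but the underlying combinatorics is identical.
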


\begin{proof}
  %For the case of $\inv$, let $\varphi = \PRD_T(w)$. We have the following bijection between inversions of $\varphi$ and inversions of $w$ with respect to $T$. Each (I1) and (I2) inversion of $\varphi$ of the form $((i,j),(p,q))$ corresponds to the inversion $(\varphi_{p,q},i)$ of $w$ of type (IT1). Indeed, this is an inversion of type (IT1) because, in either case, $\varphi_{i,j}>\varphi_{p,q}$ and $w(\varphi_{i,j}) < w(\varphi_{p,q})$ by definition of inversion reading order. For $(i,(p,q))$ an inversion of type (I3), if there exists a label $\ell > \varphi_{p,q}$ in row $i$ of $\varphi$, then let $(i,(p,q))$ correspond to the type (IT1) inversion $(\varphi_{p,q},i)$. Otherwise, there does not exist such a label in row $i$. In this case, let $(i,j)$ be the right-most cell in row $i$ which is empty in $\varphi$ (which must exist because $(p,q)$ is a labeled cell of $\varphi$ which is neither in row $i$ nor in $[\la]^r$, so row $i$ of $\varphi$ can have at most $n-k-1$ labeled cells). Since $i<p$ and neither $(i,j)$ nor $(p,q)$ are in $[\la]^r$, then $T(i,j)<T(p,q) = w(\varphi_{p,q})$, and by hypothesis either $j=\Lambda_i$ or $\varphi_{i,j+1}< \varphi_{p,q}$. Thus, $(\varphi_{p,q},i)$ is an inversion of type (IT2).
%
  Let $\varphi = \PRD_T(w)$. We define a map from diagonal inversions of $\varphi$ to inversions of $w$ with respect to $T$ as follows. For each diagonal inversion $((i,j),(p,q))$, let its corresponding inversion of $w$ be $(\varphi_{p,q},i)$. The pair $(\varphi_{p,q},i)$ is indeed an inversion: If $((i,j),(p,q))$ is a type (D1) inversion, then $(\varphi_{p,q},i)$ is a type (IT1) inversion, where $\ell=\varphi_{i,j}$. If $((i,j),(p,q))$ is type (D2), then $(\varphi_{p,q},i)$ is type (IT1) or type (IT2) depending on whether there exists a label $\ell>\varphi_{p,q}$ in row $i$ of $\varphi$ or not, respectively. We claim that the map defined above is a bijection between diagonal inversions of $\varphi$ and inversions of $w$ with respect to $T$.

  To show bijectivity, we give the inverse bijection, as follows. Given $(c,i)$ an inversion of $w$, let $(p,q)$ be the coordinates of the label $c$ in $\varphi$. If $(c,i)$ is of type (IT1), then there exists a label $\ell>c$ in row $i$ of $\varphi$ such that $w(\ell) < w(c)$. Since $w(\ell) < w(c)$, then either $i<p$ and $\ell$ is in a cell above and weakly to the right of $c$, or $i>p$ and $\ell$ is in a cell below and strictly to the right of $c$. In the first case, $(c,i)$ corresponds to the diagonal inversion $((i,q),(p,q))$. Indeed, the cell $(i,q)$ is either empty in $\varphi$, so $((i,q),(p,q))$ is a type (D2) diagonal inversion, or the cell $(i,q)$ is labeled and $\varphi_{i,q} \geq \ell > c$ be the row-decreasing property of $\varphi$ so $((i,q),(p,q))$ is a type (D1) diagonal inversion. In the second case when $i>p$, then $(c,i)$ corresponds to the diagonal inversion $((i,q+1),(p,q))$. Indeed, the cell $(i,q+1)$ is either empty in $\varphi$, so $((i,q+1),(p,q))$ is a type (D2) diagonal inversion, or the cell $(i,q+1)$ is labeled and $\varphi_{i,q+1}\geq \ell > c$ so $((i,q+1),(p,q))$ is a type (D1) diagonal inversion.

  Finally, if $(c,i)$ is of type (IT2), then there exists an empty cell $(i,j)$ in $\varphi$ such that $T(i,j) < w(c)$. Then either $i<p$ and $((i,q),(p,q))$ is a diagonal inversion of type (D2), or $i>p$ and $((i,q+1),(p,q))$ is a diagonal inversion of type (D2). It can then be checked that this is the inverse map, and hence the number of inversions of $w$ with respect to $T$ is equal to the number of diagonal inversions of $\varphi$.
\end{proof}

Combining Lemma~\ref{lem:InvIsInv} with Lemma~\ref{lem:CellDim}, we see that $\dinv$ has geometric meaning: it counts the dimensions of cells in $Y_{N_T}$.
\begin{corollary}\label{cor:CellDimInv}
  For $T$ the reading order filling of $[\Lambda]$ and $w$ that is $\mu$-increasing and admissible with respect to $T$, we have
  \begin{align}
    C_w\cap Y_{N_T}^\mu &\cong \bF^{\dinv(\PRD_T(w))}.
  \end{align}
\end{corollary}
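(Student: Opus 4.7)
The plan is to obtain this statement as an immediate consequence of the two preceding lemmas, with essentially no new work required. Specifically, Lemma~\ref{lem:CellDim} applied to the Schubert-compatible filling $T$ (which includes the reading order filling, as noted in the text) gives an isomorphism
\[
C_w \cap Y_{N_T}^\mu \cong \bF^{\inv_T(w)}
\]
whenever $w$ is $\mu$-increasing and admissible with respect to $T$. On the other hand, Lemma~\ref{lem:InvIsInv}, which is stated precisely for $T$ the reading order filling, asserts the statistic identity $\inv_T(w) = \dinv(\PRD_T(w))$ for every admissible $w$. Substituting the second equality into the exponent of the first yields the desired isomorphism.

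Concretely, the proof I would write is a single sentence: specialize Lemma~\ref{lem:CellDim} to the reading order filling $T$, then apply Lemma~\ref{lem:InvIsInv} to rewrite the exponent $\inv_T(w)$ as $\dinv(\PRD_T(w))$. There is no substantive obstacle here, since both ingredients have already been established. The only minor point to verify is the compatibility of hypotheses: Lemma~\ref{lem:CellDim} requires $w$ admissible and $\mu$-increasing, while Lemma~\ref{lem:InvIsInv} requires only admissibility, so the combined hypotheses match those of the corollary. Thus the corollary follows with a one-line justification, and its real content is simply to record the geometric interpretation of $\dinv$ as the cell dimension in the affine paving of $Y_{N_T}^\mu$ from Lemma~\ref{lem:ProjectionPaving}.
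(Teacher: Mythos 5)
Your proposal is correct and is exactly the paper's approach: the corollary is obtained by substituting the identity $\inv_T(w) = \dinv(\PRD_T(w))$ from Lemma~\ref{lem:InvIsInv} into the cell-dimension formula of Lemma~\ref{lem:CellDim}, and your check that the hypotheses are compatible is the only point worth noting.
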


We are now able to prove the main theorem of this section.

\begin{theorem}\label{thm:Delta-Springer-point-count}
For all prime powers $q$, we have
\[\Frob(H^*(Y_{n,\la,s}(\bC);\bQ);q) = \sum_{\mu\vdash n}|Y_{n,\la,s}^\mu(\Fq)|\, m_\mu(\bx).\]
\end{theorem}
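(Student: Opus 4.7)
The plan is to match coefficients of $m_\mu(\bx)$ on both sides for each $\mu \vdash n$. By Theorem~\ref{thm:GLWMainThm}, the $m_\mu$-coefficient of the left-hand side is $\sum_{\varphi \in \PRD_{n,\la,s}^\mu} q^{\dinv(\varphi)}$, so the task reduces to showing that for every $\mu$,
\[
|Y_{n,\la,s}^\mu(\Fq)| = \sum_{\varphi \in \PRD_{n,\la,s}^\mu} q^{\dinv(\varphi)}.
\]

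First I would count $|Y_{n,\la,s}^\mu(\Fq)|$ by choosing $T$ to be the reading-order filling of $[\Lambda]$ and using $N_T$ as the nilpotent (the point count is independent of this choice, since $GL_K(\Fq)$ acts transitively on nilpotents of Jordan type $\Lambda$). Combining the Bruhat decomposition $\cB^\mu(\Fq) = \bigsqcup_w C_w(\Fq)$ with Lemma~\ref{lem:ProjectionPaving} and the $\bF$-agnostic version of Lemma~\ref{lem:CellRecursion} noted in the Remark, $Y_{n,\la,s}^\mu(\Fq)$ becomes the disjoint union of the intersections $C_w \cap Y_{N_T}^\mu(\Fq)$ over $\mu$-increasing admissible $w$. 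By Corollary~\ref{cor:CellDimInv}, whose ingredients all work over an arbitrary field, each such intersection has exactly $q^{\dinv(\PRD_T(w))}$ elements. Summing gives
\[
|Y_{n,\la,s}^\mu(\Fq)| = \sum_{\substack{w\text{ admissible}\\ \mu\text{-increasing}}} q^{\dinv(\PRD_T(w))}.
\]

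The remaining step is to produce a $\dinv$-preserving bijection between $\PRD_{n,\la,s}^\mu$ and the set of $\mu$-increasing admissible $w$. Given $\varphi \in \PRD_{n,\la,s}^\mu$, I would standardize by replacing the $\mu_i$ cells labeled $i$ with the elements of $\mu[i]$ assigned in reading order, producing a filling $\varphi^{\std}$ with distinct entries $1,\dots,n$; the inverse just forgets within each color class. One checks that $\varphi^{\std} = \PRD_T(w)$ for a unique admissible $w$ which is $\mu$-increasing, precisely because $T$ is the reading order. The essential check is $\dinv$-preservation: type (D2) diagonal inversions depend only on which cells are filled, while for type (D1), if $((i,j),(p,q))$ is an attacking pair then $(i,j)$ precedes $(p,q)$ in reading order, so two cells carrying equal $\varphi$-labels acquire strictly increasing standardized labels (creating no new (D1) inversion), and cells with $\varphi_{i,j} > \varphi_{p,q}$ keep this strict inequality after standardization.

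The main obstacle is this compatibility of standardization with $\dinv$; it works cleanly exactly because the reading order of $[\Lambda]$ is aligned with the attacking-pair orientation, which is what forces the choice of $T$ as the reading-order filling (rather than some other Schubert-compatible filling). Granting this bijection, the two expressions for $|Y_{n,\la,s}^\mu(\Fq)|$ agree, which proves the theorem.
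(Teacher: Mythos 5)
Your proof is correct and follows essentially the same approach as the paper: both reduce to comparing $m_\mu$-coefficients via Theorem~\ref{thm:GLWMainThm}, count $|Y_{N_T}^\mu(\Fq)|$ cell-by-cell using Lemma~\ref{lem:ProjectionPaving} and Corollary~\ref{cor:CellDimInv}, and establish a $\dinv$-preserving bijection between $\PRD_{n,\la,s}^\mu$ and $\mu$-increasing admissible $w$ by exploiting that the reading order is aligned with the attacking-pair orientation. The only cosmetic difference is that the paper runs the bijection from $w$ to $\varphi$ by merging labels rather than standardizing in the other direction, and you make explicit the (correct) point that $|Y_{n,\la,s}^\mu(\Fq)|$ is independent of the choice of nilpotent of type $\Lambda$.
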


\begin{proof}
  Let $T$ be the reading order filling of $[\Lambda]$, and let $\mu\vdash n$. We claim that
  \begin{align}\label{eq:mudinvsum}
    \sum_{\varphi \in \PRD_{n,\la,s}^\mu} q^{\dinv(\varphi)} = \sum_{\substack{w \text{ admissible wrt }T,\\ \mu\text{-increasing}}} q^{\dinv(\PRD_T(w))}.
  \end{align}
  Indeed, given $w$ admissible with respect to $T$ that is $\mu$-increasing, define $\varphi\in \PRD_{n,\la,s}^\mu$ by replacing the labels $\mu_1+\cdots+\mu_{i-1}+1, \mu_1+\cdots +\mu_{i-1}+2,\dots, \mu_1+\cdots +\mu_{i}$ in $\PRD_T(w)$ with $i$ for each $i\leq \ell(\mu)$. Since $w$ is $\mu$-increasing, then the labels $\mu_1+\cdots+\mu_{i-1}+1, \mu_1+\cdots +\mu_{i-1}+2,\dots, \mu_1+\cdots +\mu_{i}$ in $\PRD_T(w)$ are increasing in reading order, so there are no diagonal inversions among these labels. Therefore, it follows from the definition of $\dinv$ that $\dinv(\varphi) = \dinv(\PRD_T(w))$. 

Using \eqref{eq:mudinvsum}, we have  
  \begin{align}
    \Frob(H^*(Y_{n,\la,s}(\bC);\bQ);q)
    &= \sum_{\mu\vdash n} \sum_{\varphi \in \PRD_{n,\la,s}^\mu} q^{\dinv(\varphi)} m_\mu(\bx)\\
    &= \sum_{\mu\vdash n} \,\,\sum_{\substack{w \text{ admissible wrt }T,\\ \mu\text{-increasing}}} |C_w \cap Y_{N_T}^\mu(\Fq)| m_\mu(\bx)\\
    & = \sum_{\mu \vdash n} |Y_{N_T}^\mu(\Fq)| m_\mu(\bx).
  \end{align}
  The first equality is Theorem~\ref{thm:GLWMainThm}, the second follows from Corollary~\ref{cor:CellDimInv} and the fact that $\PRD_T$ restricts to a bijection between the set of $w$ that are admissible and $\mu$-increasing and $\PRD_{n,\la,s}^\mu$, and the third equality follows from the fact that the intersections $C_w\cap Y_{N_T}^\mu$ for $w$ that are admissible with respect to $T$ and $\mu$-increasing partition the space $Y_{N_T}^\mu$ by Lemma~\ref{lem:ProjectionPaving}.
\end{proof}

As a corollary, we obtain the following generalization of Borho and Macpherson's result \eqref{eq:BMIso}.

\begin{corollary}\label{cor:ProjectionInvariants}
  For all $\mu\vdash n$, we have an isomorphism of graded vector spaces
  \begin{align}
    H^*(Y_{n,\la,s}^\mu(\bC);\bQ) \cong H^*(Y_{n,\la,s}(\bC);\bQ)^{S_\mu}.
  \end{align}
\end{corollary}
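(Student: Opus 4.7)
The plan is to compare Hilbert series on both sides, since an isomorphism of finite-dimensional graded vector spaces is determined by equality of Hilbert series, and then invoke Theorem~\ref{thm:Delta-Springer-point-count} to bridge the two.

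First I would compute $\Hilb(H^*(Y_{n,\la,s}^\mu(\bC);\bQ);q)$ directly from the affine paving. By Lemma~\ref{lem:ProjectionPaving}, the intersections $C_w\cap Y_{N_T}^\mu(\bC)$ for $w$ admissible with respect to $T$ and $\mu$-increasing give an affine paving of $Y_{n,\la,s}^\mu(\bC)$, where $T$ is the reading order filling of $[\Lambda]$. By Corollary~\ref{cor:CellDimInv}, each such cell has complex dimension $\dinv(\PRD_T(w))$. Since a compact variety with an affine paving has cohomology whose Poincaré polynomial is the dimension-generating function of the cells,
\[
\Hilb(H^*(Y_{n,\la,s}^\mu(\bC);\bQ);q) = \!\!\!\sum_{\substack{w\text{ admissible wrt }T,\\ \mu\text{-increasing}}}\!\!\! q^{\dinv(\PRD_T(w))}.
\]
By the same cell-count identity applied over $\Fq$ (using \eqref{eq:CellRecursionF} and the $\Fq$ version of Lemma~\ref{lem:CellDim}), this polynomial is exactly $|Y_{n,\la,s}^\mu(\Fq)|$.

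Next I would extract the Hilbert series of $H^*(Y_{n,\la,s}(\bC);\bQ)^{S_\mu}$ from the graded Frobenius characteristic. The standard fact that $\Frob(\Ind_{S_\mu}^{S_n}\mathbf{1}) = h_\mu$, combined with Frobenius reciprocity and the duality $\langle m_\lambda,h_\nu\rangle=\delta_{\lambda\nu}$, gives that for any graded $S_n$-module $V$ the coefficient of $m_\mu$ in $\Frob(V;q)$ equals $\Hilb(V^{S_\mu};q)$. Applying this to $V=H^*(Y_{n,\la,s}(\bC);\bQ)$ and reading off the $m_\mu$ coefficient of Theorem~\ref{thm:Delta-Springer-point-count} yields
\[
\Hilb(H^*(Y_{n,\la,s}(\bC);\bQ)^{S_\mu};q) = |Y_{n,\la,s}^\mu(\Fq)|.
\]

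Combining the two computations, both sides have the same Hilbert series, so they are isomorphic as graded vector spaces. There is no real obstacle here — the work has already been done; the only mildly subtle point is that the polynomial $|Y_{n,\la,s}^\mu(\Fq)|$ appearing in Theorem~\ref{thm:Delta-Springer-point-count} must be the \emph{same} polynomial in $q$ on both sides of the chain, which is precisely what the affine paving in Lemma~\ref{lem:ProjectionPaving} guarantees (the cell dimensions are purely combinatorial data from $T$ and $w$ and do not depend on the field). One could record that this isomorphism is only asserted as graded vector spaces, not of $S_n$-modules, so no $S_n$-equivariance needs to be tracked; if one wanted an $S_\mu$-equivariant or ring-theoretic refinement, that would require additional geometric input (as in the classical Borho--MacPherson setting).
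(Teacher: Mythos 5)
Your proposal is correct and follows essentially the same argument as the paper: both proofs equate the Hilbert series of $H^*(Y_{n,\la,s}^\mu(\bC);\bQ)$ (computed via the affine paving of Lemma~\ref{lem:ProjectionPaving} and Corollary~\ref{cor:CellDimInv}, with the $\Fq$-point count from \eqref{eq:CellRecursionF}) with the Hilbert series of $H^*(Y_{n,\la,s}(\bC);\bQ)^{S_\mu}$ (read off via Frobenius reciprocity from the $m_\mu$ coefficient in Theorem~\ref{thm:Delta-Springer-point-count}), both being the polynomial $|Y_{n,\la,s}^\mu(\Fq)|$. The only difference is the order in which the two sides are computed, and your closing remark that the cell dimensions are field-independent combinatorial data is a nice clarification of why the same polynomial appears on both sides.
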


\begin{proof}
  By Frobenius reciprocity (see e.g.~\cite{Brosnan-Chow}), the $m_\mu(\bx)$ coefficient of the graded Frobenius characteristic of a graded $S_n$-module is the Hilbert series of the $S_\mu$-invariant subspace. By Theorem~\ref{thm:Delta-Springer-point-count}, applying this fact to the cohomology of $Y_{n,\la,s}(\bC)$ yields 
  \begin{align}\label{eq:qCountHilbSmuinv}
    |Y_{n,\la,s}^\mu(\Fq)| = \Hilb(H^*(Y_{n,\la,s}(\bC);\bQ)^{S_{\mu}};q)
  \end{align}
  for all prime powers $q$.  By the last part of Lemma~\ref{lem:ProjectionPaving},
  \[
    \Hilb(H^*(Y_{n,\la,s}^\mu(\bC);\bQ);q) =  \sum_{\substack{w \text{ admissible wrt }T,\\ \mu\text{-increasing}}} q^{\dim_\bC(C_w \cap Y_{N_T}^\mu(\bC))}.
  \]
  By \eqref{eq:CellRecursionF}, it can be checked by induction that $|C_w\cap Y_{N_T}^\mu(\Fq)| = q^{\dim_\bC(C_w \cap Y_{N_T}^\mu(\bC))}$ for each $w$ admissible and $\mu$-increasing. Since the $C_w\cap Y_{N_T}^\mu(\Fq)$ partition $\Fq$, we have
  \begin{align}\label{eq:qCountHilbmu}
    |Y_{n,\la,s}^\mu(\Fq)| = \Hilb(H^*(Y_{n,\la,s}^\mu(\bC);\bQ);q).
  \end{align}
  Combining \eqref{eq:qCountHilbSmuinv} and \eqref{eq:qCountHilbmu}, the two spaces have the same Hilbert series, and are hence isomorphic as graded vector spaces.
\end{proof}

\section{A Springer fiber decomposition of $Y_{n,\la,s}$}\label{sec:SpringerDecomp}

In this section, we decompose the $\Delta$-Springer variety $Y_{n,\la,s}^\mu$ into subspaces that are isomorphic to Steinberg varieties crossed with affine spaces, so in particular $Y_{n,\la,s}$ decomposes into copies of Springer fibers crossed with affine spaces. We then use this to prove our expansion of the graded Frobenius characteristic of the cohomology ring in terms of Hall-Littlewood polynomials in Section~\ref{sec:ProofMainThm}.

\begin{definition}
The \textbf{reverse reading order} of $[\Lambda]$ is the sequence of cells obtained by reading down each column of $[\Lambda]$, ordering the columns from left to right. The \textbf{reverse reading order filling} of $[\Lambda]$ is the unique bijection $T:[\Lambda]\to [K]$ such that $T(i,j) = \ell$ if and only if $(i,j)$ is the $\ell$th cell in reverse reading order.
\end{definition}

Throughout this section, we let $T$ be the reverse reading order filling of $[\Lambda]$. For notational convenience, we use the simplified notation $N \coloneqq N_T$ and $N^t \coloneqq N_T^t$.

Given $\alpha\in \Comp(n,s)$ such that $\la\subseteq \alpha$, let $[\alpha]$ be the subdiagram of $[\Lambda]$ defined by
\[
  [\alpha] \coloneqq\{(i,j) : 1\leq i\leq s,\, 1\leq j\leq \alpha_i\}.
\]
Let $\bF^\alpha\subseteq \bF^K$, which is the subspace spanned by $f_{i}$ for $i$ a label of $T$ contained in $[\alpha]$. Similarly, let $\bF^\la \coloneqq (N^t)^{n-k}\,\bF^K$ be the subspace spanned by $f_i$ for $i$ a label of $T$ contained in $[\la]$.

Let $w: [n]\to [K]$ be the unique $(n)$-increasing injective function whose image is the set of labels of $T$ in $[\alpha]$, and let $C_{w}$ be the corresponding Schubert cell in the Grassmannian $\cB^{(n,K-n)} = \mathrm{Gr}(n,\bF^K)$.
Define
\begin{align}
  Z^\mu_\alpha &\coloneqq \{V_\bullet \in Y_{N^t}^\mu \st V_n = \bF^\alpha\},\\
  \widehat{Z}^\mu_\alpha & \coloneqq \{V_\bullet \in Y_{N^t}^\mu \st V_n\in C_{w}\}.
\end{align}
Note that since $N^t$ restricts to a nilpotent matrix on the subspace $\bF^\alpha$ with Jordan type $\sort(\alpha)$, then $Z_\alpha^\mu\cong \Stein_{\sort(\alpha)}^\mu$.

The following is the main theorem of this section. In the case $\mu= (1^n)$, it says that the subspaces $\widehat{Z}_{\alpha}$, which partition $Y_{n,\la,s}$, are isomorphic to a Springer fiber crossed with an affine space.

\begin{theorem}\label{thm:main-iso}
We have
\[
\widehat{Z}^\mu_\alpha \cong \bF^\ell\times Z_\alpha^\mu,
\]
where $\ell = \sum_i(s-\alpha_i')(\alpha_{i+1}'-\la_{i+1}') + \coinv(\alpha)$ and $\alpha_i'$ is the number of cells of $[\alpha]$ in the $i$th column.
\end{theorem}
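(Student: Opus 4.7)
The plan is to decompose $\widehat{Z}^\mu_\alpha$ via the map $V_\bullet \mapsto V_n$ sending a flag to its top subspace, analyze the image $U_\alpha \coloneqq \{V_n \in C_w : N^t V_n \subseteq V_n,\ \bF^\lambda \subseteq V_n\}$, and identify the fibers. The fiber over any $V_n \in U_\alpha$ is the variety of partial flags $V_1 \subset \cdots \subset V_{\ell(\mu)} = V_n$ of type $\mu$ with $N^t V_i \subseteq V_i$. Since $V_n \in C_w$ has pivots at the image of $w$ (the labels of $T$ in $[\alpha]$), a direct analysis of the standard basis $\{v_p\}$ shows that $N^t|_{V_n}$ has Jordan type $\sort(\alpha)$ (the same as $N^t|_{\bF^\alpha}$); hence the fiber is isomorphic to the Steinberg variety $\Stein^\mu_{\sort(\alpha)} \cong Z^\mu_\alpha$. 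The theorem then reduces to showing $U_\alpha \cong \bF^\ell$ and that the fibration $\widehat{Z}^\mu_\alpha \to U_\alpha$ is trivial.

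For the first reduction, I would use the standard Schubert cell coordinates $\beta_{w(p),h}$ on $C_w$, where $h$ ranges over indices less than $w(p)$ not in $w([n])$. The condition $N^t V_n \subseteq V_n$ translates into linear equations equating certain $\beta$'s in successive columns of $[\Lambda]$, and the condition $\bF^\lambda \subseteq V_n$ forces vanishing of $\beta$'s associated with pivots in $[\lambda]$. Solving these constraints column by column, the free coordinates are in bijection with specific pairs of cells in $[\Lambda]$, and a combinatorial count matches their number with $\ell$: the term $\sum_i(s-\alpha_i')(\alpha_{i+1}'-\lambda_{i+1}')$ contributes one coordinate for each pair of cells $((a',i),(a,i+1))$ with $(a',i)\notin [\alpha]$ and $(a,i+1)\in [\alpha]\setminus [\lambda]$, while $\coinv(\alpha)$ accounts for coordinates associated with pairs of rows $a'<a$ satisfying $\alpha_{a'}<\alpha_a$, tying together the ``staircase steps'' of $[\alpha]$.

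For the second reduction, I would construct an algebraic family of $N^t$-equivariant isomorphisms $\phi_{V_n}\colon \bF^\alpha \to V_n$. For each row $a$ with $\alpha_a>0$, let $p_a$ be the index with $w(p_a)=T(a,\alpha_a)$; I would adjust the standard basis vector $v_{p_a}$ by a polynomial correction in the free $\beta$'s so that the adjusted vector $\tilde v_{p_a}$ generates a cyclic $N^t$-submodule of length $\alpha_a$, i.e., $(N^t)^{\alpha_a}\tilde v_{p_a} = 0$. Setting $\phi_{V_n}(f_{T(a,j)}) \coloneqq (N^t)^{\alpha_a-j}\tilde v_{p_a}$ then gives an $N^t$-equivariant isomorphism, and transporting a flag $W_\bullet \in Z^\mu_\alpha$ through $\phi_{V_n}$ assembles the desired bijection $\bF^\ell \times Z^\mu_\alpha \to \widehat{Z}^\mu_\alpha$.

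The main obstacle I foresee is the consistent construction of the vectors $\tilde v_{p_a}$: producing them simultaneously as polynomial functions of the $\beta$'s requires an inductive argument descending the rows of $[\alpha]$ in a suitable order, since the correction for $\tilde v_{p_a}$ may depend on corrections previously made for shorter rows. An alternative strategy, perhaps cleaner given the remark that $\cong$ denotes only a bijection of sets, would be to establish the theorem on $\Fq$-points: using the affine paving of $Y^\mu_{N^t}$ provided by Lemma~\ref{lem:ProjectionPaving}, one expresses $|\widehat{Z}^\mu_\alpha(\Fq)|$ as a sum of $q^{\inv_T(w')}$ over $\mu$-increasing admissible $w'$ with $w'([n])=w([n])$, and then produces a dimension-preserving combinatorial bijection showing this sum equals $q^\ell\cdot |Z^\mu_\alpha(\Fq)|$.
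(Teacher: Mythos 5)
Your outline has the right shape — the paper's argument also amounts to trivializing the fibration $V_\bullet \mapsto V_n$ from $\widehat{Z}^\mu_\alpha$ to the base $\{V_n \in C_w : N^t V_n \subseteq V_n,\ \bF^\lambda \subseteq V_n\}$ — but the two points you flag as "obstacles" are exactly where all the work lies, and your proposal does not resolve either of them. First, the claim that $N^t|_{V_n}$ has Jordan type $\sort(\alpha)$ for \emph{every} $V_n$ in the base is not a direct observation from the standard basis; a priori the type could jump within a Schubert cell. The way the paper proves it (as part of Lemma~\ref{lem:IsoSpringerProduct}) is to show every such $V_n$ equals $U\,\bF^\alpha$ for a unipotent $U$ commuting with $N^t$ built from the operators $U_{i,j}(t)$ attached to free pairs; the constancy of the Jordan type is then automatic, but it is a \emph{consequence} of, not a shortcut around, the hard surjectivity argument (which requires Lemma~\ref{lem:EqualEntries}, showing the non-free $\beta$-coordinates are determined by the free ones, combined with a careful descending induction on the free pairs). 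Your proposed $\phi_{V_n}$'s would have to be constructed by precisely this kind of induction; without doing it, you have only rephrased the problem. Second, even granting constant Jordan type on fibers, "base $\cong \bF^\ell$ and all fibers $\cong Z^\mu_\alpha$" does not give a product — triviality of the fibration is an extra statement that again needs the explicit family of $N^t$-equivariant isomorphisms.

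Your fallback of counting $\Fq$-points also has a gap: Lemma~\ref{lem:ProjectionPaving} gives an affine paving of $Y^\mu_{N_T}$ only when $T$ is Schubert-compatible, and the reverse reading order filling $T$ used in Section~\ref{sec:SpringerDecomp} is \emph{not} Schubert-compatible (its labels increase, not decrease, along rows), so the lemma does not apply directly to $Y^\mu_{N^t} = Y^\mu_{N^t_T}$. One could repair this by observing that $N^t_T = N_{T'}$ for the row-reversed filling $T'(i,j) = T(i,\Lambda_i-j+1)$, which is Schubert-compatible, but then one still needs a combinatorial argument partitioning the paving cells of $Y^\mu_{N_{T'}}$ according to the Grassmannian cell of $V_n$ and matching the resulting generating function against $q^\ell$ times the point count of a Steinberg variety; that bijection is not supplied. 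In short: the decomposition, the free-pair count, and the Steinberg identification are all correctly anticipated, but the proposal assumes rather than proves the two statements that make the theorem true.
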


In order to prove Theorem~\ref{thm:main-iso}, we need several definitions and lemmata.

\begin{definition}
  A pair $(i,j)$ with $i>j$ is a \textbf{free pair} for $\alpha$ if the cell with label $i$ of $T$ is in $[\alpha]/[\la]$ and the cell with label $j$ of $T$ is the leftmost cell of $[\Lambda]\setminus[\alpha]$ in its row.
\end{definition}

\begin{remark}
Let $w$ be the unique admissible $(n)$-increasing injective function whose image is the set of labels of $T$ in $[\alpha]$, as above. Free pairs $(i,j)$ are defined to correspond to a subset of standard coordinates $\beta_{i,j}$ of $C_w$. Note that $\beta_{i,j}$ is undefined unless $i$ is in the image of $w$, and that if $i$ is in the image of $w$, then $\beta_{i,j}=0$ if $j$ is a label in $[\alpha]$.
\end{remark}

\begin{lemma}\label{lem:number-free-pairs}
  The number of free pairs of $\alpha$ is
  \begin{align}\label{eq:number-free-pairs}
    \sum_i (s-\alpha_i')(\alpha_{i+1}'-\lambda_{i+1}') + \coinv(\alpha).
  \end{align}
\end{lemma}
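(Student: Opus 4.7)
The plan is to partition the free pairs of $\alpha$ according to the columns of $[\Lambda]$ in which their two cells lie, and count each part separately. Writing $(r_1,c_1) = T^{-1}(i)$ and $(r_2,c_2) = T^{-1}(j)$, we have $\la_{r_1} < c_1 \le \alpha_{r_1}$ and $c_2 = \alpha_{r_2}+1$. Since $T$ is the reverse reading order filling, the label inequality $i > j$ is equivalent to $c_1 > c_2$, or else $c_1 = c_2$ with $r_1 > r_2$.

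For the same-column case $c_1 = c_2$, substituting $c_1 = \alpha_{r_2}+1$ into $c_1 \le \alpha_{r_1}$ yields $\alpha_{r_2} < \alpha_{r_1}$, so $(r_2,r_1)$ is a coinversion of $\alpha$. Conversely, given any coinversion $r_2 < r_1$ of $\alpha$, I set $c \coloneqq \alpha_{r_2}+1$; then $(r_1,c) \in [\alpha] \setminus [\la]$ because $\la_{r_1} \le \la_{r_2} \le \alpha_{r_2} < c \le \alpha_{r_1}$, and $(r_2,c)$ is the leftmost cell of $[\Lambda] \setminus [\alpha]$ in row $r_2$. The only subtle point is verifying $c \le \Lambda_{r_2}$: if $\alpha_{r_2} = \Lambda_{r_2}$, then $\alpha_{r_2} - \la_{r_2} = n - k = \sum_{r'}(\alpha_{r'}-\la_{r'})$, forcing $\alpha_{r'} = \la_{r'}$ for every $r' \ne r_2$ and contradicting $\alpha_{r_1} > \alpha_{r_2} \ge \la_{r_2} \ge \la_{r_1}$. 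This produces a bijection from same-column free pairs to coinversions of $\alpha$, contributing $\coinv(\alpha)$.

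For the different-column case, I would fix $c = c_1$ and count the choices of $r_1$ and $r_2$ independently. The number of $r_1$ with $(r_1,c) \in [\alpha] \setminus [\la]$ equals $\alpha'_c - \la'_c$, while the number of $r_2$ with $\alpha_{r_2}+1 < c$ equals $s - \alpha'_{c-1}$ under the convention $\alpha'_0 = s$. The requirement $r_1 \ne r_2$ is automatic, since $r_1 = r_2$ would force $c \le \alpha_{r_1} \le c - 2$, and the constraint $\alpha_{r_2} < \Lambda_{r_2}$ holds by the same edge-case argument as above (any valid $r_1$ witnesses a cell of $[\alpha] \setminus [\la]$ outside row $r_2$). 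Summing over $c \ge 1$ and reindexing $i = c-1$ converts $\sum_{c \ge 1}(\alpha'_c - \la'_c)(s - \alpha'_{c-1})$ into $\sum_{i \ge 0}(s - \alpha'_i)(\alpha'_{i+1} - \la'_{i+1})$, and combining with the same-column contribution yields the claimed formula.

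The main delicate point is the bookkeeping of the degenerate possibility $\alpha_r = \Lambda_r$, in which row $r$ possesses no leftmost cell of $[\Lambda] \setminus [\alpha]$; the arithmetic identity $|\alpha|-|\la| = n-k$ ensures this situation is incompatible with the existence of either flavor of free pair, so it never affects the count.
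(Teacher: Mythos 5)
Your proof is correct and follows the same overall plan as the paper's: split free pairs into the same-column and different-column cases, match the former to coinversions, and count the latter columnwise. The same-column bijection is identical. For the different-column case, however, your bookkeeping is slightly different and in fact more careful. The paper associates to each such free pair $(i,j)$ the pair $(i,j')$ with $j'$ in the row of $j$ and the column immediately to the left of $i$, and then counts pairs $(i,j')$ with $j'$ a label of $[\Lambda]\setminus[\alpha]$; but the cell $(r_2,c_1-1)$ need not lie in $[\Lambda]$ at all (take $n=4$, $\lambda=(2,0)$, $s=2$, $\alpha=(4,0)$: the free pair $(6,2)$ has $j$ in row $2$ and $c_1-1=3>\Lambda_2=2$), and correspondingly $s-\alpha_p'$ need not equal the number of cells of $[\Lambda]\setminus[\alpha]$ in column $p$. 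Those two slips cancel, so the paper's final count is right, but the intermediate bijection is not literally well-defined. Your direct parametrization by the triple $(c,r_1,r_2)$ avoids this entirely, and your explicit check that $\alpha_{r_2}<\Lambda_{r_2}$ whenever a valid $r_1$ exists — via the arithmetic $\sum_r(\alpha_r-\lambda_r)=n-k$ — is exactly the verification that the paper's argument leaves implicit. So the two approaches buy the same formula, but yours is airtight on the edge cases.
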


\begin{proof}
  Let $(i,j)$ be a free pair for $\alpha$. If $i$ and $j$ are not in the same column, in which case $j$ must be in a column to the left of $i$, then associate $(i,j)$ to the pair $(i,j')$, where $j'$ is the label in the same row as $j$ and in the column immediately to the left of the column of $i$. This correspondence sets up a bijection between free pairs in which $i$ and $j$ are not in the same column and pairs $(i,j')$ where $i$ is a label in $[\alpha]\setminus [\lambda]$ and $j'$ is a label in $[\Lambda]\setminus [\alpha]$ in the column immediately to the left of $i$. Counting these latter pairs by the column containing $i$, we have the sum
  \[
    \sum_p (s-\alpha_p') (\alpha_{p+1}' - \lambda_{p+1}').
  \]

  If $i$ and $j$ are in the same column, then associate the free pair $(i,j)$ to the coinversion $(r,r')$ of $\alpha$, where $r$ is the row of $j$ and $r'$ is the row of $i$. Indeed, $(r,r')$ is a coinversion since $r<r'$ by the fact that $i>j$ and the definition of reverse inversion reading order, and $\alpha_r<\alpha_{r'}$ since $i$ is in $[\alpha]$ and $j$ is not. This correspondence is a bijection between free pairs with $i$ and $j$ in the same column and coinversions of $\alpha$. Thus, the total number of free pairs is given by \eqref{eq:number-free-pairs}.
\end{proof}

\begin{definition}
  For $(i,j)$ a free pair with $i>j$, let $U_{i,j}(t)$ be the $K\times K$ matrix such that
  \begin{align}
    U_{i,j}(t) (N^mf_i) &= N^m(f_i+tf_j),\label{eq:right-shift}\\
    U_{i,j}(t) ((N^t)^mf_i) &= (N^t)^m(f_i+tf_j),\label{eq:left-shift}
  \end{align}
  for all $m\geq 0$, and $U_{i,j}(t)f_\ell=f_\ell$ for all labels $\ell$ of $T$ that are not in the same row as $i$.

  Similarly, let $\widehat{U}_{i,j}(t)$ be the matrix such that
  \begin{align}
    \widehat{U}_{i,j}(t) (N^mf_i) &= N^m (f_i + tf_j)\label{eq:hat-left-shift}
  \end{align}
  for all $m\geq 0$, and $\widehat{U}_{i,j}(t) f_\ell=f_\ell$ for all labels $\ell$ of $T$ that are either not in the same row as $i$ or not weakly to the right of $i$.
\end{definition}

\begin{lemma}\label{lem:UpperTri}
The matrices $U_{i,j}(t)$ and $\widehat{U}_{i,j}(t)$ are unipotent upper triangular.
\end{lemma}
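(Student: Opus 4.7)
The plan is to verify that both matrices have the form $I + t\cdot M$, where $M$ is strictly upper triangular with at most one nonzero entry per column (equal to $1$). I work with the standard basis $f_1,\ldots,f_K$ ordered by the reverse reading order of $T$, which satisfies $T(a,b) < T(a',b')$ if and only if $b < b'$, or $b = b'$ and $a < a'$. Write $i = T(a_i,b_i)$ and $j = T(a_j,b_j)$, so that the hypothesis $i > j$ of a free pair becomes: either $b_i > b_j$, or $b_i = b_j$ and $a_i > a_j$.

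For any label $\ell$ outside row $a_i$, both matrices fix $f_\ell$ by definition, giving a trivially upper-triangular column with $1$ on the diagonal. For labels in row $a_i$, every basis vector is uniquely expressible as $N^m f_i$ for some $0\le m\le \Lambda_{a_i}-b_i$ or as $(N^t)^m f_i$ for some $0\le m\le b_i-1$; the two expressions agree at $m=0$, so the formulas \eqref{eq:right-shift} and \eqref{eq:left-shift} defining $U_{i,j}(t)$ are internally consistent. Applying \eqref{eq:right-shift}, \eqref{eq:left-shift}, and \eqref{eq:hat-left-shift}, each such $f_\ell$ is sent to $f_\ell + t\cdot v$, where $v$ equals $N^m f_j$ or $(N^t)^m f_j$. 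Since $N$ and $N^t$ act as nilpotent left/right shifts on each row of $[\Lambda]$ separately, $v$ is either $0$ or a single basis vector $f_{\ell'}$ in row $a_j$.

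The key step is to verify $\ell' < \ell$ whenever $v\ne 0$. Writing $\ell = T(a_i, b_i+\epsilon m)$ and $\ell' = T(a_j, b_j+\epsilon m)$ with $\epsilon\in\{+1,-1\}$ depending on whether one uses $N$ or $N^t$, the desired comparison $\ell' < \ell$ is exactly the comparison $T(a_j, b_j+\epsilon m) < T(a_i, b_i+\epsilon m)$. Because reverse reading order compares by column first and then by row, and because adding the same integer $\epsilon m$ to both column indices preserves both the primary column inequality and the secondary row tiebreak, this inequality reduces directly to $T(a_j,b_j) < T(a_i,b_i)$, i.e.\ $j < i$. This establishes that $U_{i,j}(t)$ is unipotent upper triangular. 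The case of $\widehat U_{i,j}(t)$ is strictly easier: only \eqref{eq:hat-left-shift} is used, covering the right-shift half of the argument, and the same order-preservation observation applies.

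I expect no substantive obstacle. The only bookkeeping care is to confirm that $N^m f_j$ and $(N^t)^m f_j$ are either $0$ or a single basis vector (immediate from the row-shift nature of $N$ and $N^t$), and that the $m=0$ consistency condition for $U_{i,j}(t)$ holds; after that, upper triangularity reduces cleanly to the fact that the reverse reading order is translation-invariant in the column direction.
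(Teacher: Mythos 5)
Your proof is correct and takes essentially the same approach as the paper: the paper's proof likewise reduces to checking that $N^m$ and $(N^t)^m$ preserve the order $j < i$, which it attributes (tersely) to the definition of free pairs and the reverse reading order, while you spell out the column-first-then-row comparison and the column-translation invariance that make this immediate. Incidentally, your direction of the inequality (the image of $f_j$ has smaller index than the image of $f_i$) is the correct one for upper triangularity; the paper's wording ``the vector $N^m f_i$ has index less than the vector $N^m f_j$'' has the roles of $i$ and $j$ accidentally transposed.
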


\begin{proof}
  By construction, $U_{i,j}(t)$ has $1$s along the diagonal, so it suffices to check it is upper triangular. Thus, it suffices to check that for all $m$, the vector $N^mf_i$ has index less than the vector $N^m f_j$ (if both are nonzero), and the vector $(N^t)^mf_i$ has index less than the vector $(N^t)^mf_j$ (if both are nonzero). Both of these claims follow immediately from the definition of free pairs and reverse inversion reading order. The same reasoning applies to $\widehat{U}_{i,j}(t)$.
\end{proof}

\begin{lemma}\label{lem:hat-equation}
  Let $(i_1,j_1),(i_2,j_2),\dots,(i_\ell,j_\ell)$ be the free pairs for $\alpha$ listed so that $i_1\geq i_2\geq \cdots \geq i_\ell$. Then for all $p\leq \ell$,
  \begin{align}\label{eq:hat-equation}
    U_{i_{p},j_{p}}(t_{p})\,U_{i_{p-1},j_{p-1}}(t_{p-1})\cdots U_{i_1,j_1}(t_1)\,\bF^\alpha = \widehat{U}_{i_{p},j_{p}}(t_{p})\,\widehat{U}_{i_{p-1},j_{p-1}}(t_{p-1})\cdots \widehat{U}_{i_1,j_1}(t_1)\,\bF^\alpha.
  \end{align}
\end{lemma}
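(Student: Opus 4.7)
The plan is to factor each $U_{i,j}(t)$ as $U_{i,j}(t) = \widehat{U}_{i,j}(t)\,E_{i,j}(t)$, where $E_{i,j}(t)$ is the unipotent matrix defined by $E_{i,j}(t)\,(N^t)^m f_i = (N^t)^m f_i + t\,(N^t)^m f_j$ for $m\geq 1$ and which fixes every other standard basis vector. This isolates the ``strictly left'' modification that distinguishes $U_{i,j}(t)$ from $\widehat{U}_{i,j}(t)$. The identity is checked case-by-case on standard basis vectors, using that $\widehat{U}_{i,j}(t)$ fixes $(N^t)^m f_j$ for $m\geq 1$; this in turn follows from the free pair condition, which forces $j$ to lie in a row different from that of $i$ (otherwise both $i$ and $j$ would sit in some row $b$, but then $i$ at column $\leq \alpha_b$ would be strictly to the left of $j$ at column $\alpha_b+1$, contradicting $i>j$ under reverse reading order).

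Set $V_q \coloneqq U_{i_q,j_q}(t_q)\cdots U_{i_1,j_1}(t_1)\,\bF^\alpha$ and define $\widehat{V}_q$ analogously. I would prove $V_p = \widehat{V}_p$ by induction on $p$. Assuming $V_{p-1} = \widehat{V}_{p-1}$, the factorization gives $V_p = \widehat{U}_{i_p,j_p}(t_p)\,E_{i_p,j_p}(t_p)\,V_{p-1}$, so it suffices to show $E_{i_p,j_p}(t_p)\,V_{p-1} = V_{p-1}$. Since $E_{i_p,j_p}(t_p)$ is invertible, a dimension count reduces this to $E_{i_p,j_p}(t_p)\,V_{p-1} \subseteq V_{p-1}$. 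The image of $E_{i_p,j_p}(t_p) - I$ is contained in the span of $\{(N^t)^m f_{j_p} : m \geq 1\}$, so it is enough to verify that each nonzero vector $(N^t)^m f_{j_p}$ lies in $V_{p-1}$.

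The key combinatorial observation is that each vector $(N^t)^m f_{j_p} = f_{\ell'}$ is unchanged by every previous matrix $U_{i_q,j_q}(t_q)$ for $q<p$, and therefore appears unmodified in the natural basis of $V_{p-1}$ inherited from that of $\bF^\alpha$. Let $b$ be the row of $j_p$; then $\ell'$ lies in row $b$ strictly to the left of $j_p$. Since $j_p$ is the leftmost cell of $[\Lambda]\setminus[\alpha]$ in row $b$, we have $\ell'\in[\alpha]$, so $f_{\ell'}\in\bF^\alpha$. The matrix $U_{i_q,j_q}(t_q)$ fixes $f_{\ell'}$ whenever $\mathrm{row}(i_q)\neq b$. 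Suppose for contradiction that $\mathrm{row}(i_q)=b$. Since $i_q\in[\alpha]\setminus[\lambda]$, the cell of $i_q$ lies in some column $\leq \alpha_b$, placing $i_q$ strictly to the left of $j_p$ in row $b$; then reverse reading order gives $i_q<j_p$. But the decreasing ordering of the free pairs gives $i_q\geq i_p$, and the free pair condition on $(i_p,j_p)$ gives $i_p>j_p$, so $i_q\geq i_p>j_p$, a contradiction. Therefore each $U_{i_q,j_q}(t_q)$ with $q<p$ fixes $f_{\ell'}$, so $f_{\ell'}\in V_{p-1}$.

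The main obstacle is precisely the combinatorial contradiction of the third paragraph: the argument must juggle the decreasing ordering $i_1\geq\cdots\geq i_\ell$, the free pair inequality $i_p>j_p$, and the left-to-right increasing property of labels within a row under reverse reading order, in order to rule out the only way that a previous $U_{i_q,j_q}(t_q)$ could disturb the vectors $(N^t)^m f_{j_p}$. Once that obstruction is removed, the rest is bookkeeping via the factorization $U = \widehat{U}\,E$ and the invertibility of $E$.
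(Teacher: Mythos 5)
Your proof is correct and takes essentially the same approach as the paper. The factorization $U_{i,j}(t) = \widehat{U}_{i,j}(t)\,E_{i,j}(t)$ is a mild reorganization of the paper's argument, which instead directly observes that $U_{i_{p+1},j_{p+1}}(t_{p+1})v - \widehat{U}_{i_{p+1},j_{p+1}}(t_{p+1})v$ lies in the span of the vectors $(N^t)^m f_{j_{p+1}}$ and shows those vectors lie in both sides of the target equation; in both proofs the crucial point is that the error vectors $(N^t)^m f_{j_p}$ lie in $\bF^\alpha$ and are fixed by all the earlier operators, established (in your version) via the row contradiction using $i_q \geq i_p > j_p$.
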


\begin{proof}
  We proceed by induction on $p$. The base case $p=0$ is trivial, so suppose that for some $p\geq 0$, \eqref{eq:hat-equation} holds. It suffices to show that
  \begin{align}\label{eq:hat-equationplus}
    U_{i_{p+1},j_{p+1}}(t_{p+1})\widehat{U}_{i_{p},j_{p}}(t_{p})\cdots \widehat{U}_{i_1,j_1}(t_1)\bF^\alpha =  \widehat{U}_{i_{p+1},j_{p+1}}(t_{p+1})\widehat{U}_{i_{p},j_{p}}(t_{p})\cdots \widehat{U}_{i_1,j_1}(t_1)\bF^\alpha.
  \end{align}

Let $v\in \widehat{U}_{i_{p},j_{p}}(t_{p})\cdots \widehat{U}_{i_1,j_1}(t_1)\bF^\alpha$. Then
  \begin{align}\label{eq:LinearComb}
    U_{i_{p+1},j_{p+1}}(t_{p+1})v - \widehat{U}_{i_{p+1},j_{p+1}}(t_{p+1})v
  \end{align}
  is a linear combination of the vectors $(N^t)^mf_{j_{p+1}}$ for $m>0$. For a fixed $m>0$, let  $f_{j'} = (N^t)^mf_{j_{p+1}}$.
  Then $j'$ is in a column strictly to the left of $i_q$ for all $q\leq p+1$. Since the operators $\widehat{U}_{i_q,j_q}(t_q)$ for $q\leq p+1$ fix $f_{j'}$, and since $f_{j'}\in \bF^\alpha$ then $$f_{j'} \in \widehat{U}_{i_{p},j_{p}}(t_{p})\cdots \widehat{U}_{i_1,j_1}(t_1)\bF^\alpha.$$
  Furthermore, since $j'$ is not in the same row as $i_{p+1}$, then $U_{i_{p+1},j_{p+1}}(t_{p+1})$ and $\widehat{U}_{i_{p+1},j_{p+1}}(t_{p+1})$ fix $f_{j'}$, so $f_{j'}$ is in both sides of \eqref{eq:hat-equationplus}. Therefore, \eqref{eq:LinearComb} is in the intersection of the left-hand side and right-hand side of~\eqref{eq:hat-equationplus}. Since $U_{i_{p+1},j_{p+1}}(t_{p+1})v$ and $\widehat{U}_{i_{p+1},j_{p+1}}(t_{p+1})v$ are arbitrary elements of the left- and right-hand sides of \eqref{eq:hat-equationplus}, respectively, then the two sets are equal. The induction is thus complete.
%
%  Hence, if $v\in \widehat{U}_{i_{p},j_{p}}(t_{p})\cdots \widehat{U}_{i_1,j_1}(t_1)\bk^\alpha$, since $U_{i_{p+1},j_{p+1}}(t_{p+1})v - \widehat{U}_{i_{p+1},j_{p+1}}(t_{p+1})v$ is a linear combination of the vectors $N^mf_{j_{p+1}}$, and each $N^mf_{j_{p+1}}$ is fixed by $\widehat{U}_{i_{p+1},j_{p+1}}(t_{p+1})$, then $U_{i_{p+1},j_{p+1}}(t_{p+1})v$ is in $\widehat{U}_{i_{p+1},j_{p+1}}(t_{p+1})\widehat{U}_{i_{p},j_{p}}(t_{p})\cdots \widehat{U}_{i_1,j_1}(t_1)\bk^\alpha$, which shows one containment. The other containment follows similarly from the observation that $U_{i_{p+1},j_{p+1}}(t_{p+1})$ fixes $N^mf_{j_{p+1}}$ as well. Thus, we have equality and the identity \eqref{eq:hat-equation} follows by induction.
\end{proof}

\begin{lemma}
  Let $(i_1,j_1),(i_2,j_2),\dots, (i_\ell,j_\ell)$ be the free pairs for $\alpha$, listed so that $i_1\geq i_2\geq \cdots \geq i_\ell$. Then we have a well-defined map
  \[
    \bF^\ell \times Z^\mu_\alpha \to \widehat{Z}^\mu_\alpha
  \]
  defined by sending $(\vec{t},V_\bullet)$ to ${U}_{i_\ell,j_\ell}(t_\ell)\cdots {U}_{i_2,j_2}(t_2) {U}_{i_1,j_1}(t_1)V_\bullet$.
\end{lemma}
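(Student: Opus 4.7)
Verifying well-definedness amounts to showing that for every $(\vec t, V_\bullet) \in \bF^\ell \times Z^\mu_\alpha$, the flag $V'_\bullet \coloneqq U_{i_\ell,j_\ell}(t_\ell)\cdots U_{i_1,j_1}(t_1)V_\bullet$ lies in $\widehat Z^\mu_\alpha$. This is two independent claims: (i) $V'_\bullet \in Y^\mu_{N^t}$, and (ii) $V'_n \in C_w$.

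For (i), the plan is to check that each operator $U_{i,j}(t)$ with $(i,j)$ a free pair sends a flag in $Z^\mu_\alpha$ to one still in $Y^\mu_{N^t}$. Each $U_{i,j}(t)$ is invertible (unipotent, by Lemma~\ref{lem:UpperTri}), so flag dimensions are preserved automatically. The heart of this step is the claim that $U_{i,j}(t)$ commutes with $N^t$ \emph{when restricted to the subspace $\bF^\alpha$}, i.e., $U_{i,j}(t)\,N^t v = N^t U_{i,j}(t)\,v$ for all $v\in\bF^\alpha$; this is established by a case analysis on basis vectors of $\bF^\alpha$ using the defining identities \eqref{eq:right-shift} and \eqref{eq:left-shift}. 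Restricted commutation propagates $N^t$-invariance of each $V_p \subseteq \bF^\alpha$ to $U V_p$. The remaining condition $(N^t)^{n-k}\bF^K = \bF^\la \subseteq U\bF^\alpha$ is obtained by exhibiting, for each $f_\ell$ with $\ell\in[\la]$, an explicit preimage in $\bF^\alpha$ under $U$ (which exists because the correction $(N^t)^m f_j$ stays inside $\bF^\alpha$ or vanishes, again using the free-pair placement of $j$).

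For (ii), first apply Lemma~\ref{lem:hat-equation} to rewrite $V'_n = \widehat U_{i_\ell,j_\ell}(t_\ell)\cdots\widehat U_{i_1,j_1}(t_1)\bF^\alpha$, and set $W \coloneqq \mathrm{span}\{f_h : h\notin[\alpha]\}$, the standard complement of $\bF^\alpha$ in $\bF^K$. The heart of this step is a pair of stability properties for each $\widehat U_{i,j}(t)$ with $(i,j)$ a free pair: (a) $\widehat U_{i,j}(t)\,W\subseteq W$, and (b) $\widehat U_{i,j}(t)\,f_{w(p)}\in f_{w(p)}+W$ for each $p\leq n$. Both reduce to the observation that $N^m f_j \in W$ for every $m\geq 0$, which is immediate from the fact that $j$ is the leftmost cell of $[\Lambda]\setminus[\alpha]$ in its row. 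Iterating (a) and (b) across the $\ell$ operators, combined with the upper-triangularity of Lemma~\ref{lem:UpperTri}, produces $\widehat U_{i_\ell,j_\ell}(t_\ell)\cdots\widehat U_{i_1,j_1}(t_1)\,f_{w(p)} = f_{w(p)} + g_p$ with $g_p \in \mathrm{span}\{f_h : h<w(p),\,h\notin[\alpha]\}$, which is precisely the standard-coordinate form of a point of $C_w$.

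The main obstacle is the restricted commutation in step (i). It is \emph{not} true that $U_{i,j}(t)$ commutes with $N^t$ on all of $\bF^K$ --- on basis vectors outside $\bF^\alpha$ the commutation genuinely fails --- so one really must restrict to $\bF^\alpha$ and argue case by case. The delicate subcase is $\ell = N^m f_i \in [\alpha]$ with $m\geq 1$, where a potential ``overflow'' $N^m f_j = 0 \neq N^{m-1}f_j$ would spoil the commutation; a short arithmetic argument combining $\alpha\supseteq\la$, $|\la|=k$, and $\Lambda_p = \la_p+(n-k)$ with the free-pair inequalities shows this overflow is arithmetically impossible, and the remaining cases (including property (b) of step (ii)) follow by a shorter and more direct check.
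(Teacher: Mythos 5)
Your proposal follows the same broad outline as the paper's proof: verify (a) $N^t$-invariance of the parts of the image flag, (b) $\bF^\la\subseteq V'_n$, and (c) $V'_n\in C_w$. For (c), the paper gives a one-liner: each $U_{i_p,j_p}(t_p)$ is unipotent upper triangular (Lemma~\ref{lem:UpperTri}), hence lies in the parabolic $P^{(n,K-n)}$ and preserves the Grassmannian Schubert cell $C_w$; your route through Lemma~\ref{lem:hat-equation} and stability of $W=\vspan\{f_h : h\notin[\alpha]\}$ under $\widehat U_{i,j}(t)$ is correct but longer. For (b), the paper also invokes Lemma~\ref{lem:hat-equation} together with the fact that $\widehat U_{i,j}(t)$ fixes $\bF^\la$ (since $i\notin[\la]$ lies strictly to the right of $[\la]$ in its row), which is cleaner than the ``explicit preimage'' sketch you give.

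Where your write-up adds real content is (a): you correctly observe that $U_{i,j}(t)N^t = N^tU_{i,j}(t)$ is \emph{not} a true matrix identity — the commutator is nonzero on $N^m f_i$ whenever $N^m f_j = 0\neq N^{m-1}f_j$ and $N^m f_i\neq 0$, a boundary effect at the right end of row $c$ of $j$. (The paper asserts the unqualified identity ``by construction,'' which is imprecise.) Your arithmetic — forcing $m=\Lambda_c-\alpha_c$ and then using $\alpha_a+\alpha_c\le n-(k-\la_a-\la_c)$ together with $\Lambda_c=\la_c+(n-k)$ to derive $b\le\la_a$, contradicting $i\notin[\la]$ — correctly shows the offending basis vector never lies in $[\alpha]$, giving commutation restricted to $\bF^\alpha$. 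However, this only handles the first operator. After $U_{i_1,j_1}(t_1)$ is applied, the parts $U_{i_1,j_1}(t_1)V_p$ are no longer contained in $\bF^\alpha$, so restricted commutation on $\bF^\alpha$ does not directly give $N^t$-invariance of $U_{i_2,j_2}(t_2)U_{i_1,j_1}(t_1)V_p$, and your sentence ``restricted commutation propagates $N^t$-invariance of each $V_p\subseteq\bF^\alpha$ to $UV_p$'' does not address this iteration. Closing the step requires showing that for each $q$ the intermediate image $U_{i_{q-1},j_{q-1}}(t_{q-1})\cdots U_{i_1,j_1}(t_1)\bF^\alpha$ avoids the rank-one support of $[U_{i_q,j_q}(t_q),N^t]$ (which can be checked via Lemma~\ref{lem:hat-equation} and a similar $|\alpha|=n$ count applied to the rows $a_p,a_q,c_q$); this is a genuine additional verification your proposal does not supply.
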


\begin{proof}
  Letting $V_\bullet\in \widehat{Z}^\mu_\alpha$, it suffices to show that ${U}_{i_\ell,j_\ell}(t_\ell)\cdots {U}_{i_2,j_2}(t_2) {U}_{i_1,j_1}(t_1)V_\bullet \in \widehat{Z}^\mu_\alpha$. Since each ${U}_{i_p,j_p}(t_p)$ is unipotent upper triangular by Lemma~\ref{lem:UpperTri} and $V_n=\bF^\alpha \in C_{w}^{(n)}$, then
  \[
    U_{i_\ell,j_\ell}(t_\ell)\cdots U_{i_2,j_2}(t_2) U_{i_1,j_1}(t_1)V_n \in C_{w}^{(n)}.
  \]
  By contruction, we have $U_{i_p,j_p}(t_p) N^t = N^t U_{i_p,j_p}(t_p)$ for all $p\leq \ell$, so $N^t$ preserves each part of the partial flag $U_{i_\ell,j_\ell}(t_\ell)\cdots U_{i_2,j_2}(t_2) U_{i_1,j_1}(t_1)V_\bullet$.

  Finally, we must check that ${U}_{i_\ell,j_\ell}(t_\ell)\cdots {U}_{i_2,j_2}(t_2) {U}_{i_1,j_1}(t_1)V_n \supseteq \bF^\la$. Indeed, this follows from the fact that $V_n = \bF^\alpha\supseteq \bF^\la$, Lemma~\ref{lem:hat-equation}, and the fact that each $\widehat{U}_{i_p,j_p}(t_p)$ fixes $\bF^\la$. Thus, the map is well defined.
  %such that $f_r$ is not of the form $N^mf_i$ for some $m>0$, then it suffices to check that if $N^mf_i\in\bk^\la$, then $N^mf_i \in U_{i,j}(t)V_n$. Given $m$ such that $N^mf_i\in \bk^\la$ nonzero, then since $N^mf_j$ is not in the same row as $f_i$, we have $U_{i,j}(t)N^mf_j = N^mf_j$, and since the cell to the right of $j$ is in $[\alpha]$ (or $j$ is in the right-most column), then $N^mf_j\in \bk^\alpha$. Hence,
%  \begin{align}
  %  N^mf_i &= U_{i,j}(t) N^m f_i - tN^{m}f_j\\
    %       &= U_{i,j}(t)(N^mf_i - tN^mf_j) \in U_{i,j}(t)\bk^\alpha,
%  \end{align}
%Thus, $U_{i,j}(t)V_\bullet \in \widehat{\cB}_\mu^\alpha$ so the map is well defined.
\end{proof}

\begin{lemma}\label{lem:ChangingEntries}
Let $V_\bullet\in Z^\mu_\alpha$, and let $(i_1,j_1),(i_2,j_2),\dots,(i_\ell,j_\ell)$ be the free pairs for $\alpha$ listed so that $i_1\geq i_2\geq \cdots \geq i_\ell$. For all $p\leq \ell$, then the standard coordinate $\beta_{i_p,j_p}$ of ${U}_{i_\ell,j_\ell}(t_\ell)\cdots {U}_{i_1,j_1}(t_1)V_n\in C_w^{(n)}$ is  $t_p$.
\end{lemma}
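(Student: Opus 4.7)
The plan is to first replace the product of $U$-operators by the corresponding product of $\widehat{U}$-operators via Lemma~\ref{lem:hat-equation}, and then compute $\beta_{i_p,j_p}$ directly. Writing $\widehat{W} \coloneqq \widehat{U}_{i_\ell,j_\ell}(t_\ell)\cdots\widehat{U}_{i_1,j_1}(t_1)$, Lemma~\ref{lem:hat-equation} gives $U_{i_\ell,j_\ell}(t_\ell)\cdots U_{i_1,j_1}(t_1)\bF^\alpha = \widehat{W}\bF^\alpha$; since $\beta_{i_p,j_p}$ depends only on the subspace, I may work with $\widehat{W}$.

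The first step is to prove the structural claim that for every $b\in\Im(w)$ and every partial product $\widehat{W}_q \coloneqq \widehat{U}_{i_q,j_q}(t_q)\cdots\widehat{U}_{i_1,j_1}(t_1)$,
\[
\widehat{W}_q f_b \;=\; f_b \;+\; \sum_{\substack{r<b\\ r\notin\Im(w)}} c_r^{(q)} f_r
\]
for some scalars $c_r^{(q)}$. The key point is that each $\widehat{U}_{i_a,j_a}(t_a)$ modifies a basis vector $f_r$ only by adding a scalar multiple of $N^m f_{j_a}$ for some $m\geq 0$, and any such vector lies in the row of $j_a$ at a column strictly greater than the number of cells of $[\alpha]$ in that row, so it is outside $[\alpha]$ and hence not indexed by $\Im(w)$. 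Combining this with the upper-triangularity of $\widehat{U}_{i_a,j_a}(t_a)$ (cf.\ Lemma~\ref{lem:UpperTri}), the claim follows by induction on $q$. Taking $q=\ell$, the vectors $\widehat{W}f_b$ for $b\in\Im(w)$ are already in the standard coordinate form of~\eqref{eq:SchubCellCoords}, so $\beta_{i_p,j_p}$ equals the coefficient of $f_{j_p}$ in $\widehat{W}f_{i_p}$.

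To compute this coefficient, set $g_q \coloneqq \widehat{W}_q f_{i_p}$. Expanding $g_q = \widehat{U}_{i_q,j_q}(t_q)g_{q-1}$ using the defining formula for $\widehat{U}_{i_q,j_q}(t_q)$, the only basis vector $f_r$ whose image contributes to $f_{j_p}$ must lie in the row of $i_q$, and solving the resulting column equation forces $r=i_q$ while also requiring $j_q$ and $j_p$ to lie in the same row. Since each $j$ appearing in a free pair is the unique leftmost cell of $[\Lambda]\setminus[\alpha]$ in its row, the condition that $j_q$ and $j_p$ share a row is equivalent to $j_q = j_p$. Consequently, the coefficient of $f_{j_p}$ in $g_q$ differs from that in $g_{q-1}$ by exactly $t_q\cdot(\text{coefficient of }f_{i_q}\text{ in }g_{q-1})$ when $j_q = j_p$, and by $0$ otherwise.

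Finally, I would argue that only $q=p$ produces a nonzero contribution. If $j_q=j_p$ and $i_q=i_p$, then the free pairs coincide and $q=p$; the structural claim applied to $b=i_p$ forces the coefficient of $f_{i_p}$ in $g_{p-1}$ to be $1$, giving contribution $t_p$. If instead $j_q=j_p$ but $i_q\neq i_p$, then $i_q\in[\alpha]\setminus[\lambda]\subseteq\Im(w)$ and $i_q\neq i_p$, so the structural claim forces the coefficient of $f_{i_q}$ in $g_{q-1}$ to be $0$. Summing over $q$, the coefficient of $f_{j_p}$ in $g_\ell$ equals $t_p$, hence $\beta_{i_p,j_p}=t_p$. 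The main obstacle is really Lemma~\ref{lem:hat-equation}, which has already been established; beyond that, the proof is careful bookkeeping of which expansion terms lie inside $[\alpha]$ versus outside.
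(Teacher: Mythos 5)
Your proof is correct and follows essentially the same approach as the paper: pass from the $U$'s to the $\widehat U$'s via Lemma~\ref{lem:hat-equation}, observe that the $\widehat U$ operators preserve the standard coordinate form (your ``structural claim'' is the paper's assertion that $\widehat U_{i,j}(t)$ sends column-reduced matrices to column-reduced matrices), and then argue that only $\widehat U_{i_p,j_p}(t_p)$ can alter the $\beta_{i_p,j_p}$ coordinate because the $j$'s in free pairs are the unique leftmost cells of $[\Lambda]\setminus[\alpha]$ in their rows. Your writeup simply fills in the bookkeeping that the paper's four-sentence proof leaves implicit.
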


\begin{proof}
  By construction of $\widehat{U}_{i,j}(t)$, the standard coordinates of $\widehat{U}_{i,j}(t)V$ are obtained by applying $\widehat{U}_{i,j}(t)$ directly to each $v_p$ and collecting terms (in terms of matrix representatives, $\widehat{U}_{i,j}(t)$ sends column-reduced matrices to column-reduced matrices), hence the standard coordinates of ${U}_{i_\ell,j_\ell}(t_\ell)\cdots {U}_{i_1,j_1}(t_1)V_n = \widehat{U}_{i_\ell,j_\ell}(t_\ell)\cdots \widehat{U}_{i_1,j_1}(t_1)V_n$ are obtained by applying these operators to the standard coordinates of $V_n$. Since the $\beta_{i_p,j_p}$ coordinate of $V_n$ is $0$,  since $\widehat{U}_{i_p,j_p}(t_p)$ is the only $\widehat{U}_{i_q,j_q}$ operator that changes the $\beta_{i_p,j_p}$ coordinate, and since $\widehat{U}_{i_p,j_p}(t_p) f_{i_p} = f_{i_p} + t_pf_{j_p}$, then the $\beta_{i_p,j_p}$ coordinate after applying all of these operators is $t_p$.
\end{proof}

\begin{lemma}\label{lem:EqualEntries}
Let $V\in C_{w}^{(n)}$ such that $N^tV\subseteq V$, and let $(i,j)$ be a free pair for $\alpha$ such that for all labels $r<i$ of $T$ in $[\alpha]$ we have $f_{r}\in V$. Then for any $m>0$, if $N^mf_i = f_{i'}$ and $N^mf_j = f_{j'}$, then the $\beta_{i,j}$ and $\beta_{i',j'}$ coordinates of $V$ are equal.
\end{lemma}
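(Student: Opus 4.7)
The plan is to exploit the hypothesis $N^t V \subseteq V$ by computing $(N^t)^m v_{i'}$ in two different ways and equating the coefficients of $f_j$. Here $v_{i'}$ denotes the basis vector of $V$ coming from the standard coordinate representation, with leading term $f_{i'}$; for this to make sense I will need $i'$ to lie in $[\alpha]$, and that together with $j' \notin [\alpha]$ and $j' < i'$ is a short bookkeeping check from the definition of free pair and the reverse reading order on $[\Lambda]$ (the key inputs being $i \in [\alpha] \setminus [\lambda]$ and that $j$ is the leftmost cell of $[\Lambda] \setminus [\alpha]$ in its row, while columnwise progression under $N$ preserves the relative order $i > j$).

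First I would upgrade the hypothesis into a rigidity statement on basis vectors. Because $v_r$ is the unique element of $V$ of the form $f_r + \sum_{h < r,\, h \notin [\alpha]} \beta_{r,h} f_h$, whenever $f_r \in V$ the uniqueness forces $v_r = f_r$. Thus the assumption that $f_r \in V$ for every label $r < i$ of $T$ in $[\alpha]$ gives $v_r = f_r$ for all such $r$.

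The first computation applies $(N^t)^m$ directly to the coordinate expansion
\[
v_{i'} \;=\; f_{i'} + \sum_{h < i',\, h \notin [\alpha]} \beta_{i',h}\, f_h.
\]
Since $N^m f_i = f_{i'}$ gives $(N^t)^m f_{i'} = f_i$, and since the unique $h$ with $(N^t)^m f_h = f_j$ is $h = j'$, one reads off that the coefficient of $f_j$ in $(N^t)^m v_{i'}$ is precisely $\beta_{i',j'}$. The second computation uses $(N^t)^m v_{i'} \in V$ to expand in the basis $\{v_r\}_{r \in [\alpha]}$ of $V$; matching leading terms and then invoking the first step gives
\[
(N^t)^m v_{i'} \;=\; v_i + \sum_{r < i,\, r \in [\alpha]} c_r v_r \;=\; v_i + \sum_{r < i,\, r \in [\alpha]} c_r f_r.
\]
The coefficient of $f_j$ on this right-hand side is $\beta_{i,j}$: the $v_i$-term contributes $\beta_{i,j}$ because $j < i$ and $j \notin [\alpha]$, while no $f_r$ with $r \in [\alpha]$ can equal $f_j$ since $j \notin [\alpha]$. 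Equating the two answers yields $\beta_{i,j} = \beta_{i',j'}$.

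The main obstacle I anticipate is the index bookkeeping needed up front — verifying $i' \in [\alpha]$, $j' \notin [\alpha]$, and $j' < i'$ so that $v_{i'}$ and $\beta_{i',j'}$ are meaningful — because after that the argument is a clean two-way expansion and coefficient match. Everything else (applying $(N^t)^m$ to a coordinate expansion, reexpressing a vector of $V$ in the standard basis) is purely formal given the Schubert cell structure described in Subsection~\ref{subsec:Flags}.
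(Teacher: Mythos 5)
Your proof is correct and is essentially the paper's argument, just packaged more cleanly. The paper forms the difference $v_{w^{-1}(i)} - (N^t)^m v_{w^{-1}(i')}\in V$, observes its $f_j$-coefficient is $\beta_{i,j}-\beta_{i',j'}$, and then runs a proof by contradiction that iteratively eliminates leading terms $f_\ell$ with $\ell\in[\alpha]$ and $\ell<i$ (using the hypothesis $f_\ell\in V$) until it produces a nonzero vector of $V$ with no leading term in $[\alpha]$. Your version replaces that iterative stripping with a single upper-triangular basis expansion of $(N^t)^m v_{w^{-1}(i')}$ in the $\{v_r\}$ basis, noting that $v_r=f_r$ for $r<i$ in $[\alpha]$ — which is precisely what the paper's elimination loop computes one step at a time. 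Both proofs share the same key inputs (the unipotent shift $(N^t)^m f_{i'}=f_i$, uniqueness of $j'$ with $(N^t)^m f_{j'}=f_j$, and the rigidity $f_r\in V\Rightarrow v_r=f_r$), and both leave the same minor bookkeeping (that $i'\in[\alpha]$, $j'\notin[\alpha]$, $j'<i'$ so the coordinate $\beta_{i',j'}$ is defined) implicit; your flag of this point is appropriate, and it is handled the same way the paper handles it, by the conditions under which the lemma is invoked in the proof of Lemma~\ref{lem:IsoSpringerProduct}.
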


\begin{proof}
Letting $v_q$ be the vectors for the standard coordinate representation of $V$, and letting $a_\ell = \beta_{i,\ell}$ and $b_{\ell'} = \beta_{i',\ell'}$ for notational convenience, we have
  \begin{align*}
    v_{w^{-1}(i)} &= f_i + a_j f_{j} + \sum_{j\neq \ell<i} a_\ell f_\ell,\\
    v_{w^{-1}(i')} &= f_{i'} + b_{j'} f_{j'} + \sum_{j'\neq \ell'<i'} b_{\ell'} f_{\ell'},\\
    (N^t)^m v_{w^{-1}(i')} &= f_i + b_{j'} f_j + \sum_{j'\neq \ell' < i'} b_{\ell'} ((N^t)^m f_{\ell'}).
  \end{align*}
  Since all of the above vectors are in $V$, then
  \begin{align}
    v_{w^{-1}(i)} - (N^t)^m v_{w^{-1}(i')} = (a_j-b_{j'}) f_j + \sum_{j\neq \ell<i} a_\ell f_\ell - \sum_{j'\neq \ell' < i'} b_{\ell'} ((N^t)^m f_{\ell'})\in V.\label{eq:DiffVect}
  \end{align}
  Observe that the $f_j$ coefficient of this vector is exactly $a_j-b_{j'}$, since the two sums contain no $f_j$ terms.

  Suppose by way of contradiction that $a_j\neq b_{j'}$. Then the vector~\eqref{eq:DiffVect} is nonzero, and since it is in $V$, its leading term must be a scalar multiple of $f_p$ for some $p$ a label in $[\alpha]$. Since $a_\ell = \beta_{i,\ell} = 0$ for all $\ell\in \{w(1),w(2),\dots, i\}$, then no nonzero terms in the first sum correspond to $\ell$ a label in $[\alpha]$. Therefore, the leading term must be $b_{\ell'}((N^t)^mf_{\ell'})$ for some $\ell'$ in the second sum. Let $\ell$ be such that $(N^t)^mf_{\ell'} = f_\ell$, and note that $\ell>j$ since it is the leading term. However, since $\ell'<i'$ by assumption, then $\ell<i$ (a property of reverse inversion reading order). Since we necessarily have that $\ell$ is a label in $[\alpha]$, then by the hypothesis in the statement of the lemma we have $f_\ell\in V$. Therefore, we can eliminate the $b_{\ell'}f_\ell = b_{\ell'}((N^t)^mf_{\ell'})$ term from \eqref{eq:DiffVect}. After repeating this process of eliminating terms from \eqref{eq:DiffVect}, we get a nonzero vector in $V$ whose leading term does not correspond to a label in $[\alpha]$, a contradiction. Therefore, we must have $a_j=b_{j'}$, which completes the proof.
\end{proof}

\begin{lemma}\label{lem:IsoSpringerProduct}
  Let $(i_1,j_1),(i_2,j_2),\dots, (i_\ell,j_\ell)$ be the free pairs for $\alpha$, listed so that $i_1\geq i_2\geq \cdots \geq i_\ell$. Then the map
  \[
    \bF^\ell \times Z^\mu_\alpha \to \widehat{Z}^\mu_\alpha
  \]
  defined by sending $(\vec{t},V_\bullet)$ to $U_{i_\ell,j_\ell}(t_\ell)\cdots U_{i_2,j_2}(t_2) U_{i_1,j_1}(t_1)V_\bullet$ is an isomorphism.
\end{lemma}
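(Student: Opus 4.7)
The plan is to prove that $\Psi$ is a bijection by checking injectivity and surjectivity separately. For injectivity, assume $\Psi(\vec{t}, V_\bullet) = \Psi(\vec{s}, V'_\bullet) = W_\bullet$; applying Lemma~\ref{lem:ChangingEntries} to each expression gives $t_p = \beta_{i_p, j_p}(W_n) = s_p$ for every $p$, so $\vec{t} = \vec{s}$, and since each $U_{i_p, j_p}(t)$ is invertible by Lemma~\ref{lem:UpperTri}, it follows that $V_\bullet = V'_\bullet$. (A short check using the definitions of free pair and reverse reading order shows the two cells of every free pair lie in different rows of $T$, so that $U_{i,j}(-t)$ is a two-sided inverse of $U_{i,j}(t)$.)

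For surjectivity, given $W_\bullet \in \widehat{Z}^\mu_\alpha$, I would set $t_p := \beta_{i_p, j_p}(W_n)$ and define
\[
V_\bullet \;:=\; U_{i_1, j_1}(-t_1)\, U_{i_2, j_2}(-t_2) \cdots U_{i_\ell, j_\ell}(-t_\ell)\, W_\bullet.
\]
The Springer conditions transfer to $V_\bullet$ since the $U$-operators commute with $N^t$, and $V_n$ remains in $C_w^{(n)}$ because unipotent upper-triangular operators preserve Schubert cells; once I show $V_n = \bF^\alpha$, the identity $\Psi(\vec{t}, V_\bullet) = W_\bullet$ follows. I would establish $V_n = \bF^\alpha$ by induction on labels $\ell \in [\alpha]$ in reverse reading order, showing $f_\ell \in V_n$ at each step. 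For $\ell \in [\lambda]$ this is immediate from $\bF^\lambda \subseteq V_n$. For $\ell \in [\alpha] \setminus [\lambda]$, uniqueness of the standard coordinate representation reduces the claim to the vanishing of $\beta_{\ell, h}(V_n)$ for every standard coordinate $(\ell, h)$. For each such $(\ell, h)$, I backward-shift to $(i_0, j_0)$, where $j_0$ is the leftmost cell of $[\Lambda] \setminus [\alpha]$ in the row of $h$ and $i_0$ is chosen so that $\ell = N^m f_{i_0}$ and $h = N^m f_{j_0}$. A column comparison using $h < \ell$ in reverse reading order shows $i_0 \in [\alpha]$ always exists, and that either $(i_0, j_0)$ is a free pair or $i_0 \in [\lambda]$. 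In the free-pair case, Lemma~\ref{lem:EqualEntries}---whose prefix hypothesis $f_r \in V_n$ for $r < i_0$ in $[\alpha]$ is supplied by the inductive hypothesis since $i_0 \leq \ell$---gives $\beta_{\ell, h}(V_n) = \beta_{i_0, j_0}(V_n)$, which vanishes by construction.

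The main obstacle I expect is the second sub-case, $i_0 \in [\lambda]$, where Lemma~\ref{lem:EqualEntries} does not directly apply. My approach is to adapt its leading-term argument: since $f_{i_0} \in \bF^\lambda \subseteq V_n$, uniqueness of the coordinate representation forces $v_{w^{-1}(i_0)} = f_{i_0}$, so
\[
(N^t)^m v_{w^{-1}(\ell)} - f_{i_0} \;=\; \sum_{h'} \beta_{\ell, h'}(V_n)\,(N^t)^m f_{h'}
\]
lies in $V_n$. A brief analysis shows every label $p = (N^t)^m f_{h'}$ lying in $[\alpha]$ satisfies $p < \ell$ in reverse reading order, so $f_p \in V_n$ by the inductive hypothesis, and one can iteratively subtract multiples of such $f_p$ to eliminate every term of the sum that is supported on $[\alpha]$. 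The resulting element of $V_n$ has support in $[\Lambda] \setminus [\alpha]$, which is disjoint from the pivots of $C_w^{(n)}$, and must therefore vanish; in particular, the $f_{j_0}$ coefficient $\beta_{\ell, h}(V_n)$ is zero, completing the induction.
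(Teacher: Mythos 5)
Your injectivity argument matches the paper's. For surjectivity, the framing (define $V_\bullet$ by applying the inverse operators and then show $V_n=\bF^\alpha$) is also correct, and your handling of the $i_0\in[\la]$ subcase is a reasonable adaptation of the leading-term manipulation used in Lemma~\ref{lem:EqualEntries}. However, there is a genuine gap in the free-pair subcase with $m=0$, i.e.\ when $(\ell,h)$ is itself a free pair $(i_p,j_p)$ and no backward shift occurs. There you assert $\beta_{i_p,j_p}(V_n)=0$ ``by construction,'' but this does not follow from setting $t_p:=\beta_{i_p,j_p}(W_n)$: the claim is that applying the product $U_{i_1,j_1}(-t_1)\cdots U_{i_\ell,j_\ell}(-t_\ell)$ to $W_n$ zeroes out that coordinate, which requires knowing that $U_{i_p,j_p}(-t_p)$ \emph{adds} $-t_p$ to the standard coordinate $\beta_{i_p,j_p}$ and that the remaining operators do not disturb it. Neither fact is automatic, because $U_{i_q,j_q}(-t_q)$ does not act directly on the column-reduced (standard coordinate) representation of a general flag --- it can introduce terms that must be re-reduced, changing other coordinates in the process. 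This is exactly why the paper introduces the $\widehat{U}$ operators (which do preserve column-reduced form), Lemma~\ref{lem:hat-equation} (which shows $U=\widehat{U}$ on suitably nice flags), and then proves the surjectivity via a \emph{reverse induction on the free pairs}, applying the operators one at a time from the largest $i$ downward and verifying at each step that enough coordinates of the intermediate flag $U_{i_p,j_p}(-t_p)\cdots U_{i_\ell,j_\ell}(-t_\ell)V_n$ already vanish that the next $\widehat{U}$ agrees with $U$ and behaves predictably. Your induction on labels $\ell\in[\alpha]$ in reverse reading order cannot appeal to its own inductive hypothesis here, since $i_0=\ell$ is exactly the label being treated; attempting to invoke Lemma~\ref{lem:ChangingEntries} in reverse is circular, because that lemma assumes the starting flag is already $\bF^\alpha$. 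To close the gap you would need to interleave a separate induction over the operators --- essentially reproducing the paper's Claim --- so the proposal as written is incomplete.
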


\begin{proof}
  First, observe that the map is well defined by Lemma~\ref{lem:UpperTri}. Second, by Lemma~\ref{lem:ChangingEntries} the $\beta_{i_m,j_m}$ coordinate of $U_{i_\ell,j_\ell}(t_\ell)\cdots U_{i_2,j_2}(t_2) U_{i_1,j_1}(t_1)V_\bullet$ is $t_m$ for each $m$. Therefore, any two pairs that map to the same partial flag must have the same $\vec{t}$ vector, and since the operators $U_{i,j}(t)$ are invertible, the map is injective.

  Finally, we show that the map is surjective. Let $V_\bullet\in \widehat{Z}^\mu_\alpha$, and let $t_m = \beta_{i_m,j_m}$ be the standard coordinate of $V_n$. We claim that $U_{i_1,j_1}^{-1}(t_1) U_{i_2,j_2}^{-1}(t_2)\cdots U_{i_\ell,j_\ell}^{-1}(t_\ell)V_\bullet \in Z^\mu_\alpha$. Observe that $U_{i,j}^{-1}(t) = U_{i,j}(-t)$. Thus, it is clear that this flag is in $\widehat{Z}^\mu_\alpha$, so it suffices to prove that
  \[
    U_{i_1,j_1}(-t_\ell)U_{i_2,j_2}(-t_2)\cdots U_{i_\ell,j_\ell}(-t_\ell)V_n = \bF^\alpha.
  \]
  This follows from the next claim.

  \medskip

  \textbf{Claim: }Letting $\beta_{*,*}$ be the standard coordinates of $U_{i_p,j_p}(-t_p)\cdots U_{i_\ell,j_\ell}(-t_\ell)V_n$, for all $1\leq p\leq q\leq \ell$ we have $\beta_{i_q,j_q}=0$, and for all pairs $(i_q,j)$ that are not free pairs we have $\beta_{i_q,j}=0$.

  \medskip

  We prove the claim by reverse induction on $p$. In the base case when $p=q=\ell$, then for all $r<i_\ell$ such that $r$ is a label of $T$ in $[\alpha]$, then $f_r\in V_n$, so we have $U_{i_\ell,j_\ell}(-t_\ell) V_n = \widehat{U}_{i_\ell,j_\ell}(-t_\ell)V_n$. Since the standard coordinates of $\widehat{U}_{i_\ell,j_\ell}(-t_\ell)V_n$ are obtained by applying $\widehat{U}_{i_\ell,j_\ell}(-t_\ell)$ to the standard coordinate representation of $V_n$, then $\widehat{U}_{i_\ell,j_\ell}(-t_\ell)$ eliminates the $\beta_{i_\ell,j_\ell}$ coordinate by construction, so the first part of the claim holds in the base case.

  If there is some pair $(i_\ell,j)$ with $j<i_\ell$ a label of $T$ in $[\Lambda]/[\alpha]$ that is not a free pair, let $j'$ be the label of the leftmost cell in the same row as $j$ that is not in $[\alpha]$ (which is distinct from $j$ by assumption). Let $i'$ be the label of the cell that is in the same row as $i_\ell$ and the same column as $j'$. Then $i'$ is not in $[\la]$ because $j'$ is not in $[\la]$, so $(i',j')$ is a free pair with $i'<i_\ell$, contradicting the fact the minimality of $i_\ell$. Therefore, the only pairs $(i_\ell,j)$ that are not free pairs are ones such that $j$ is a label of $[\alpha]$, so $\beta_{i_\ell,j} = 0$, and the base case is complete.

  In the inductive step, suppose the statement of the claim holds for $p$. Then by the inductive hypothesis, for all $r<i_{p-1}$ such that $r$ is a label of $T$ in $[\alpha]$ (note, this may exclude $i_p$ if $i_p=i_{p-1}$), then all $\beta_{r,j}$ coordinates of $U_{i_p,j_p}(-t_p)\cdots U_{i_\ell,j_\ell}(-t_\ell)V_n$ are $0$, so $f_r \in U_{i_p,j_p}(-t_p)\cdots U_{i_\ell,j_\ell}(-t_\ell)V_n$. Therefore,
  \[
    U_{i_{p-1},j_{p-1}}(-t_{p-1})U_{i_p,j_p}(-t_p) \cdots U_{i_\ell,j_\ell}(-t_\ell) V_n = \widehat{U}_{i_{p-1},j_{p-1}}(-t_{p-1})U_{i_p,j_p}(-t_p) \cdots U_{i_\ell,j_\ell}(-t_\ell)V_n.
  \]
  By construction, $\widehat{U}_{i_{p-1},j_{p-1}}(-t_{p-1})$ eliminates the $\beta_{i_{p-1},j_{p-1}}$ coordinate and does not change the coordinates guaranteed to be $0$ in the inductive hypothesis. Given a pair $(i_{p-1},j)$ such that $j$ is a label of $T$ not in $[\alpha]$, then there exist $m$ and $q>p-1$ such that $(N^t)^mf_{i_{p-1}} = f_{i_q}$ and $(N^t)^m f_{j_{p-1}} = f_{j_q}$. Note this implies $N^m f_{i_q} = f_{i_{p-1}}$ and $N^m f_{j_q} = f_{j_{p-1}}$. Since
  \[
    f_r \in U_{i_{p-1},j_{p-1}}(-t_{p-1})U_{i_p,j_p}(-t_p)\cdots U_{i_\ell,j_\ell}(-t_\ell)V_n
  \]
  for all $r\leq i_q$ such that $r$ is a label in $[\alpha]$, then by Lemma~\ref{lem:EqualEntries}, the $\beta_{i_q,j_q}$ and $\beta_{i_{p-1},j}$ coordinates of this flag are equal, and thus are both $0$. This completes the induction and the proof of the claim.

By the Claim, we have for $p=1$ that $U_{i_1,j_1}(-t_1)U_{i_2,j_2}(-t_2)\cdots U_{i_\ell,j_\ell}(-t_\ell)V_\bullet\in Z^\mu_\alpha$, so the map in the statement of the lemma is surjective. In the case when $\bF = \bC$, since the inverse is a regular map, it is an isomorphism of varieties.
\end{proof}

\begin{proof}[Proof of Theorem~\ref{thm:main-iso}]
    The isomorphism $\widehat{Z}^\mu_\alpha\cong \bF^\ell \times Z^\mu_\alpha$ follows by combining Lemma~\ref{lem:number-free-pairs} and Lemma~\ref{lem:IsoSpringerProduct}.
  \end{proof}

  \section{Proof of Theorem~\ref{thm:HL_exp_rev}}\label{sec:ProofMainThm}

  In this section, we use the results from Sections~\ref{sec:FqCounting} and \ref{sec:SpringerDecomp} to prove the main theorem, Theorem~\ref{thm:HL_exp_rev}. Throughout this section, we utilize the notation $\nu_0' \coloneqq s$ for $\nu\in \Par(n,s)$. Recall our convention that we consider $H^{2k}(Y_{n,\la,s};\bQ)$ to be the degree $k$ component of the cohomology ring.
  
  \begin{lemma}\label{lem:coinv-lemma}
    Let $\nu\in \Par(n,s)$ such that $\la\subseteq \nu$. Then
    \begin{align}\label{eq:CoinvSum}
      \sum_{\substack{\alpha\in \Comp(n,s),\\ \alpha\supseteq \la,\\ \sort(\alpha)=\nu}} q^{\coinv(\alpha)} = \prod_{i\geq 0} \qbinom{\nu_i' - \la_{i+1}'}{\nu_i' - \nu_{i+1}'}_q.
    \end{align}
  \end{lemma}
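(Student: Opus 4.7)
The plan is to interpret a composition $\alpha$ with $\sort(\alpha)=\nu$ and $\alpha\supseteq\lambda$ as a nested chain of ``column sets,'' and then show the $\coinv$ generating function factorizes column by column in a way that directly produces the product of $q$-binomials. Concretely, for $i\geq 0$ set
\[
A_i \coloneqq \{j\in[s] : \alpha_j \geq i\}, \qquad L_i \coloneqq \{j\in[s] : \lambda_j \geq i\},
\]
so that $A_0=[s]$, $|A_i|=\nu_i'$, $|L_i|=\la_i'$, and the sequence $A_0\supseteq A_1\supseteq A_2\supseteq\cdots$ together with the constraint $L_i\subseteq A_i$ for all $i\geq 1$ recovers $\alpha$ uniquely. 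The key structural input from the hypotheses is that, because $\lambda$ is a partition, $L_i$ is a \emph{prefix}: $L_i=\{1,2,\ldots,\la_i'\}$.

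Next, I would decompose the coinversion statistic by assigning each coinversion $(j,k)$ (with $j<k$ and $\alpha_j<\alpha_k$) to the column $i=\alpha_j+1$. Writing $B_i \coloneqq A_{i-1}\setminus A_i$, this gives
\[
\coinv(\alpha) \;=\; \sum_{i\geq 1} d_i, \qquad d_i \coloneqq \#\{(j,k) : j<k,\ j\in B_i,\ k\in A_i\},
\]
since the charge of $(j,k)$ to its assigned column correctly records $j\in B_i$ and $k\in A_i$.

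The central observation, and the main obstacle to verify, is that $d_i$ depends only on the choice of $B_i$ inside $A_{i-1}\setminus L_i$ (not on $L_i$ itself). Because $B_i\subseteq A_{i-1}\setminus L_i$ and $L_i=\{1,\ldots,\la_i'\}$ is a prefix, every $j\in B_i$ satisfies $j>\la_i'$, so no pair $(j,k)$ with $j\in B_i$ and $k\in L_i$ can have $j<k$. Consequently
\[
d_i \;=\; \#\{(j,k) : j<k,\ j\in B_i,\ k\in (A_{i-1}\setminus L_i)\setminus B_i\},
\]
which is exactly the standard inversion statistic on a $(\nu_{i-1}'-\nu_i')$-subset of a linearly ordered set of size $\nu_{i-1}'-\la_i'$. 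Its generating function is therefore $\qbinom{\nu_{i-1}'-\la_i'}{\nu_{i-1}'-\nu_i'}_q$, and this value depends on $A_{i-1}$ only through its cardinality $\nu_{i-1}'$.

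Finally, since at each column the $q$-weighted count of valid $B_i$ is independent of the particular $A_{i-1}$ produced by prior choices, summing over all admissible chains and reindexing $i\mapsto i+1$ yields
\[
\sum_{\substack{\alpha\in\Comp(n,s),\\ \alpha\supseteq\la,\\ \sort(\alpha)=\nu}} q^{\coinv(\alpha)} \;=\; \prod_{i\geq 1} \qbinom{\nu_{i-1}'-\la_i'}{\nu_{i-1}'-\nu_i'}_q \;=\; \prod_{i\geq 0} \qbinom{\nu_i'-\la_{i+1}'}{\nu_i'-\nu_{i+1}'}_q,
\]
as desired. The only real content is the prefix lemma making $L_i$ invisible to $d_i$; once that is in hand, the factorization and the standard $q$-binomial interpretation finish the proof.
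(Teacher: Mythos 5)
Your proof is correct. It carries out essentially the same column-by-column decomposition of $\coinv$ that the paper uses, but packaged as a direct product factorization rather than as an induction on $\nu_1$. In the paper, one peels off the last column of $[\nu]$ and $[\lambda]$ and observes that $\coinv(\alpha)-\coinv(\overline{\alpha})$ equals the number of pairs $i<j$ with $\alpha_i=m-1$ and $\alpha_j=m$; unwinding that induction assigns each coinversion $(j,k)$ with $\alpha_j<\alpha_k$ to column $\alpha_j+1$, exactly as your map to $d_{\alpha_j+1}$ does. Your key ``prefix lemma'' (that $B_i\subseteq A_{i-1}\setminus L_i$ forces $j>\la_i'$ for all $j\in B_i$, so pairs with $k\in L_i$ never contribute to $d_i$) is precisely the same observation the paper makes when noting that the first $\la_m'$ rows lie to the left of everything in $S$ and hence create no new coinversions. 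What your presentation buys is conceptual clarity: the factorization $\sum_\alpha q^{\coinv(\alpha)}=\prod_i\qbinom{\nu_{i-1}'-\la_i'}{\nu_{i-1}'-\nu_i'}_q$ is visible immediately because the $q$-weighted choice of $B_i$ depends on $A_{i-1}$ only through its cardinality, whereas the paper's inductive argument requires a separate base case and re-verifies the invariant at each step. Both are valid; yours is arguably easier to internalize once one trusts the standard $q$-binomial interpretation of the subset coinversion statistic.
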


  \begin{proof}
    We proceed by induction on $m=\nu_1$. In the base case $m=\nu_1=1$, then $\la = (1^k)$ and $\nu = (1^n)$. In this case, the left-hand side can be rewritten as
    \[
      \sum_{\alpha \in \Comp(n-k,s-k)} q^{\coinv(\alpha)}=\qbinom{s-k}{n-k}_q = \qbinom{s-k}{s-n}_q = \qbinom{\nu_0'-\la_1'}{\nu_0'-\nu_1'}_q,
    \]
    which equals the right-hand side, so the base case holds.

    Let $m>1$, and assume by way of induction that the identity holds for all $\nu$ with $\nu_1=m-1$, and let $\nu\in \Par(n,s)$ such that $\la\subseteq \nu$. Define $\overline{\nu}$ and $\overline{\la}$ by removing all cells in column $m$ of the Young diagrams of $\nu$ and $\la$, respectively. Then $\alpha\in \Comp(n,s)$ such that $\la\subseteq \alpha$ and $\sort(\alpha) = \nu$ is obtained uniquely by the following process:
    \begin{itemize}
    \item Start with $\overline{\alpha} \in \Comp(n-\nu_m',s)$ such that $\overline{\la}\subseteq \overline{\alpha}$ and $\sort(\overline{\alpha}) = \overline{\nu}$,
    \item Append one cell to the first $\la_m'$ rows of $[\overline{\alpha}]$,
    \item Append one cell to $(\nu_m'-\la_m')$ many of the remaining rows of $[\overline{\alpha}]$ that have length $m-1$.
    \end{itemize}
    There are $(\nu_{m-1}'-\la_m')$ many rows of $[\overline{\alpha}]$ with length $m-1$ (apart from the first $\la_m'$ rows). Furthermore, the difference $\coinv(\alpha)-\coinv(\overline{\alpha})$ is the number of pairs $i<j$ such that $\alpha_i= m-1$ and $\alpha_j=m$. For a fixed $\overline{\alpha}$, the total contribution to the left-hand side of~\eqref{eq:CoinvSum} is thus
    \[
      q^{\coinv(\overline{\alpha})} \qbinom{\nu_{m-1}'-\la_m'}{\nu_m'-\la_m'}_q = q^{\coinv(\overline{\alpha})} \qbinom{\nu_{m-1}'-\la_m'}{\nu_{m-1}'-\nu_m'}_q,
    \]
    which is $q^{\coinv(\overline{\alpha})}$ times the $i=m-1$ factor on the right-hand side of \eqref{eq:CoinvSum}. An application of the inductive hypothesis then completes the proof.
  \end{proof}

\begin{theorem}\label{thm:HL-formula}
  We have
  \begin{align}\label{eq:HL-formula}
    \Frob(H^*(Y_{n,\la,s};\bQ);q) = \sum_{\substack{\nu\in \Par(n,s),\\\nu\supseteq \la}} q^{\sum_i (s-\nu_i')(\nu_{i+1}'-\la_{i+1}')}\prod_{i\geq 0} \qbinom{\nu_i'-\la_{i+1}'}{\nu_i'-\nu_{i+1}'}_q \widetilde{H}_\nu(\bx;q).
  \end{align}
\end{theorem}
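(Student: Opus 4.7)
The plan is to combine the three pillars the paper has already built: the $\bF_q$-point interpretation of the Frobenius characteristic (Theorem~\ref{thm:Delta-Springer-point-count}), the Springer fiber decomposition of $Y_{n,\la,s}^\mu$ (Theorem~\ref{thm:main-iso}), and the classical point-count expansion of the modified Hall--Littlewood symmetric functions \eqref{eq:Springer-point-count}. Concretely, I would start from
\[
  \Frob(H^*(Y_{n,\la,s};\bQ);q) = \sum_{\mu\vdash n}|Y_{n,\la,s}^\mu(\Fq)|\, m_\mu(\bx)
\]
and then evaluate $|Y_{n,\la,s}^\mu(\Fq)|$ by the Springer decomposition.

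First I would observe that the Schubert cell decomposition of the Grassmannian $\cB^{(n,K-n)}$ cuts the top part $V_n$ of any flag in $Y_{n,\la,s}^\mu$ into a unique Schubert cell $C_w$ indexed by an $(n)$-increasing map $w$, which in turn corresponds to a unique composition $\alpha\in\Comp(n,s)$. The admissibility condition forces $\alpha\supseteq\la$, so
\[
  Y_{n,\la,s}^\mu \;=\; \bigsqcup_{\substack{\alpha\in\Comp(n,s)\\ \alpha\supseteq\la}} \widehat Z^\mu_\alpha.
\]
By Theorem~\ref{thm:main-iso}, each piece satisfies $|\widehat Z^\mu_\alpha(\Fq)| = q^{\sum_i(s-\alpha_i')(\alpha_{i+1}'-\la_{i+1}')+\coinv(\alpha)}\,|\Stein_{\sort(\alpha)}^\mu(\Fq)|$, since $Z_\alpha^\mu\cong \Stein_{\sort(\alpha)}^\mu$.

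Next I would group compositions by their sort. The crucial point is that $\alpha_i'=\sort(\alpha)_i'=\nu_i'$ depends only on $\nu\coloneqq\sort(\alpha)$, so the whole factor $q^{\sum_i(s-\nu_i')(\nu_{i+1}'-\la_{i+1}')}$ pulls out of the inner sum. A brief check confirms that if $\la$ is a partition and $\alpha\supseteq\la$ componentwise then $\sort(\alpha)\supseteq\la$, so the range $\nu\in\Par(n,s)$, $\nu\supseteq\la$ is correct. Applying Lemma~\ref{lem:coinv-lemma} to collect the $q^{\coinv(\alpha)}$ contributions yields
\[
  |Y_{n,\la,s}^\mu(\Fq)| \;=\; \sum_{\substack{\nu\in\Par(n,s)\\ \nu\supseteq\la}} q^{\sum_i(s-\nu_i')(\nu_{i+1}'-\la_{i+1}')}\prod_{i\ge 0}\qbinom{\nu_i'-\la_{i+1}'}{\nu_i'-\nu_{i+1}'}_q\,|\Stein_\nu^\mu(\Fq)|.
\]

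Finally, summing against $m_\mu(\bx)$ and invoking the Steinberg/Hall--Littlewood identity \eqref{eq:Springer-point-count}, namely $\sum_{\mu\vdash n}|\Stein_\nu^\mu(\Fq)|\,m_\mu(\bx)=\widetilde H_\nu(\bx;q)$, produces the right-hand side of \eqref{eq:HL-formula}. I do not expect any truly hard step: the geometric reduction to Steinberg varieties was already done in Section~\ref{sec:SpringerDecomp}, and Lemma~\ref{lem:coinv-lemma} handles the combinatorics. The only delicate verification is bookkeeping: ensuring that the exponent $\sum_i(s-\alpha_i')(\alpha_{i+1}'-\la_{i+1}')$ is invariant under sorting (so that it can be pulled out as a function of $\nu$ alone) and that the range of summation is correctly transported between compositions and partitions. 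Once those are confirmed, the identity drops out by chaining the three equalities.
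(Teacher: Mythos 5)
Your proposal is correct and follows essentially the same route as the paper: start from Theorem~\ref{thm:Delta-Springer-point-count}, decompose $Y^\mu_{n,\la,s}$ into the pieces $\widehat{Z}^\mu_\alpha$, apply Theorem~\ref{thm:main-iso} together with $Z^\mu_\alpha\cong\Stein^\mu_{\sort(\alpha)}$ and \eqref{eq:Springer-point-count}, and then collect compositions by their sort using Lemma~\ref{lem:coinv-lemma}. The only cosmetic difference is that you regroup by $\nu$ before summing against $m_\mu(\bx)$, whereas the paper interchanges sums first; the bookkeeping you flag (that $\alpha_i'$ depends only on $\sort(\alpha)$) is indeed the one small point to check, and it holds.
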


\begin{proof}
  Combining \eqref{eq:Springer-point-count}, Theorem~\ref{thm:Delta-Springer-point-count}, Theorem~\ref{thm:main-iso}, and Lemma~\ref{lem:coinv-lemma}, we have for all prime powers $q$ the following string of identities.
  \begin{align}
    \Frob(H^*(Y_{n,\la,s};\bQ);q)  &= \sum_{\mu\vdash n} |Y^\mu_{n,\la,s}(\Fq)| m_\mu(\bx)\\
                                   &= \sum_{\mu\vdash n} \sum_{\substack{\alpha\in \Comp(n,s),\\\alpha\supseteq \la}} |\widehat{Z}^\mu_\alpha (\Fq)| m_\mu(\bx)\\
                                   &= \sum_{\mu\vdash n} \sum_{\substack{\alpha\in \Comp(n,s),\\\alpha\supseteq \la}} q^{\sum_i (s-\alpha_i')(\alpha_{i+1}' - \la_{i+1}') + \coinv(\alpha)} |Z^\mu_\alpha(\Fq)| m_\mu(\bx)\\
                                   &= \sum_{\substack{\alpha\in \Comp(n,s),\\\alpha\supseteq \la}} q^{\sum_i (s-\alpha_i')(\alpha_{i+1}' - \la_{i+1}') + \coinv(\alpha)} \left(\sum_{\mu\vdash n} |Z^\mu_\alpha (\Fq)| m_\mu(\bx)\right)\\
                                   &= \sum_{\substack{\alpha\in \Comp(n,s),\\\alpha\supseteq \la}} q^{\sum_i (s-\alpha_i')(\alpha_{i+1}' - \la_{i+1}') + \coinv(\alpha)} \widetilde{H}_{\sort(\alpha)}(\bx;q)\\
                                   &= \sum_{\substack{\nu\in \Par(n,s),\\\nu\supseteq \la}} q^{\sum_i (s-\alpha_i')(\alpha_{i+1}' - \la_{i+1}')} \prod_{i\geq 0}  \qbinom{\nu_i'-\la_{i+1}'}{\nu_i'-\nu_{i+1}'}_q \widetilde{H}_{\nu}(\bx;q).
  \end{align}
Since the identities hold for infinitely many values of $q$, the final identity holds for $q$ a formal parameter.
\end{proof}

\begin{corollary}\label{cor:rev-HL-formula}
  We have
  \begin{align}\label{eq:rev-HL-formula2}
    \Frob(H^*(Y_{n,\la,s};\bQ);q) = \rev_q\left[ \sum_{\substack{\nu\in \Par(n,s),\\ \nu\supseteq \lambda}} q^{n(\nu/\lambda)} \prod_{i\geq 0} \qbinom{\nu_i'-\la_{i+1}'}{\nu_i'-\nu_{i+1}'}_q H_\nu(\bx;q)\right].
  \end{align}
\end{corollary}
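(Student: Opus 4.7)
The plan is to deduce Corollary~\ref{cor:rev-HL-formula} from Theorem~\ref{thm:HL-formula} by applying $\rev_q$ and simplifying, with the core reducing to a single scalar identity per partition. Since $\Frob(H^*(Y_{n,\la,s};\bQ);q)$ is a polynomial in $q$ of top degree exactly $d = n(\la) + (s-1)(n-k)$ (the complex dimension of $Y_{n,\la,s}$), and $\rev_q$ is an involution on polynomials of matched top degree, it suffices to show
\[
  \rev_q[\Frob(H^*(Y_{n,\la,s};\bQ);q)] = \sum_{\substack{\nu\in\Par(n,s)\\ \nu\supseteq\la}} q^{n(\nu/\la)} \prod_{i\ge 0}\qbinom{\nu_i'-\la_{i+1}'}{\nu_i'-\nu_{i+1}'}_q H_\nu(\bx;q).
\]

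To compute the left-hand side, I would substitute the formula from Theorem~\ref{thm:HL-formula}, then apply the two identities $\widetilde H_\nu(\bx;q^{-1}) = q^{-n(\nu)} H_\nu(\bx;q)$ (which follows from the defining relation $\widetilde H_\nu(\bx;q) = q^{n(\nu)}H_\nu(\bx;q^{-1})$) and $\qbinom{a}{b}_{q^{-1}} = q^{-b(a-b)}\qbinom{a}{b}_q$ (a standard $q$-binomial identity). After collecting all the powers of $q$ and invoking linear independence of the $H_\nu(\bx;q)$ in $\Lambda_{\bQ(q)}$, the desired equality reduces to checking, for every $\nu\in\Par(n,s)$ with $\nu\supseteq\la$, the scalar identity
\[
  d = n(\nu/\la) + n(\nu) + \sum_{i\ge 0}(s-\nu_{i+1}')(\nu_{i+1}'-\la_{i+1}').
\]

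Using $n(\la) = \sum_i\binom{\la_i'}{2}$, $n(\nu) = \sum_i\binom{\nu_i'}{2}$, $n(\nu/\la) = \sum_i\binom{\nu_i'-\la_i'}{2}$, and $n-k = \sum_i(\nu_i'-\la_i')$, this identity splits column by column into the one-variable equality
\[
  \binom{a-b}{2} + \binom{a}{2} - \binom{b}{2} = (a-b)(a-1) \quad\text{for } a\ge b\ge 0,
\]
which is a short expansion (both sides equal $(a-b)(a-1)$ after clearing denominators).

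The one bookkeeping point that needs care is verifying that the expression on the right-hand side of the corollary, inside $\rev_q$, has top $q$-degree equal to $d$, so that the paper's definition of $\rev_q$ (which depends on the actual top degree) gives the expected involution. The same scalar identity above shows that each $\nu$-term has $q$-degree equal to $d - \sum_i(s-\nu_i')(\nu_{i+1}'-\la_{i+1}')\le d$, and equality is attained for any $\nu$ whose column lengths satisfy $\nu_{i+1}' = \la_{i+1}'$ whenever $\nu_i' < s$; such a $\nu$ exists by a greedy column-filling construction from $\la$. Beyond this, the argument is routine algebra, and I anticipate no significant obstacle.
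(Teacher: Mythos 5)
Your proof is correct and follows essentially the same route as the paper: apply $\rev_q$ to Theorem~\ref{thm:HL-formula}, use the identities $\widetilde H_\nu(\bx;q^{-1}) = q^{-n(\nu)}H_\nu(\bx;q)$ and $\qbinom{a}{b}_{q^{-1}} = q^{-b(a-b)}\qbinom{a}{b}_q$, and reduce to the same scalar exponent identity, which you correctly verify column-by-column. Your explicit degree check on the expression inside $\rev_q$ is a welcome extra level of care that the paper leaves implicit, and you have the $q$-binomial reversal identity with the correct sign (the paper's proof has a sign typo there that does not affect the argument).
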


\begin{proof}
  By \cite[Theorem 1.2]{GLW}, the degree of $\Frob(H^*(Y_{n,\la,s};\bQ);q)$ is $n(\la)+(s-1)(n-k)$. By Theorem~\ref{thm:HL-formula}, it suffices to show that $\rev_q$ applied to the right-hand side of \eqref{eq:HL-formula} is equal to the expression in the right-hand side of \eqref{eq:rev-HL-formula2} inside of $\rev_q$. In other words, it suffices to prove that
  \begin{gather}
    q^{n(\la)+(s-1)(n-k)}\sum_{\substack{\nu\in \Par(n,s),\\\nu\supseteq \la}} q^{-\sum_i (s-\nu_i')(\nu_{i+1}'-\la_{i+1}')}\prod_{i\geq 0} \qbinom{\nu_i'-\la_{i+1}'}{\nu_i'-\nu_{i+1}'}_{q^{-1}} \widetilde{H}_\nu(\bx;q^{-1})
    =\\
    \sum_{\substack{\nu\in \mathrm{Par(n,s)},\\ \nu\supseteq \lambda}} q^{n(\nu/\lambda)} \prod_{i\geq 0} \qbinom{\nu_i'-\la_{i+1}'}{\nu_i'-\nu_{i+1}'}_q H_\nu(\bx;q).
  \end{gather}
  Since $\widetilde{H}_\nu(\bx;q^{-1}) =q^{-n(\nu)}H_\nu(\bx;q)$ and $\qbinom{a}{b}_{q^{-1}} = q^{b(a-b)}\qbinom{a}{b}_q$, we are reduced to proving that
  \begin{align}
    n(\la) + (s-1)(n-k) -\sum_i (s-\nu_i')(\nu_{i+1}'-\la_{i+1}') - \sum_i(\nu_i'-\nu_{i+1}')(\nu_{i+1}'-\la_{i+1}') -n(\nu) = n(\nu/\la),
  \end{align}
  which follows by a straightforward calculation using the facts that $\sum_i \nu_i' = n$ and $\sum_i\la_i' = k$. The proof is thus complete.
\end{proof}

\section{Further directions}

We list a few open problems and possible further connections with geometry and combinatorics:

\begin{itemize}
  \item Counting partial flags also has connections with the $q$-Burge correspondence by work of Karp and Thomas~\cite{KarpThomas}. Is there a generalization of the $q$-Burge correspondence that corresponds to counting pairs of points of the $\Delta$-Springer varieties?

\item The proof of Theorem~\ref{thm:HL_exp_rev} given in this article is geometric, relying on counting the points of the $\Delta$-Springer variety over $\bF_q$ using different ways to partition the space. A more direct combinatorial proof should be possible starting from Theorem~\ref{thm:RInvFormula}. Such a combinatorial proof would find a weight-preserving bijection that collects terms of this formula into sums that can be identified as monomial expansions of Hall-Littlewood symmetric functions. 

\item Formulas for Macdonald polynomials, which are two-parameter generalizations of Hall-Littlewood symmetric functions, have been obtained by Mellit~\cite{Mellit} as weighted sums over points of affine Springer fibers. It may be possible to define generalizations of $\Delta$-Springer varieties in the setting of affine flag varieties. One can then ask whether weighted point counts over these varieties yield symmetric functions that can simultaneously be defined using Macdonald eigenoperators. In particular, since $\Delta'_{e_{k-1}}e_n|_{t=0}$ is, up to a minor twist, the graded Frobenius characteristic of the cohomology of $Y_{n,(1^k),k}$, one might hope that extending the definition of $Y_{n,(1^k),k}$ to the affine setting and taking a weighted point count can recover the full symmetric function $\Delta'_{e_{k-1}}e_n$.
\end{itemize}

\section{Acknowledgements}

The author would like to thank Jake Levinson and Alexander Woo for many helpful discussions.

\bibliographystyle{hsiam}
\bibliography{Springer}

\end{document}